\documentclass[final,12pt]{elsarticle}




\usepackage{amssymb}
\usepackage{amsmath}

\usepackage{amsfonts}
\usepackage{bm}
\usepackage{mathrsfs}

\usepackage{verbatim} 
\usepackage{booktabs}
\usepackage{multirow}
\usepackage{subfig}
\usepackage{placeins}

\usepackage{xcolor}
\usepackage[normalem]{ulem}

\usepackage{algorithm}
\usepackage{algpseudocode}

\usepackage{epstopdf}
\usepackage{cases}
\usepackage{lineno}

\usepackage{amsthm}


\newtheorem{theorem}{Theorem}
\newtheorem{lemma}[theorem]{Lemma}

\newtheorem{remark}[theorem]{Remark}     
\usepackage[colorlinks,linkcolor=blue,citecolor=blue,urlcolor=blue]{hyperref}

\biboptions{sort&compress}


\begin{document}

\begin{frontmatter}



\title{A Single-Loop Bilevel Deep Learning Method  for Optimal Control of Obstacle Problems}



\affiliation[inst4]{organization={Pengcheng Laboratory},
            city={Shenzhen},
            postcode={518000}, 
            state={Guangdong},
            country={China}}

\affiliation[inst1]{organization={Division of Mathematical Sciences, School of Physical and Mathematical Sciences, Nanyang Technological University},
            addressline={21
Nanyang Link}, 
            postcode={637371}, 
            country={Singapore}}

\affiliation[inst2]{organization={National Center for Applied Mathematics~Shenzhen},
            addressline={Shenzhen,~518000, Guangdong},
            country={China}}

\affiliation[inst3]{organization={Department of Mathematics, Southern University of Science and Technology},
            city={Shenzhen},
            postcode={518055}, 
            state={Guangdong},
            country={China}}

\author[inst1]{Yongcun Song\fnref{equal}}
\author[inst2,inst3]{Shangzhi Zeng}
\author[inst2,inst3]{Jin Zhang}
\author[inst4,inst3]{Lvgang Zhang\corref{cor1}}
\ead{12443001@mail.sustech.edu.cn}
\cortext[cor1]{Corresponding author.}

\fntext[equal]{These authors contributed equally and are listed alphabetically.}
\begin{abstract}
Optimal control of obstacle problems arises in a wide range of applications and are computationally challenging due to their nonsmoothness, nonlinearity, and bilevel structure. Classical numerical methods typically rely on mesh-based discretizations and require solving a sequence of computationally expensive subproblems, while deep learning approaches for this class of problems remain largely unexplored.
In this work, we propose a single-loop bilevel deep learning framework. The proposed method is mesh-free, scalable to high-dimensional settings and complex domains, and avoids the repeated solution of discretized subproblems. It employs constraint-embedding neural networks to approximate both the state and the control while preserving the intrinsic bilevel structure of the problem. To enable efficient training, we develop a Single-Loop Stochastic First-Order Bilevel Algorithm (S2-FOBA), which eliminates nested optimization and does not rely on restrictive lower-level uniqueness assumptions. We further analyze the convergence behavior of S2-FOBA under mild conditions. Numerical experiments on benchmark problems, including two-dimensional examples with regular and irregular obstacles, a five-dimensional example, and a problem posed on a complex domain, demonstrate the effectiveness and efficiency of the proposed approach.
\end{abstract}

\begin{keyword}


optimal control \sep obstacle problems \sep deep learning \sep bilevel optimization \sep convergence analysis
\end{keyword}

\end{frontmatter}


\section{Introduction}
As a fundamental class of nonsmooth nonlinear problems, obstacle problems arise in diverse applications where some obstacle constraints must be satisfied. Examples include elastic-plastic torsion problems \cite{glowinski2015variational}, elastic membrane deformation \cite{glowinski2008lectures}, lubrication phenomena \cite{cryer1971method}, porous media filtration \cite{kinderlehrer2000introduction}, wake problems \cite{bourgat1977numerical}, irrotational flows of perfect fluids \cite{brezis1976hodograph},  and American option pricing  \cite{jaillet1990variational}.  
In addition to the numerical simulation of obstacle problems, it is often of interest to control them in order to achieve specific objectives. Consequently, optimal control of obstacle problems arises in various fields, where a control is introduced to steer the system toward a prescribed target \cite{barbu1984optimal}. Such optimal control problems capture important applications in a wide range of areas and have been intensively studied in \cite{barbu1984optimal,bergounioux1997use,bergounioux2004optimal,hintermuller2008active,hintermuller2014dual,hintermuller2011smooth,hintermuller2015ell,hintermuller2016bundle,ito2007optimal,meyer2015adaptive,meyer2013priori,mignot1976controle,mignot1984optimal,schiela2013convergence,surowiec2018numerical}.

Optimal control of obstacle problems presents significant challenges from both theoretical and algorithmic perspectives due to the inherent nonconvexity and nondifferentiability. For example, the nondifferentiability of the control-to-state operator complicates the full gradient computation, and some tools from variational analysis and approximation theory are required to address this difficulty \cite{mignot1976controle,surowiec2018numerical}. 
In addition, the obstacle constraint is nonsmooth and gives rise to a free boundary across which the solution typically lacks regularity. Moreover, this free boundary may not align with mesh grids in finite difference (FDM) or finite element (FEM) methods. Consequently, numerical solutions may suffer from significant errors, particularly in the vicinity of the free boundary; see, e.g., \cite{de2018optimal,friedman2010variational}.

From a computational perspective, these challenges highlight the need to develop scalable algorithms that avoid repeatedly solving large, ill-conditioned algebraic systems arising from discretization and that remain robust across different problem settings. In general, the development of efficient algorithms requires careful consideration of the specific structure and features of the problem under investigation.

\subsection{Model}
We consider the optimal control of obstacle problems that can be modeled as
\begin{equation} \label{general control}
\min _{y \in Y, u \in U} J(y, u) \quad \text { s.t. } \quad y=\arg \min _{y' \in Y_{a d}} \mathscr{E}(y', u), \quad u \in U_{a d}.
\end{equation}
Above, $Y$ and $U$ are Hilbert spaces, $J: Y \times U \rightarrow \mathbb{R}$ is the objective functional,  and $y \in Y$ and $u \in U$ are the state and the control, respectively. The obstacle constraint $y \in Y_{ad}$ and the control constraint $u \in U_{ad}$ impose realistic restrictions on $y$ and $u$, with \( Y_{\text{ad}} \subset Y \) and \( U_{\text{ad}} \subset U \) being nonempty, convex, closed admissible sets. 
The state \( y \) is governed by an obstacle problem, defined as the unique minimizer of the energy functional \( \mathscr{E}: Y \times U \rightarrow \mathbb{R} \) over \( Y_{\text{ad}} \) for a given control \( u \in U_{\text{ad}} \) \cite{mignot1984optimal}. This structure leads to a bilevel problem:
\begin{itemize}
	\item \textbf{Lower-level}: For a fixed \( u \), find \( y \in Y_{\text{ad}} \) that minimizes \( \mathscr{E}(y, u) \).
	\item \textbf{Upper-level}: Find an optimal control \( u^* \in U_{\text{ad}} \) such that the pair \((y^*, u^*)\) minimizes \( J(y, u) \), where \( y^* \) is the solution to the obstacle problem with \( u^* \).
\end{itemize}
From a physical perspective, the lower-level problem models the equilibrium of the system under a given 
control, while the upper-level problem seeks a control that steers this 
equilibrium toward a desired target.

Note that the bilevel formulation in \eqref{general control} is essential rather than optional. 
Indeed, for each admissible control $u \in U_{\mathrm{ad}}$, the associated 
state $y$ is implicitly defined as the solution of an obstacle problem, 
i.e., as the minimizer \( \mathscr{E}(y, u) \) over $Y_{ad}$. 
The solution operator induced by this obstacle problem is, in general, nonsmooth, which precludes direct elimination of the lower-level problem.

\subsection{Classical Numerical Methods}

A variety of numerical methods have been proposed in the literature to address the problem \eqref{general control}. These methods can generally be classified into two categories. The first category addresses obstacle problems by smoothing them via penalty terms or relaxation techniques, see e.g., \cite{hintermuller2009mathematical,hintermuller2011smooth,hintermuller2015ell,schiela2013convergence}. While these approaches effectively alleviate the nondifferentiability of the solution operator, their numerical implementation  necessitates solving a sequence of approximate subproblems, which significantly increases the overall complexity.
Furthermore, these methods cannot guarantee the strict enforcement of the obstacle constraint \( y \in Y_{ad} \).  The second category tackles the problem \eqref{general control} with non-smooth optimization tools, such as subgradient and generalized-differentiation methods \cite{hintermuller2008active,hintermuller2016bundle,surowiec2018numerical}. These methods involve solving an obstacle problem at each iteration, typically using semismooth Newton or multigrid techniques. Detailed discussions on the numerical methods for solving \eqref{general control} can be found in \cite{surowiec2018numerical}.

Both categories of methods rely on mesh-based discretization techniques, such as FDMs or FEMs, which give rise to large-scale algebraic systems that are often ill-conditioned. Solving such systems usually necessitates the use of advanced numerical solvers combined with carefully designed preconditioning strategies, which can lead to significant computational and memory costs, particularly for fine meshes, complex geometries, or high-dimensional problems. Hence, these methods are typically limited to low-dimensional problems and struggle to scale to high-dimensional settings or complex domains.
Although adaptive finite element methods have been proposed to reduce the 
degrees of freedom \cite{brett2015mesh,gaevskaya2014adaptive,hintermuller2014dual,meyer2015adaptive}, their implementation remains challenging in 
practice due to the evolving free boundary between active and inactive 
regions, which complicates mesh refinement and coarsening strategies.

\subsection{Deep Learning Methods}

In recent years, deep learning techniques have achieved significant success in computational mathematics, particularly in solving partial differential equations (PDEs). These methods leverage the universal approximation capabilities of deep neural networks (NNs)      \cite{cybenko1989approximation,hornik1991approximation,hornik1989multilayer}, allowing for mesh-free solutions that are efficient in high-dimensional settings and complex domains. Notable approaches include physics-informed neural networks (PINNs)  \cite{raissi2019physics}, the Deep Ritz method  \cite{yu2018deep}, and the Deep Galerkin method  \cite{sirignano2018dgm}.

Building upon these advances, recent efforts have sought to leverage deep learning to solve obstacle problems, see e.g., \cite{alphonse2024neural,cheng2023deep,combettes2020deep,darehmiraki2022deep,el2025physics,gao2025prox,zhao2022two}. Note that these approaches are specifically designed for obstacle problems, but they do not address the 
optimal control problem~\eqref{general control}, where the obstacle problem appears as a constraint. Meanwhile, deep learning methods for optimal control of PDEs have been explored in \cite{barry2022physics,cao2025adversarial,hao2022bi,lai2025hard,lu2021physics,mowlavi2023optimal,song2024admm}. While deep learning has demonstrated significant success in both  obstacle problems and optimal control of PDEs, direct applications of existing deep learning methods to optimal control of obstacle problems are impractical or inefficient, as to be demonstrated in section \ref{se:existing}. 


\subsection{Methodology}

We propose a bilevel deep learning framework with a single-loop training structure for solving the problem~\eqref{general control}. 
 First, we design constraint-embedding NNs to approximate $u$ and $y$. Such NNs strictly enforce the constraints $y\in Y_{ad}$ and $u\in U_{ad}$, eliminating the need for penalty terms or active-set identification. Based on these NN approximations, we formulate a finite-dimensional stochastic bilevel optimization problem that approximates \eqref{general control} in a mesh-free manner.

To solve the resulting stochastic bilevel optimization problem, we develop a Single-Loop Stochastic First-Order Bilevel Algorithm (S2-FOBA), which enables efficient training of the NNs and offers several significant advantages. First, it does not require the lower-level singleton assumption, which is often violated because the NN-parameterized lower-level problem is highly nonconvex. Second, S2-FOBA is inherently stochastic; it approximates the integrals in $\mathcal{J}$ and $\mathcal{E}$ via Monte Carlo sampling using mini-batches at each iteration, which accelerates training and mitigates overfitting to a fixed set of collocation points. Additionally, S2-FOBA operates with a single-loop structure and only requires first-order gradient information, making it computationally and memory-efficient.  Finally, the convergence for S2-FOBA is analyzed under mild assumptions, which guarantees the training process is numerically stable. In contrast, the method in~\cite{hao2022bi} does not include a convergence analysis and provides only an error estimate in a deterministic setting, which does not apply to practical stochastic settings. Moreover, its validity requires the strong convexity of the lower-level problem, a restrictive assumption,
and it requires solving a large linear system at each control update, resulting in higher computational cost.

The proposed bilevel deep learning method preserves the intrinsic bilevel structure of \eqref{general control}, rather than formulating the problem as a weighted combination of the upper- and lower-level objectives. Crucially, such weighted formulations neglect the strict hierarchical dependence of the state $y$ on the control $u$. Hence, the computed solutions are often physically inconsistent and highly sensitive to the choice of weighting parameters, see section \ref{se:experiments} for the numerical demonstrations.
Compared with classical numerical methods, the proposed method does not require repeatedly solving discretized PDE-related subproblems. Moreover, it is mesh-free and therefore applicable to problems modeled in high-dimensional or complex domains. Owing to these properties, the method applies to multiple optimal control settings and extends naturally to optimal control of general elliptic variational inequalities (EVIs), as demonstrated in section \ref{se:extension}. 

Extensive numerical experiments are presented to evaluate the proposed method. In addition to standard two-dimensional benchmark examples, the numerical study includes a five-dimensional example and a problem posed on a complex domain, demonstrating the scalability and geometric flexibility of the framework. Comparisons with the widely used discretization-based active-set method \cite{hintermuller2008active} are also included. The results show that the proposed method achieves satisfactory accuracy, robust constraint enforcement, and favorable performance relative to existing deep learning approaches.


\subsection{{Contributions}}

The main contributions of this work can be summarized as follows.
\begin{itemize}
    \item We propose a bilevel deep learning framework for optimal control of obstacle problems that preserves the intrinsic hierarchical structure of the model, rather than replacing it by a weighted single-level formulation.
    
    \item We design constraint-embedding neural networks for the state and the control, so that the admissibility conditions are enforced by construction and no penalty tuning or active-set identification is needed during training.
    
    \item We develop the single-loop stochastic first-order algorithm S2-FOBA, based on a Moreau-envelope reformulation of the lower-level problem. The algorithm uses only first-order information, avoids nested solves, and does not rely on lower-level uniqueness.
    
    \item We establish convergence guarantees for S2-FOBA under mild assumptions, showing descent of a merit function and decay of a first-order stationarity measure for the penalized problem despite inexact lower-level updates.
    
    \item We validate the proposed framework on several benchmark problems, including two-dimensional cases with regular and irregular obstacles, a five-dimensional example, and a problem posed on a complex domain, and compare it with existing deep learning approaches and the discretization-based active-set method.
\end{itemize}

\subsection{Organization} The remainder of this paper is organized as follows. In section \ref{se:preliminary},  for later convenience, we specify the general model \eqref{general control} as a concrete distributed optimal control problem and then  summarize some existing results. Then, we introduce the NN approximation, develop the S2-FOBA training algorithm, and hence present the proposed bilevel deep learning method in section \ref{se: algorithm}. The convergence behavior of S2-FOBA is analyzed in section \ref{se:convergence}. Numerical experiments on several benchmark examples are presented in section \ref{se:experiments}.  In section \ref{se:extension}, we showcase some extensions of the bilevel deep learning method. Finally, we present some concluding remarks and discuss possible directions for future work in section \ref{se:conclusion}.

\section{Preliminaries}\label{se:preliminary}
This section presents some preliminary concepts that will be useful throughout this work. To fixed ideas, we first specialize the general model \eqref{general control} to the distributed optimal control of an obstacle problem. Subsequently, we summarize some relevant existing theoretical results and discuss the application of current deep learning methods to this specific problem class.

\subsection{The Distributed Optimal Control of an Obstacle Problem}

Let \(\Omega\subset\mathbb{R}^d\) be a bounded domain with Lipschitz continuous boundary $\partial\Omega$.   We consider the following distributed optimal control problem:
\begin{equation}\label{eq:distribution}
\left\{
\begin{aligned}
\min_{y \in H_0^1(\Omega),\,u \in L^2(\Omega)}&J(y, u)
:=\frac{1}{2}\|y - y_d\|_{L^2(\Omega)}^2 + \frac{\sigma}{2}\|u\|_{L^2(\Omega)}^2 \\
\text{s.t.}\quad 
&y =\underset{y' \in Y_{ad}}{\arg\min}~\mathscr{E}(y^\prime, u)
:=\int_{\Omega}\Bigl(\tfrac12|\nabla y'|^2 - (f+u)\,y'\Bigr)\,\mathrm{d}x,
\quad u \in U_{ad}.
\end{aligned}
\right.
\end{equation}
Above, $y_d \in L^2(\Omega)$ denotes the target state and
$f \in L^2(\Omega)$ is a given external force.
The parameter $\sigma > 0$ serves as a regularization parameter. 
The admissible sets $Y_{ad}$ and $U_{ad}$ for the state $y$ and the control $u$, respectively, are defined as
\begin{equation}\label{eq:def_constraint}
\begin{aligned}
	&Y_{ad}
	=\{\,y\in H_0^1(\Omega)\mid y(x) \ge \psi(x)\text{ a.e.\ in }\Omega\},\\
	&U_{ad}
	=\{\,u\in L^2(\Omega)\mid u_a\le u(x)\le u_b\text{ a.e.\ in }\Omega\},
\end{aligned}
\end{equation}
where $\psi \in H_0^1(\Omega)$ is an obstacle and $u_a$, $u_b$ are assumed to be constants for simplicity. 

Problem \eqref{eq:distribution} seeks a control \( u \in U_{ad}\) such that the corresponding state \( y:=y(u) \), which minimizes the energy functional $\mathscr{E}$ under the total external force \( f + u \), matches the target state \( y_d \in L^2(\Omega)\).

The existence of solution of \eqref{eq:distribution} has been established in e.g., \cite{meyer2013priori,mignot1984optimal}. Note that \eqref{eq:distribution} is nonconvex and its solution is generally  non-unique. Moreover, it has been shown in \cite{mignot1976controle} that $y$ is in general not G{\^a}teaux-differentiable with respect to $u$ unless the biactive set $\{x\in\Omega\mid -\Delta y(x)=u(x)+f(x), y(x)=\psi(x)\}$ has measure zero. 

\subsection{Applications of Existing Deep Learning Approaches to Problem (\ref{eq:distribution}) }\label{se:existing}

As noted in the introduction, some deep learning methodologies developed for optimal control of PDEs can, in principle, be adapted to solve~\eqref{eq:distribution}, provided suitable NN approximations of $u$ and $y$ are employed. We review two representative approaches below.

\subsubsection{The Stationarity Condition-Based Approach} 
One natural approach is to characterize optimal solutions through 
stationarity conditions involving adjoint variables and Lagrange multipliers. To this end, we introduce the adjoint variable  \( p \in H_0^1(\Omega) \), the multipliers \( \xi \in L^2(\Omega) \), \( \lambda \in H^{-1}(\Omega) \), and \( \phi \in H_0^1(\Omega) \). Under some regularity assumptions (cf.~\cite{surowiec2018numerical}), the C-stationarity conditions of~\eqref{eq:distribution} read as
\begin{equation}\label{eq:system}
\left\{
\begin{aligned}
&(\sigma u - p, u-v)_{L^2(\Omega)} \geq 0, \forall v\in U_{ad}, \\
&y - \lambda - \Delta p = y_d, \quad
-\Delta y - u - \xi = f,\\
&(y-\psi, \xi) = 0, \quad y \geq \psi \text{ a.e.}, \quad \xi \geq 0 \text{ a.e.}, \\
&\langle \lambda, p \rangle \leq 0, \quad
p = 0 \text{ a.e. in } \{\xi > 0\},  \\
&\langle \lambda, \phi \rangle = 0 \quad \forall \phi \in H_0^1(\Omega),  \quad
\phi = 0 \text{ a.e. in } \mathbb{A},
\end{aligned}
\right.
\end{equation}
where $\mathbb{A}= \{\,x \in \Omega \mid y(x) = \psi(x)\,\}$ is called the active set.  More discussions on various types of stationarity conditions of  (\ref{eq:distribution}) can be found in \cite{surowiec2018numerical,wachsmuth2014strong,wachsmuth2016towards}.

Following \cite{barry2022physics}, PINNs \cite{raissi2019physics} can be conceptually applied to solve the system of equations \eqref{eq:system} and, consequently, the problem \eqref{eq:distribution}. This approach requires constructing six NNs to approximate the variables \(y, u, p\) and the multipliers \(\lambda, \xi, \phi\).  These NNs are trained simultaneously by minimizing a composite loss function formed from the residuals of \eqref{eq:system}.

In practice, however, the simultaneous training of multiple strongly coupled NNs associated with \eqref{eq:system} is numerically unstable and exhibits poor scalability with the problem size. More importantly, the complementarity conditions imposed on $\lambda$ are difficult to enforce strictly within the PINN framework, which can significantly degrade the numerical accuracy. Due to these limitations, this approach is not practical for solving \eqref{eq:distribution}.

\subsubsection{Objective Combination Approach}


Another commonly used strategy in deep learning methods for optimal control of PDEs  is to collapse the bilevel structure into a single-level optimization problem, see e.g., \cite{mowlavi2023optimal}. This is achieved by forming a weighted combination of the upper- and lower-level objectives. 

Specifically,  let $\hat{y}(x; \theta_y) $ and $\hat{u}(x; \theta_u)$ denote NN approximations of the state $y$ and the control $u$, respectively. This approach trains the NNs by minimizing a single-level objective function constructed by summing the upper- and lower-level objectives with a preset weight  \(w \in \mathbb{R}^+\):
\begin{equation} \label{eq: single reformulation}
	\min_{\theta_y,\theta_u}\quad L_{w}(\theta_y,\theta_u)
	:=J\bigl(\hat y(\cdot;\theta_y),\,\hat u(\cdot;\theta_u)\bigr)+w\mathscr{E}\bigl(\hat y(\cdot;\theta_y), \hat u(\cdot;\theta_u)\bigr).
\end{equation}
Then, a deep learning method can be derived for \eqref{eq:distribution}, as listed in Algorithm~\ref{alg:Single-level Deep Learning Method}.

\begin{algorithm}[H]
	\caption{The Single-Level Deep Learning Method for Problem~\eqref{eq:distribution}} \label{alg:Single-level Deep Learning Method}
	\begin{algorithmic}[1]
		\Require A fixed weight $w$ for problem~\eqref{eq: single reformulation}.
		\State Initialize the NNs $\hat{y}(x; \theta_y)$ and $\hat{u}(x; \theta_u)$ with $\theta^0_y$ and $\theta^0_u$.
		\State Train the NNs $\hat{y}(x; \theta_y)$ and $\hat{u}(x; \theta_u)$ to find the optimal parameters $\theta^*_y$ and $\theta^*_u$ by solving \eqref{eq: single reformulation}.
	\end{algorithmic}
\end{algorithm}

This approach is easy to implement. However, it suffers from several major drawbacks. First, the performance is highly sensitive to the hyperparameter $w$, which requires careful tuning and lacks principled tuning strategies. More importantly, even with substantial tuning of $w$, solving the single-level problem \eqref{eq: single reformulation} generally fails to yield a solution to the original problem \eqref{eq:distribution}. This formulation ignores the constraint that $y = \arg\min_{y' \in Y_{ad}}\mathscr{E}(y^\prime, u)$, thereby breaking the bilevel hierarchical structure of \eqref{eq:distribution}, specifically, the hierarchical dependence of $y$ on $u$. In particular, note that $y$ solves the lower-level problem  if and only if  $\mathscr{E}(y,u)
\le \mathscr{E}(y',u), \forall y' \in Y_{\mathrm{ad}}.$ Hence, the problem \eqref{eq:distribution} can be reformulated as
	{\small
\begin{equation}\label{eq:eq_oc}
		\min_{y \in H_0^1(\Omega),\,u \in U_{ad}} ~ J(y,u)\quad
		\text{s.t.} \quad
		\mathscr{E}(y,u)
		\le \mathscr{E}(y',u),
	~ \forall y' \in {Y}_{\mathrm{ad}}~ \Big(\text{or}~ \mathscr{E}(y,u)\leq \min_{y'\in Y_{ad}}\mathscr{E}(y',u)\Big).
\end{equation}
}

This observation highlights a fundamental flaw in the direct objective combination \eqref{eq: single reformulation}: it merely penalizes the value of the lower-level energy functional rather than the optimality violation. Therefore, \eqref{eq: single reformulation} cannot be viewed as a valid penalized formulation of the original bilevel problem \eqref{eq:distribution}.
Consequently, Algorithm~\ref{alg:Single-level Deep Learning Method} does not reliably enforce optimality of the lower-level state and typically fails to produce high-quality controls. These issues are clearly illustrated in our numerical experiments; see Example 1 in section~\ref{se:experiments}. In summary, Algorithm \ref{alg:Single-level Deep Learning Method} does not reliably produce feasible or meaningful solutions for optimal control of obstacle problems.

\section{The Bilevel Deep Learning Method for Solving Problem~(\ref{eq:distribution})}\label{se: algorithm}

This section presents the proposed bilevel deep learning method for solving the problem~\eqref{eq:distribution}. We first approximate the state $y$ and the control $u$ using NNs that directly embed the problem constraints. The resulting approximation of \eqref{eq:distribution} leads to a bilevel optimization problem formulated in terms of the NN parameters. To efficiently solve this bilevel problem, and thereby train the NNs, we develop the S2-FOBA, a stochastic algorithm specifically designed for this setting. Integrating these components yields our complete method to solve \eqref{eq:distribution}.

\subsection{Neural Network Approximations with Constraints Embedding}\label{subsection:resnets enbedding}
 In this section, we propose NNs  $ \hat{y}(x; \theta_y) $ and $ \hat{u}(x; \theta_u) $ with constraints embedding, which directly incorporates obstacle and possible control constraints into the neural network design, to approximate $y$ and $u$, respectively.

Let $\mathcal{N}(x;\theta_y)$ and \(\mathcal{N}(x; \theta_u)\) be the raw outputs of NNs with smooth activation functions (e.g., Swish, $\tanh$, or Softplus).  To approximate the state \(y\), we define
\begin{equation} \label{eq:state_embedding}
  \hat{y}(x; \theta_y) = m(x)(\mathcal{N}(x;\theta_y))^2 + \psi(x), 
\end{equation}
where \( m \in C^\infty(\overline{\Omega}) \) is chosen such that
   $m(x) = 0$ for $x \in \partial \Omega$, and 
    $m(x) > 0$ for $x \in \Omega.$

When $U_{ad}=L^2(\Omega)$,  we set
\begin{equation} \label{eq:control_embedding}
  \hat{u}(x; \theta_u) = \mathcal{N}(x;\theta_u).
\end{equation}
When $U_{ad}\subset L^2(\Omega)$ as defined in \eqref{eq:def_constraint}, we let
\begin{equation}\label{eq:control-embedding-relu}
\hat{u}(x; \theta_u) = -\operatorname{ReLU}\biggl(u_b - \Bigl[\operatorname{ReLU}\bigl(\mathcal{N}(x; \theta_u) - u_a\bigr) + u_a\Bigr]\biggr) + u_b,
\end{equation}
which strictly enforces the control constraints $u_a \leq \hat u(x;\theta_u) \leq u_b$. 
Similarly, for the state $y$, one can also consider
\begin{equation}\label{eq:state-embedding-relu}
\hat{y}(x; \theta_y) 
= \operatorname{ReLU}\bigl(\mathcal{N}(x; \theta_y)m(x) - \psi(x)\bigr) + \psi(x). 
\end{equation}


After approximating the state \( y \) and the control \( u \) respectively by the NNs given in \eqref{eq:state_embedding}-\eqref{eq:control_embedding} or \eqref{eq:control-embedding-relu}-\eqref{eq:state-embedding-relu}, it follows from \cite{yu2018deep}, the original problem \eqref{eq:distribution} is approximated by the following bilevel optimization problem in terms of $\theta_y$ and $\theta_u$:
\begin{equation*}
\left\{
  \begin{aligned}
	\min_{\theta_u,\theta_y}
	&~
	\frac{1}{2}\int_{\Omega}|\hat{y}\left(x ; \theta_y\right)  - y_d(x)|^2  \,\mathrm{d}x + \frac{\sigma}{2}\int_\Omega|\hat{u}\left(x ; \theta_u\right)|^2\,\mathrm{d}x, \\
	\text {s.t.}&~
	\theta_y
	\in \underset{\theta_y}{\arg\min}\,
	\int_{\Omega}\Bigl(\tfrac12|\nabla \hat{y}\left(x ; \theta_y\right)  |^2 - (f(x)+\hat{u}\left(x ; \theta_u\right) )\,\hat{y}\left(x ; \theta_y\right) \Bigr)\,\mathrm{d}x,
\end{aligned}
\right.
\end{equation*}
where, with a slight abuse of notation, we denote by $\hat{y}\left(x ; \theta_y\right)$ the NN used to approximate $y'$ in \eqref{eq:distribution}. 
Let 
$\mathcal{D}$ be the uniform distribution on $\Omega$ and introduce
$$
\left\{
\begin{aligned}
	L(x; \theta_y, \theta_u) &:= \frac{1}{2}|\hat{y}\left(x ; \theta_y\right)  - y_d(x)|^2  + \frac{\sigma}{2}|\hat{u}\left(x ; \theta_u\right)|^2,\\
	\ell(x; \theta_y, \theta_u) &:=  \tfrac12|\nabla \hat{y}\left(x ; \theta_y\right)  |^2 - (f(x)+\hat{u}\left(x ; \theta_u\right) )\,\hat{y}\left(x ; \theta_y\right).
\end{aligned}
\right.
$$
We thus obtain the following stochastic bilevel problem, which serves as an approximation of \eqref{eq:distribution}.
\begin{equation} \label{eq: stochastic bi}
\left\{
  \begin{aligned}
    \min_{\theta_y,\theta_u}~&j\bigl(\theta_y, \theta_u\bigr)= \mathbb{E}_{x \sim \mathcal{D}} \left[ L(x; \theta_y, \theta_u) \right] \\
    \text{s.t.}~ &\theta_y \in \underset{\theta_y}{\arg\min}\, {e}\bigl(\theta_y, \theta_u\bigr)=\mathbb{E}_{x \sim \mathcal{D}}  \left[ \ell(x; \theta_y, \theta_u)\right].
  \end{aligned}
  \right.
\end{equation}
It is worth noting that the lower-level problem is, in general, nonconvex, and the corresponding solution is not necessarily unique.



\subsection{The S2-FOBA for Problem (\ref{eq: stochastic bi})} \label{subsec:TSP}

In this section, we develop the S2-FOBA to solve the problem \eqref{eq: stochastic bi} and, consequently, to train the neural networks $\hat{u}(x; \theta_u)$ and $\hat{y}(x; \theta_y)$. The S2-FOBA is based on the Moreau envelope-based reformulation of \eqref{eq: stochastic bi} \cite{gao2023moreau}. For clarity, we illustrate the main ideas under the assumption that $j(\theta_y, \theta_u)$ and $e(\theta_y, \theta_u)$ are continuously differentiable with respect to  $\theta_y$ and $\theta_u$.

\subsubsection{Moreau Envelope-Based Reformulation}
The Moreau envelope-based reformulation \cite{gao2023moreau} recasts the bilevel problem \eqref{eq: stochastic bi} as the following single-level constrained optimization problem:
\begin{equation} \label{eq:constrained_reform}
		\min_{\theta_y, \theta_u} \,\, j(\theta_y, \theta_u) \qquad
		\text{s.t.} \quad  e(\theta_y, \theta_u) \leq e_\gamma(\theta_y, \theta_u),
\end{equation}
where $e_{\gamma}(\theta_y,\theta_u)$ is the Moreau envelope of $e(\theta_y,\theta_u)$ defined as:
\begin{equation} \label{moreau-lower}
	e_\gamma(\theta_y,\theta_u) := \min _z  \left\{ e(z,\theta_u)+\frac{1}{2 \gamma}\|z-\theta_y\|^2\right\}
\end{equation}
with the proximal parameter $\gamma>0$. This formulation differs from the classical value function approach \eqref{eq:eq_oc} by replacing the value function, $v(\theta_u):= \min _z \{ e(z,\theta_u)\}$, with $e_\gamma(\theta_y,\theta_u)$. A distinct advantage of this substitution is smoothness: whereas $v(\theta_u)$ is generally nonsmooth, $e_\gamma(\theta_y,\theta_u)$ is continuously differentiable, as discussed below.

As shown in \cite[Theorem A.1]{liu2024moreau}, when $ e(\theta_y, \theta_u) $ is $\rho$-weakly convex with respect to $\theta_y$ (i.e., $e(\theta_y, \theta_u) + \frac{\rho}{2}\|\theta_y\|^2$ with $\rho>0$ is convex with respect to $\theta_y$), the problem \eqref{eq:constrained_reform} is equivalent to a stationarity-based relaxation of \eqref{eq: stochastic bi} where the lower-level solution set is replaced by the set of stationary points:
\begin{equation*}
	\min_{\theta_y, \theta_u} \,\, j(\theta_y, \theta_u) \qquad
	\text{s.t.} \quad  0 \in \nabla_{\theta_y} e(\theta_y, \theta_u).
\end{equation*}
Consequently, the problem \eqref{eq:constrained_reform} is equivalent to \eqref{eq: stochastic bi} if the set of stationary points coincides with the solution set, which holds, for instance, when $ e(\theta_y, \theta_u) $ is convex with respect to $\theta_y$ \cite[Theorem 1]{gao2023moreau}.

A key property of the Moreau envelope $e_\gamma(\theta_y, \theta_u)$ is its smoothness \cite[Lemma A.5]{liu2024moreau}. Specifically, if $ e(\theta_y, \theta_u) $ is $\rho$-weakly convex with respect to $\theta_y$ and $\gamma$ is chosen such that $ \gamma  < 1/\rho$, the minimization problem defining $	e_\gamma(\theta_y,\theta_u)$ in \eqref{moreau-lower} becomes strongly convex. Its unique solution is denoted by:
\begin{equation}\label{proximalLL}
	z^*_{\gamma}(\theta_y, \theta_u) := \arg \min_z \{ e(z, \theta_u) + \frac{1}{2 \gamma} \|z - \theta_y\|^2\}.
\end{equation}
Under these conditions, $e_\gamma(\theta_y, \theta_u)$ is continuously differentiable, with gradient:
\begin{equation} \label{eq:gradient}
	\nabla e_\gamma(\theta_y, \theta_u) = \left( \frac{1}{\gamma} \left(   \theta_y - z^*_{\gamma}(\theta_y, \theta_u) \right), \nabla_{\theta_u} e(z^*_{\gamma}(\theta_y, \theta_u), \theta_u) \right).
\end{equation}

\subsubsection{Penalized Formulation and Stochastic Algorithm}
To train $\theta_y$ and $\theta_u$, we apply a penalty strategy to  \eqref{eq:constrained_reform}, yielding the single-level objective
\begin{equation} \label{penalty}
\psi_{c}(\theta_y, \theta_u) := j(\theta_y, \theta_u) + c \left( e(\theta_y, \theta_u) - e_\gamma(\theta_y, \theta_u) \right),
\end{equation}
where $c > 0$ is a penalty parameter. 
Although the penalized formulation \eqref{penalty} superficially resembles the naive single-level formulation \eqref{eq: single reformulation}, the inclusion of the term $-e_\gamma(\theta_y, \theta_u)$ forms a key difference.
	This term arises directly from penalizing the constraint $e(\theta_y, \theta_u) \le e_\gamma(\theta_y, \theta_u)$ in the Moreau envelope-based reformulation \eqref{eq:constrained_reform}, and is therefore essential.  By contrast, the naive reformulation \eqref{eq: single reformulation} can be interpreted as penalizing the stronger constraint $e(\theta_y, \theta_u) \le \min_{\theta_u, \theta_y} e(\theta_y, \theta_u)$. This forces $(\theta_u, \theta_y)$ toward the joint global minimizer(s) of $e$, which is not equivalent to the lower-level feasibility requirement in the original bilevel problem \eqref{eq: stochastic bi}. Hence, the inclusion of $-e_\gamma(\theta_y, \theta_u)$ in \eqref{penalty} preserves the intended bilevel hierarchical structure, which is lost in the simple combined objective of \eqref{eq: single reformulation}.

We now present the iterative scheme of the S2-FOBA training algorithm, which is inspired by the MEHA method \cite{liu2024moreau}. 
Note that a direct stochastic gradient descent (SGD) on \eqref{penalty} is challenging because computing the gradient $\nabla e_\gamma(\theta_y, \theta_u)$ via \eqref{eq:gradient} requires solving the proximal subproblem \eqref{proximalLL} to find $z^*_{\gamma}(\theta_y, \theta_u)$ at each step. To avoid this costly inner minimization, S2-FOBA introduces an auxiliary variable sequence $\{z^k\}$ that approximates $z^*_{\gamma}$. This auxiliary variable is updated concurrently with the primary parameters $\theta_y$ and $\theta_u$, resulting in a single-loop training process.

At each iteration $k$, we draw an i.i.d.\ mini-batch of $m$ samples, $\mathcal{T}_k=\{x_{i,k}\}_{i=1}^m\subset\Omega$ following the uniform distribution $\mathcal D$. 
The stochastic oracles for the gradients of $j$ and $e$ are constructed by averaging over this mini-batch, yielding $ \frac1m\sum_{i=1}^m  \nabla L(x_{i,k}; \cdot)$ and $\frac1m\sum_{i=1}^m  \nabla \ell (x_{i,k}; \cdot)$, respectively.
Using these stochastic oracles, we first update the auxiliary variable $z^k$ and the NN parameter $\theta_y^k$ by applying one stochastic gradient step to the proximal lower-level problem \eqref{proximalLL} and to the penalized objective in \eqref{penalty}. The gradient of $e_\gamma$ (per expression \eqref{eq:gradient}) is approximated using the newly computed $z^{k+1}$ in place of $z^*_{\gamma}$.
\begin{equation*} 
	\begin{aligned}
			z^{k+1} &= z^k - \eta_k \left(  \frac1m\sum_{i=1}^m  \nabla_{\theta_y} \ell (x_{i,k}; z^k, \theta_u^k)+ \frac{1}{\gamma}(z^k - \theta_y^k) \right),\\
			\theta_y^{k+1} & = \theta_y^k - \frac{\alpha_k}{m} \Big(  \frac{1}{c_k}\sum_{i=1}^m  \nabla_{\theta_y} L (x_{i, k} ; \theta_y^k, \theta_u^k) + \sum_{i=1}^m  \nabla_{\theta_y} \ell (x_{i, k} ; \theta_y^k, \theta_u^k) - \frac{m}{\gamma} (\theta_y^k - z^{k+1})\Big), 
	\end{aligned}
\end{equation*}
where \( \alpha_k, \eta_k > 0\) are the step sizes and $\{c_k\}$ is the sequence of penalty parameters.

Next, we update the NN parameter $\theta_u$ by applying a stochastic gradient step to the penalized objective in \eqref{penalty}.  After sampling another training set $\mathcal{T}_{k + \frac{1}{2}}:=\{x_{i, k+ \frac{1}{2}} \}_{i=1}^m \subset \Omega$, independently of $\mathcal{T}_{k}$, according to the uniform distribution $\mathcal D$, the update is
\begin{equation*} 
	{\scriptsize
	\begin{aligned}
		\theta_u^{k+1} =& \theta_u^k - \frac{\beta_k}{m} \Big(  \frac{1}{c_k}\sum_{i=1}^m  \nabla_{\theta_u} L (x_{i, k+ \frac{1}{2}} ; \theta_y^{k+1}, \theta_u^k) + \sum_{i=1}^m  \nabla_{\theta_u} \ell (x_{i, k+ \frac{1}{2}} ; \theta_y^{k+1}, \theta_u^k) - \sum_{i=1}^m  \nabla_{\theta_u} \ell (x_{i, k+ \frac{1}{2}} ; z^{k+1}, \theta_u^k)\Big),
	\end{aligned}
}
\end{equation*}
where \( \beta_k>0 \) is the step size.

The complete S2-FOBA procedure is detailed in Algorithm~\ref{alg:MEHA}. A significant advantage of this method is its computational efficiency. S2-FOBA is a single-loop, gradient-based algorithm that avoids computationally expensive inner loops. Each iteration only requires the computation of stochastic gradients for a sequential update of $z^k$, $\theta_y^k$, and $\theta_u^k$. All the nice features make S2-FOBA highly scalable and well-suited for large-scale training problems.

\begin{algorithm}[htpb]
	\caption{The S2-FOBA for Problem~\eqref{eq: stochastic bi}}
	\label{alg:MEHA}
	\begin{algorithmic}[1]
		\Require Initial parameters $\theta_y^0,\theta_u^0$, auxiliary $z^0$, proximal parameter $\gamma>0$, step sizes $\{\eta_k,\alpha_k,\beta_k\}$, penalty parameters $ \{c_k\}$, mini-batch size $m$, maximum iterations $K$.
		\For{$k=0$ to $K$}
		\State {Sample a training set $\mathcal{T}_k:=\left\{x_{i, k}\right\}_{i=1}^m \subset \Omega$ from the uniform distribution $\mathcal D$.}
		\State Compute  stochastic oracles of gradients: 
		$$
		{\small
		\begin{aligned}
				h_{j_y}^k := \frac{1}{m}\sum_{i=1}^m  \nabla_{\theta_y} L (x_{i, k}; \theta_y^k, \theta_u^k), \qquad h_{e_y}^k := \frac{1}{m}\sum_{i=1}^m  \nabla_{\theta_y} \ell(x_{i, k}; \theta_y^k, \theta_u^k),\\ 	h_{e_{y},z}^k := \frac1m\sum_{i=1}^m  \nabla_{\theta_y} \ell (x_{i,k}; z^k, \theta_u^k),\hspace{70pt}
		\end{aligned}
	}
		$$
		 and update $z^{k+1}$ and $\theta_y^{k+1}$ as 
		\[
		{\footnotesize
		\begin{aligned}
					z^{k+1} = z^k - \eta_k \left(  h_{e_{y},z}^k + \frac{1}{\gamma}(z^k - \theta_y^k) \right),
						\theta_y^{k+1}  = \theta_y^k - \alpha_k \left(  \frac{1}{c_k} h_{j_y}^k + h_{e_y}^k - \frac{1}{\gamma} (\theta_y^k - z^{k+1})\right).
		\end{aligned}
	}
		\]
		\State  {Sample a training set $\mathcal{T}_{k + \frac{1}{2}}:=\{x_{i, k+ \frac{1}{2}} \}_{i=1}^m \subset \Omega$, independently of $\mathcal{T}_{k}$, from the uniform distribution $\mathcal D$.}
		\State Compute  stochastic oracles of gradients:   
		\[{\small
		\begin{aligned}
			h_{j_u}^k := \frac{1}{m}\sum_{i=1}^m  \nabla_{\theta_u} L (x_{i, k+ \frac{1}{2}}; \theta_y^{k+1}, \theta_u^k), \qquad  h_{e_u}^k := \frac{1}{m}\sum_{i=1}^m  \nabla_{\theta_u} \ell(x_{i, k+ \frac{1}{2}}; \theta_y^{k+1}, \theta_u^k) \\
			h_{e_{u},z}^k := \frac1m\sum_{i=1}^m  \nabla_{\theta_u} \ell (x_{i, k+ \frac{1}{2}}; z^{k+1}, \theta_u^k), \hspace{70pt}
		\end{aligned}}
		\]
		and update  $\theta_u^{k+1}$ as
		\[{\small
		\begin{aligned}
			\theta_u^{k+1}  = \theta_u^k - \beta_k \left(  \frac{1}{c_k}h_{j_u}^k + h_{e_u}^k  -  h_{e_{u},z}^k  \right).
		\end{aligned}}
		\]
		\EndFor
	\end{algorithmic}
\end{algorithm}

%
%

\subsection{Lower-Level Optimality Refinement} \label{Subsection: framework}

The state $y$ is determined by the minimization of an energy functional $\mathscr{E}(y, u)$ for a given control $u$. This implies that the NN parameters should satisfy the lower-level optimality condition, $\theta_y \in \arg\min_{\theta_y}\,{e}(\theta_y, \theta_u)$. This condition is precisely the feasibility constraint in the reformulated problem \eqref{eq:constrained_reform}.

Since Algorithm \ref{alg:MEHA} is based on the approximation \eqref{penalty} of the constrained problem \eqref{eq:constrained_reform}, the parameters $(\theta_y, \theta_u)$ obtained from training are not guaranteed to be exactly feasible for \eqref{eq:constrained_reform}. This means the resulting state parameter $\theta_y$ may not represent a true minimizer of the lower-level objective ${e}(\theta_y, \theta_u)$ for the computed $\theta_u$.

To mitigate this potential infeasibility, we adopt a two-stage strategy inspired by \cite{mowlavi2023optimal}, which introduces a post-training feasibility refinement procedure.

\paragraph{\textbf{Stage 1: Bilevel Training}} 
Use Algorithm~\ref{alg:MEHA} to compute the network parameters $\hat \theta_y$ and $\tilde \theta_u$, yielding an approximate state \( \hat y\) and control \(\tilde u\).

\paragraph{\textbf{Stage 2: Lower-Level Optimality Improvement}}
With the control parameter $\tilde \theta_u$ fixed, initialize the lower-level solver with the state parameter $\hat \theta_y$ obtained in Stage~1 and solve the lower-level problem $\min_{\theta_y}\,{e}(\theta_y, \theta_u)$ to obtain a refined state. This refinement step mitigates the deviations from feasibility induced by the bilevel training.

Combined with constraint-embedding NN approximations, the proposed two-stage approach provides an effective and practical method for solving \eqref{eq:distribution}. Note that Stage~2 consists of a single forward solve of the obstacle problem using the control from Stage~1; it enforces physical consistency of the state without modifying the control. Hence, unless stated otherwise, numerical results are reported using the refined state from Stage~2 and the control from Stage~1.

\subsection{A Bilevel Deep Learning Method for Problem (\ref{eq:distribution})}
Based on the previous discussions, a bilevel deep learning method is proposed for solving the problem ~\eqref{eq:distribution}, as listed in Algorithm \ref{alg:Bilevel Deep Learning Method}. 
\begin{algorithm}[htpb]
	\caption{A Bilevel Deep Learning Method for Problem~\eqref{eq:distribution}} \label{alg:Bilevel Deep Learning Method}
	\begin{algorithmic}[1]
		\Require Parameters for Algorithm~\ref{alg:MEHA}.
		\State Initialize $\hat{y}(x; \theta_y)$ and $\hat{u}(x; \theta_u)$ as described in section \ref{subsection:resnets enbedding} with $\theta^0_y$ and $\theta^0_u$.
		\State \textbf{Stage 1:} Solve the bilevel optimization problem~\eqref{eq: stochastic bi} using Algorithm~\ref{alg:MEHA} to obtain parameters \( (\hat \theta_y, \tilde \theta_u) \) and hence \(\hat{y}(x; \hat \theta_y)\) and \( \tilde{u}(x; \tilde \theta_u)\).
		\State \textbf{Stage 2:} Fix \( \tilde \theta_u \) and compute the state \(\tilde{y}(x; \tilde \theta_y)\) by solving the lower-level problem of \eqref{eq: stochastic bi} via the Adam optimizer with the initial value \( \hat \theta_y \) obtained in \textbf{Stage 1}. 
		\State \textbf{Output:} the control \( \tilde{u}(x; \tilde \theta_u)\) from \textbf{Stage 1} and the state \(\tilde{y}(x; \tilde \theta_y)\)  from \textbf{Stage 2}.

	\end{algorithmic}
\end{algorithm}

\section{Convergence Analysis for Algorithm~\ref{alg:MEHA}}\label{se:convergence}

In this section, we analyze the convergence behavior of the iterates generated by Algorithm~\ref{alg:MEHA}. Since S2-FOBA is a single-loop scheme, the exact proximal point of the lower-level subproblem is not computed at each iteration; instead, it is tracked by the auxiliary variable $z^k$. Accordingly, the analysis proceeds through three ingredients: a merit function combining the penalized objective and the lower-level tracking error, a contraction estimate for the stochastic lower-level update, and a descent estimate for the coupled single-loop iteration. Together, these ingredients show that the inexact lower-level update remains controlled and that the overall scheme still yields descent of the merit function, leading to the main convergence result in Theorem~\ref{thm:convergence} and providing the theoretical justification for the proposed S2-FOBA algorithm.

\subsection{Main Results}
Our convergence analysis is based on the following assumptions.
\begin{itemize}
	\item[{\bf (A1)}] The upper-level objective function $j(\theta_y, \theta_u)$ is bounded from below, i.e., $\underline{j} := \inf_{\theta_y, \theta_u} j(\theta_y, \theta_u) > -\infty$. The lower-level objective function $e(\theta_y, \theta_u)$ is $\rho$-weakly convex, i.e., the function $(\theta_y, \theta_u) \mapsto e(\theta_y, \theta_u) + \frac{\rho}{2} \| (\theta_y, \theta_u)\|^2$ is convex.
	
	\item[{\bf (A2)}] The functions $j(\theta_y,\theta_u)$ and $e(\theta_y,\theta_u)$ are $L_j$-smooth and $L_e$-smooth, respectively; that is, their gradients $\nabla j(\theta_y,\theta_u)$ and $\nabla e(\theta_y,\theta_u)$ exist and are Lipschitz continuous with constants $L_j$ and $L_e$.
\end{itemize}
Note that $L_e$-smoothness of $e(\theta_y,\theta_u)$ implies $\rho$-weak convexity for any $\rho \ge L_e$. Moreover, in the optimal control problems considered in this work, the upper-level objective $j(\theta_y,\theta_u)$ is always nonnegative and thus bounded below by $0$.

To formalize the stochasticity, let $\mathcal{F}_k$ and $\mathcal{F}_{k+\frac{1}{2}}$ denote the $\sigma$-algebras generated by the samples up to steps $k$ and $k+\frac{1}{2}$, respectively:
\[
\mathcal{F}_k = \sigma \{ \mathcal{T}_0, \mathcal{T}_{\frac{1}{2}}, \ldots, \mathcal{T}_k \}, \quad \mathcal{F}_{k+\frac{1}{2}} = \sigma \{ \mathcal{T}_0, \mathcal{T}_{\frac{1}{2}}, \ldots, \mathcal{T}_k, \mathcal{T}_{k+\frac{1}{2}} \}.
\]
We next impose assumptions on the stochastic oracles employed in Algorithm~\ref{alg:MEHA}.
\begin{itemize}
	\item[{\bf (A3)}] For $w\in\{y,u\}$, define
	$
	(\theta_y^{k,w},\theta_u^{k,w},z^{k,w},\mathcal{F}_k^w)
	:=
	\begin{cases}
		(\theta_y^{k},  \theta_u^{k},  z^{k},  \mathcal{F}_k), & w=y,\\[2pt]
		(\theta_y^{k+1},\theta_u^{k},  z^{k+1},\mathcal{F}_{k+\frac12}), & w=u.
	\end{cases}
	$
	Let $h_{j_w}^k$, $h_{e_w}^k$ and $h_{e_{w},z}^k$ be the stochastic gradient
	oracles for $\nabla_{\theta_w} j(\theta_y^{k,w},\theta_u^{k,w})$,
	$\nabla_{\theta_w} e(\theta_y^{k,w},\theta_u^{k,w})$, and
	$\nabla_{\theta_w} e(z^{k,w},\theta_u^{k,w})$, respectively.
	We assume that, conditionally on $\mathcal{F}_k^w$, they are unbiased
	with uniformly bounded variance:
	\[
	\begin{aligned}
		\mathbb{E}\!\left[ h_{j_w}^k \mid \mathcal{F}_k^w \right]
		&= \nabla_{\theta_w} j(\theta_y^{k,w},\theta_u^{k,w}), &
		\mathbb{E}\!\left[ \| h_{j_w}^k - \nabla_{\theta_w} j(\theta_y^{k,w},\theta_u^{k,w}) \|^2
		\mid \mathcal{F}_k^w \right] &\le \sigma_j^2, \\
		\mathbb{E}\!\left[ h_{e_w}^k \mid \mathcal{F}_k^w \right]
		&= \nabla_{\theta_w} e(\theta_y^{k,w},\theta_u^{k,w}), &
		\mathbb{E}\!\left[ \| h_{e_w}^k - \nabla_{\theta_w} e(\theta_y^{k,w},\theta_u^{k,w}) \|^2
		\mid \mathcal{F}_k^w \right] &\le \sigma_e^2, \\
		\mathbb{E}\!\left[ h_{e_{w},z}^k \mid \mathcal{F}_k^w \right]
		&= \nabla_{\theta_w} e(z^{k,w},\theta_u^{k,w}), &
		\mathbb{E}\!\left[ \| h_{e_{w},z}^k - \nabla_{\theta_w} e(z^{k,w},\theta_u^{k,w}) \|^2
		\mid \mathcal{F}_k^w \right] &\le \sigma_e^2.
	\end{aligned}
	\]
	Moreover, the sample set $\mathcal{T}_{k+\frac12}$ is conditionally
	independent given $\mathcal{F}_k$.
\end{itemize}

These assumptions are satisfied in our setting since the samples in $\mathcal{T}_k$ and $\mathcal{T}_{k+\frac{1}{2}}$ are generated independently from the uniform distribution $\mathcal{D}$ over $\Omega$ at each iteration.

For notational simplicity, we define $\theta := (\theta_y,\theta_u)$, $\theta^k := (\theta_y^k,\theta_u^k)$, and $\theta^{k+\frac{1}{2}} := (\theta_y^{k+1},\theta_u^k)$. To analyze the convergence behavior of the iterates generated by Algorithm~\ref{alg:MEHA}, we define the following merit function for the iterates $\theta^k $:
\[
V_k := \phi_{c_k}(\theta^k ) + C_z \bigl\|z^k - z^*_{\gamma}(\theta^k )\bigr\|^2,
\]
where $C_z := {6(1+L_e^2)}/{(\gamma - \gamma^2 \rho)} > 0$ and
$
\phi_{c_k}(\theta) := \frac{1}{c_k} ( j(\theta)-\underline{j}) + \bigl(e(\theta) - e_\gamma(\theta)\bigr).
$

Here, $e(\theta)-e_\gamma(\theta)$ acts as a residual of the lower-level condition in the Moreau-envelope reformulation, while $\|z^k-z^*_{\gamma}(\theta^k)\|^2$ measures the error caused by replacing the exact proximal point with the single stochastic update $z^k$. Thus $V_k$ couples outer progress and inner accuracy in one quantity, which is why it is the natural merit function for the single-loop analysis. Then, we formally state the main convergence results in the following theorem.

\begin{theorem}\label{thm:convergence}
	Suppose that $\gamma \in (0, \frac{1}{2\rho})$ and the step sizes are chosen as
	\[
	\alpha_k = \alpha_0 (k+1)^{-p}, \quad \beta_k = \beta_0 (k+1)^{-p}, \quad \eta_k = \eta_0(k+1)^{-q},
	\]
	with $p \in ((q+1)/2, 1)$ and $q \in (1/2,1)$. Assume further that the penalty parameter $c_k$ is nondecreasing. If $\eta_0 \in (0, \frac{2}{L_{e} + 2/\gamma - \rho} )$, and $\alpha_0, \beta_0$ are sufficiently small, then the sequence of iterates $\{\theta^{k}\}$ generated by Algorithm~\ref{alg:MEHA} satisfies:
	\begin{equation}\label{thmeq1}
			{\small
		\begin{aligned}
			&\mathbb{E} \left[V_{k + 1} \right] +  \frac{\alpha_k}{2} \mathbb{E} \left[ \| \nabla_{\theta_y} \phi_{c_k}(\theta^{k})\|^2 \right] +  \frac{\beta_k}{2} \mathbb{E} \left[ \| \nabla_{\theta_u} \phi_{c_k}(  \theta^{k+\frac{1}{2}} )\|^2 \right] \\
			\le \, & \mathbb{E} \left[V_{k} \right] +  \left(  \frac{\alpha_k^2}{\eta_k} +\frac{\beta_k^2}{\eta_k} + \eta_k^2  \right)  C_\sigma \left( \sigma_{j}^2 + \sigma_{e}^2 \right),
		\end{aligned}
	}
	\end{equation}
	for some $C_\sigma > 0$. Consequently, there exists $M_V > 0$ such that, for any $K > 0$,
	\begin{equation*}
		\mathbb{E} \left[V_{K + 1} \right] + \sum_{k=0}^K \frac{\alpha_k}{2} \mathbb{E} \left[ \| \nabla_{\theta_y} \phi_{c_k}(\theta^{k})\|^2 \right] + \sum_{k=0}^K \frac{\beta_k}{2} \mathbb{E} \left[ \| \nabla_{\theta_u} \phi_{c_k}( \theta^{k+\frac{1}{2}})\|^2 \right] \le V_{0} + M_V.
	\end{equation*}
In particular, $
	\min_{0\le k \le K} \left\{ \mathbb{E} \left[ \|  \nabla_{\theta_y} \phi_{c_k}(\theta^{k})\| + \| \nabla_{\theta_u} \phi_{c_k}( \theta^{k+\frac{1}{2}}) \| \right] \right\} = \mathcal{O}\left( \frac{1}{K^{(1-p)/2}}\right).
	$
\end{theorem}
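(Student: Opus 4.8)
The plan is to treat $V_k$ as a Lyapunov (merit) function and to establish the one-step inequality \eqref{thmeq1} by combining a descent estimate for the penalized objective $\phi_{c_k}$ with a contraction estimate for the proximal tracking error $\|z^k - z^*_\gamma(\theta^k)\|^2$, then to telescope and extract the rate. The first ingredient is the regularity of the building blocks. Since $e$ is $L_e$-smooth and $\rho$-weakly convex and $\gamma < 1/(2\rho)$, the inner problem defining $z^*_\gamma$ in \eqref{moreau-lower} is $(1/\gamma - \rho)$-strongly convex and $(L_e + 1/\gamma)$-smooth; hence $z^*_\gamma(\theta)$ is single-valued and Lipschitz in $\theta$, and by \eqref{eq:gradient} the envelope $e_\gamma$ is continuously differentiable with Lipschitz gradient. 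From this I would record a smoothness modulus $L_\phi$ for $\phi_{c_k}$ that is uniform in $k$, since $1/c_k \le 1/c_0$ as $c_k$ is nondecreasing.

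Second, I would derive the descent of $\phi_{c_k}$ along the two sequential block updates. Applying the descent lemma to $\phi_{c_k}$ separately for the $\theta_y$-step (measurable with respect to $\mathcal{F}_k$) and the $\theta_u$-step (with respect to $\mathcal{F}_{k+\frac12}$), and taking conditional expectations using the unbiasedness and bounded-variance oracle assumptions (A3), produces the leading negative terms $-\frac{\alpha_k}{2}\|\nabla_{\theta_y}\phi_{c_k}(\theta^{k})\|^2$ and $-\frac{\beta_k}{2}\|\nabla_{\theta_u}\phi_{c_k}(\theta^{k+1/2})\|^2$ together with variance remainders of order $\alpha_k^2$ and $\beta_k^2$. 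The essential subtlety is that the algorithm evaluates $\nabla e_\gamma$ at the running iterate $z^{k+1}$ rather than at $z^*_\gamma$; by \eqref{eq:gradient} this introduces a bias whose size is controlled by $\|z^{k+1} - z^*_\gamma(\theta^{k})\|$ (and by $\|z^{k+1} - z^*_\gamma(\theta^{k+1/2})\|$ for the $\theta_u$-block). I would bound this bias via Young's inequality, producing positive terms proportional to $\|z^{k+1} - z^*_\gamma(\cdot)\|^2$ that must later be absorbed.

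Third, I would analyze the $z$-recursion. Because the $z$-update is one stochastic gradient step on the strongly convex proximal objective, the condition $\eta_0 < 2/(L_e + 2/\gamma - \rho)$ is exactly the stability range that makes the deterministic part a contraction toward $z^*_\gamma(\theta^{k})$ with factor $1 - (1/\gamma - \rho)\eta_k$. Combining this contraction with the oracle variance (term of order $\eta_k^2$) and with the drift $\|z^*_\gamma(\theta^{k+1}) - z^*_\gamma(\theta^{k})\|^2 \le L_{z^*}^2\|\theta^{k+1} - \theta^{k}\|^2 = O(\alpha_k^2 + \beta_k^2)$, which comes from the Lipschitz continuity of $z^*_\gamma$ and the size of the $\theta$-step, I would obtain a recursion of the form $\mathbb{E}[\|z^{k+1} - z^*_\gamma(\theta^{k+1})\|^2] \le (1 - c\eta_k)\|z^k - z^*_\gamma(\theta^{k})\|^2 + O(\eta_k^2 + (\alpha_k^2 + \beta_k^2)/\eta_k)$, where dividing the drift by $\eta_k$ arises from completing the square against the contraction. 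Scaling this by $C_z$ and adding it to the $\phi_{c_k}$-descent, the constant $C_z = 6(1+L_e^2)/(\gamma - \gamma^2\rho)$ is precisely what is needed so that the contraction term $-c\eta_k C_z\|z^k - z^*_\gamma(\theta^{k})\|^2$ dominates the positive bias terms from the second step; this cancellation is the crux. Finally, to pass from $\phi_{c_k}$ at step $k$ to $\phi_{c_{k+1}}$ inside $V_{k+1}$, I would use that $c_k$ is nondecreasing together with $j \ge \underline{j}$, so that $\phi_{c_{k+1}}(\theta) \le \phi_{c_k}(\theta)$; this monotonicity keeps the telescoping valid under the time-varying penalty and yields \eqref{thmeq1}.

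The remaining steps are routine. Summing \eqref{thmeq1} over $k = 0, \dots, K$ telescopes the $V_k$ terms, and the error series converges because $\sum_k (\alpha_k^2/\eta_k + \beta_k^2/\eta_k) \asymp \sum_k (k+1)^{-(2p-q)}$ and $\sum_k \eta_k^2 \asymp \sum_k (k+1)^{-2q}$ are both finite exactly under $p > (q+1)/2$ and $q > 1/2$; this gives the uniform bound with a finite $M_V$. For the rate, dividing the accumulated gradient terms by $\sum_{k=0}^K \alpha_k \asymp K^{1-p}$ (using $p < 1$) shows $\min_{0\le k\le K}\mathbb{E}[\|\nabla_{\theta_y}\phi_{c_k}(\theta^{k})\|^2 + \|\nabla_{\theta_u}\phi_{c_k}(\theta^{k+1/2})\|^2] = O(K^{-(1-p)})$, and applying Jensen's inequality $\mathbb{E}\|\cdot\| \le \sqrt{\mathbb{E}\|\cdot\|^2}$ converts this into the claimed $\mathcal{O}(K^{-(1-p)/2})$ bound for the sum of gradient norms. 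I expect the main obstacle to be the coefficient bookkeeping in the third step, namely verifying that $C_z$ as specified closes the merit-function descent with a strictly negative contraction, rather than any single hard estimate.
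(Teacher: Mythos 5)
Your proposal follows essentially the same route as the paper's proof: a merit-function argument combining the block-wise descent lemma for $\phi_{c_k}$ (with the bias from using $z^{k+1}$ in place of $z^*_\gamma$ bounded by $\|z^{k+1}-z^*_\gamma(\cdot)\|^2$), the strongly convex contraction of the $z$-update under $\eta_0 < 2/(L_e+2/\gamma-\rho)$, absorption of the bias via $C_z$ together with the monotonicity $\phi_{c_{k+1}}(\theta)\le\phi_{c_k}(\theta)$ from $c_{k+1}\ge c_k$ and $j\ge\underline{j}$, and finally telescoping under $p>(q+1)/2$, $q>1/2$ plus Jensen's inequality for the rate. One imprecision worth noting: you write $\|\theta^{k+1}-\theta^k\|^2 = O(\alpha_k^2+\beta_k^2)$, which does not hold verbatim since gradients are not assumed bounded; the paper instead decomposes this drift into $\|\mathbb{E}[\theta^{k+1}-\theta^k\mid\mathcal{F}_k]\|^2$ plus an $O(\alpha_k^2+\beta_k^2)$ variance and absorbs the former into the retained $-\tfrac{1}{2\alpha_k}$ and $-\tfrac{1}{2\beta_k}$ terms from the descent lemma (this absorption, requiring $\alpha_k,\beta_k \lesssim \eta_k$, is precisely what the hypothesis that $\alpha_0,\beta_0$ be sufficiently small buys), though your final error scaling $(\alpha_k^2+\beta_k^2)/\eta_k$ is nevertheless the correct one.
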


Theorem~\ref{thm:convergence} shows that the merit function decreases in expectation up to a summable stochastic error, and that the first-order stationarity measure of the penalized objective is controlled. In particular, the effect of replacing the exact proximal point by a single stochastic update is quantified through the second term in $V_k$, rather than through an inner solve carried out to high accuracy.

\subsection{Some Useful Lemmas}
Before proving Theorem~\ref{thm:convergence}, we present the three ingredients of the argument: regularity of the Moreau envelope, control of the stochastic lower-level update, and a descent estimate for the penalized objective.

The three lemmas below address the three ingredients that must be controlled in a single-loop bilevel method. Lemma~\ref{lem:convex-lgamma} shows regularity of the Moreau envelope. Lemma~\ref{lem:contraction} shows that one stochastic update of the auxiliary variable is enough to keep track of the exact proximal state. Lemma~\ref{lem:descent} then transfers these properties to a descent estimate for the penalized upper-level objective. Together, they explain why the lower-level problem need not be solved to high accuracy within each outer iteration.

We  define the update directions as:
\[
d_y^k := \frac{1}{c_k} h_{j_y}^k + h_{e_y}^k - \frac{1}{\gamma} (\theta_y^k - z^{k+1}), \quad
d_u^k := \frac{1}{c_k}h_{j_u}^k + h_{e_u}^k - h_{e_{u},z}^k.
\]

We first recall several useful properties of the Moreau envelope $e_\gamma(\theta)$, established in \cite[Lemmas A.4 and A.8]{liu2024moreau}, which will be repeatedly used in the analysis.

\begin{lemma} \label{lem:convex-lgamma}
	Suppose  that $\gamma \in (0, \frac{1}{2\rho})$. Then, for any $\rho_\gamma \ge 1/ \gamma$, the function $\theta \mapsto e_\gamma(\theta) + \tfrac{\rho_\gamma}{2}\|\theta\|^2$ is convex. Furthermore, the mapping $z^*_{\gamma}(\theta)$ is Lipschitz continuous; i.e., there exists a constant $L_{z_\gamma^*} > 0$ such that for any $\theta, \theta'$,
	\[
	\left\|z_\gamma^*(\theta) - z_\gamma^*(\theta')\right\| \leq L_{z_\gamma^*} \left\| \theta - \theta' \right\|.
	\]
\end{lemma}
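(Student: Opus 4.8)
The plan is to treat the two assertions separately, both relying on the explicit structure of the proximal subproblem \eqref{proximalLL} together with the $\rho$-weak convexity and $L_e$-smoothness of $e$ granted by (A1)--(A2). Throughout I write $\theta=(\theta_y,\theta_u)$ and note that $z^*_\gamma(\theta)$ is obtained by minimizing only in the $\theta_y$-slot, so $z$ lives in the same space as $\theta_y$ and $\nabla_z e(z,\theta_u)$ is the first partial gradient of $e$ evaluated at $(z,\theta_u)$.

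For the convexity claim I would use the partial-minimization characterization of the Moreau envelope. Pushing the quadratic inside the minimization gives
\[
e_\gamma(\theta) + \tfrac{\rho_\gamma}{2}\|\theta\|^2 = \min_z G(z,\theta_y,\theta_u), \qquad G(z,\theta_y,\theta_u) := e(z,\theta_u) + \tfrac{1}{2\gamma}\|z-\theta_y\|^2 + \tfrac{\rho_\gamma}{2}\|(\theta_y,\theta_u)\|^2.
\]
Since partial minimization of a jointly convex function is convex, it suffices to show $G$ is jointly convex in $(z,\theta_y,\theta_u)$. Using (A1), the map $(z,\theta_u)\mapsto e(z,\theta_u)+\tfrac{\rho}{2}(\|z\|^2+\|\theta_u\|^2)$ is convex, so $G$ equals this convex term plus the residual quadratic $\tfrac{1}{2\gamma}\|z-\theta_y\|^2+\tfrac{\rho_\gamma}{2}\|(\theta_y,\theta_u)\|^2-\tfrac{\rho}{2}(\|z\|^2+\|\theta_u\|^2)$. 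Its $\theta_u$-block is convex as soon as $\rho_\gamma\ge\rho$, which holds since $\rho_\gamma\ge 1/\gamma>2\rho$; its $(z,\theta_y)$-block has Hessian $\bigl(\begin{smallmatrix} 1/\gamma-\rho & -1/\gamma \\ -1/\gamma & 1/\gamma+\rho_\gamma \end{smallmatrix}\bigr)$, whose positive definiteness I would verify from its positive diagonal entries together with the determinant identity $\rho_\gamma(1/\gamma-\rho)-\rho/\gamma$, which is bounded below by $\tfrac{1}{\gamma}(1/\gamma-2\rho)>0$ using $\rho_\gamma\ge 1/\gamma$ and $\gamma<1/(2\rho)$.

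For the Lipschitz claim I would exploit that $\gamma<1/(2\rho)<1/\rho$ makes the inner objective in \eqref{proximalLL} $(1/\gamma-\rho)$-strongly convex in $z$, so $z^*_\gamma(\theta)$ is the unique stationary point characterized by $\nabla_z e(z^*_\gamma(\theta),\theta_u)+\tfrac{1}{\gamma}(z^*_\gamma(\theta)-\theta_y)=0$. Writing this identity at two points $\theta,\theta'$, subtracting, and pairing with $z^*_\gamma(\theta)-z^*_\gamma(\theta')$, I would control the diagonal cross term by the $\rho$-hypomonotonicity of $\nabla_z e(\cdot,\theta_u)$ (equivalent to weak convexity) and the $\theta_u$-perturbation term by the $L_e$-Lipschitz continuity of $\nabla e$. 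Rearranging yields
\[
\Bigl(\tfrac{1}{\gamma}-\rho\Bigr)\|z^*_\gamma(\theta)-z^*_\gamma(\theta')\| \le \tfrac{1}{\gamma}\|\theta_y-\theta_y'\| + L_e\|\theta_u-\theta_u'\|,
\]
and dividing by the positive constant $1/\gamma-\rho$ gives Lipschitz continuity with $L_{z^*_\gamma}=\sqrt{2}\max(1/\gamma,L_e)/(1/\gamma-\rho)$.

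The main obstacle I anticipate is the convexity part, specifically pinning down the sharp threshold on $\gamma$. The positivity of the $2\times 2$ determinant is exactly where the hypothesis $\gamma<1/(2\rho)$ (rather than the weaker $\gamma<1/\rho$ that already suffices for strong convexity of the inner problem, and hence for well-definedness of $z^*_\gamma$) becomes indispensable; heuristically, one factor of $\rho$ is consumed in passing from pointwise strong convexity of the proximal subproblem to joint convexity of the envelope. The Lipschitz estimate, by contrast, is a routine strong-monotonicity argument once the stationarity characterization is in place. Since the statement merely records \cite[Lemmas A.4 and A.8]{liu2024moreau}, one may alternatively cite these directly; the sketch above reconstructs their reasoning specialized to the partial Moreau envelope taken in the $\theta_y$-block.
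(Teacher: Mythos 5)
Your proposal is correct, but note that the paper itself contains no proof of this lemma: it is stated purely as a recollection of \cite[Lemmas~A.4 and~A.8]{liu2024moreau}, so there is no internal argument to compare against. What you have done is reconstruct those cited proofs, specialized to the partial Moreau envelope in the $\theta_y$-block, and the reconstruction checks out. For the convexity part, writing $e_\gamma(\theta)+\tfrac{\rho_\gamma}{2}\|\theta\|^2=\min_z G(z,\theta)$ and splitting $G$ into the jointly convex term $e(z,\theta_u)+\tfrac{\rho}{2}(\|z\|^2+\|\theta_u\|^2)$ plus a residual quadratic is the standard device; your residual is indeed block-separable (no $\theta_u$--$(z,\theta_y)$ cross terms), the $\theta_u$-coefficient $\rho_\gamma-\rho$ is nonnegative since $\rho_\gamma\ge 1/\gamma>2\rho$, and the $(z,\theta_y)$-block determinant $(1/\gamma-\rho)(1/\gamma+\rho_\gamma)-1/\gamma^2=\rho_\gamma(1/\gamma-\rho)-\rho/\gamma$ is increasing in $\rho_\gamma$ (as $1/\gamma-\rho>0$) and at $\rho_\gamma=1/\gamma$ equals $\tfrac{1}{\gamma}(1/\gamma-2\rho)>0$, which is precisely where $\gamma<1/(2\rho)$ enters; your remark that the weaker condition $\gamma<1/\rho$ only buys well-posedness of $z^*_\gamma$, not convexity of the shifted envelope, correctly identifies the role of the sharper threshold. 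Partial minimization of the jointly convex $G$ then yields convexity, with finiteness of the infimum guaranteed by the $(1/\gamma-\rho)$-strong convexity in $z$. The Lipschitz part via the stationarity identity $\nabla_z e(z^*_\gamma(\theta),\theta_u)+\tfrac{1}{\gamma}(z^*_\gamma(\theta)-\theta_y)=0$, $\rho$-hypomonotonicity of $\nabla_z e(\cdot,\theta_u)$ (a consequence of the joint weak convexity in (A1)), and $L_e$-Lipschitz continuity of $\nabla e$ from (A2) is likewise a routine strong-monotonicity estimate, and your explicit constant $L_{z^*_\gamma}=\sqrt{2}\max(1/\gamma,L_e)/(1/\gamma-\rho)$ follows from $\|\Delta\theta_y\|+\|\Delta\theta_u\|\le\sqrt{2}\,\|\Delta\theta\|$; this is slightly more informative than the lemma, which only asserts existence of $L_{z^*_\gamma}$. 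In short: where the paper defers entirely to an external reference, you supply a self-contained and correct proof consistent with the hypotheses (A1)--(A2), which is arguably a net gain in readability at the cost of a page of computation.
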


We next analyze the stochastic update of the auxiliary lower-level iterate $z^k$. We observe that the objective function $z \mapsto e(z, \theta_u^k) + \frac{1}{2 \gamma} \|z - \theta_y^k\|^2$ in the lower-level proximal problem \eqref{proximalLL} is $(1/\gamma-\rho)$-strongly convex and $(L_{e_y}+1/\gamma)$-smooth. By applying standard convergence analysis techniques for stochastic gradient methods (see, e.g., \cite[section 2.1]{nemirovski2009robust} or \cite[Lemma 3]{chen2021closing}), we can have that the stochastic gradient update for $z^k$ enjoys a contraction property up to a variance term.

\begin{lemma}\label{lem:contraction}
	Suppose that $\gamma \in \bigl(0,\frac{1}{2\rho}\bigr)$ and $\eta_k \in \bigl(0,\frac{2}{L_e+2/\gamma-\rho}\bigr)$. Then the iterates $(\theta_y^k,\theta_u^k,z^k)$ generated by Algorithm~\ref{alg:MEHA} satisfy
	\begin{equation}
		\mathbb{E}\!\left[
		\left.\|z^{k+1}-z^*_{\gamma}(\theta_y^k,\theta_u^k)\|^2
		\,\right|\,\mathcal{F}_k
		\right]
		\le
		\varrho_k^2 \|z^k-z^*_{\gamma}(\theta_y^k,\theta_u^k)\|^2
		+ \eta_k^2 \sigma_e^2,
	\end{equation}
	where $\varrho_k := 1-\eta_k(1/\gamma-\rho)\in(0,1)$.
\end{lemma}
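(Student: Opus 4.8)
The plan is to recognize the $z$-update as one step of (unprojected) stochastic gradient descent on the strongly convex proximal objective, and to split the resulting error into a deterministic contraction plus a stochastic variance term. First, fixing the realization up to $\mathcal{F}_k$, I would treat $\theta_y^k,\theta_u^k$ as deterministic and introduce the proximal objective
\[
G_k(z) := e(z,\theta_u^k) + \tfrac{1}{2\gamma}\|z-\theta_y^k\|^2,
\]
whose unique minimizer is $z_k^* := z^*_{\gamma}(\theta_y^k,\theta_u^k)$, characterized by $\nabla G_k(z_k^*)=0$. As noted just before the lemma, the $\rho$-weak convexity of $e$ together with $\gamma<1/(2\rho)$ makes $G_k$ strongly convex with modulus $\mu := 1/\gamma-\rho>0$, while $L_e$-smoothness of $e$ makes $G_k$ smooth with constant $L := L_{e_y}+1/\gamma$ (with $L_{e_y}\le L_e$). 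The update reads $z^{k+1}=z^k-\eta_k g^k$ with $g^k := h_{e_y,z}^k + \tfrac{1}{\gamma}(z^k-\theta_y^k)$; by (A3) the oracle $h_{e_y,z}^k$ is conditionally unbiased for $\nabla_{\theta_y}e(z^k,\theta_u^k)$ with variance at most $\sigma_e^2$, and the deterministic proximal term adds no noise, so $\mathbb{E}[g^k\mid\mathcal{F}_k]=\nabla G_k(z^k)$ and $\mathbb{E}[\|g^k-\nabla G_k(z^k)\|^2\mid\mathcal{F}_k]\le\sigma_e^2$.

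Next I would decompose the error as
\[
z^{k+1}-z_k^* = \bigl(z^k-\eta_k\nabla G_k(z^k)-z_k^*\bigr) - \eta_k\bigl(g^k-\nabla G_k(z^k)\bigr),
\]
and take the conditional expectation of the squared norm. Since the second term is zero-mean given $\mathcal{F}_k$ and $z_k^*$ is $\mathcal{F}_k$-measurable, the cross term vanishes, leaving
\[
\mathbb{E}\bigl[\|z^{k+1}-z_k^*\|^2 \mid \mathcal{F}_k\bigr] \le \|z^k-\eta_k\nabla G_k(z^k)-z_k^*\|^2 + \eta_k^2\sigma_e^2.
\]
It then remains to bound the deterministic term. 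Writing $T_k(z):=z-\eta_k\nabla G_k(z)$ for the gradient-step map and noting $T_k(z_k^*)=z_k^*$, the standard contraction estimate for a $\mu$-strongly convex, $L$-smooth function gives $\|T_k(z^k)-T_k(z_k^*)\| \le \max\{|1-\eta_k\mu|,\,|1-\eta_k L|\}\,\|z^k-z_k^*\|$. Substituting this factor and identifying $\varrho_k = 1-\eta_k\mu = 1-\eta_k(1/\gamma-\rho)$ yields the claim.

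The step requiring the most care is verifying that the prescribed step-size window makes $1-\eta_k\mu$ the governing contraction factor, i.e. that $|1-\eta_k L|\le 1-\eta_k\mu$ and $\varrho_k\in(0,1)$. I would establish this by relating the hypothesis $\eta_k<2/(L_e+2/\gamma-\rho)$ to the effective constants: since $L_{e_y}\le L_e$ one has $\mu+L = L_{e_y}+2/\gamma-\rho \le L_e+2/\gamma-\rho$, so $\eta_k<2/(\mu+L)$, which is precisely the regime where the gradient map is a $(1-\eta_k\mu)$-contraction; moreover $\mu\le L$ (because $L-\mu = L_{e_y}+\rho\ge 0$) gives $\eta_k<2/(\mu+L)\le 1/\mu$, ensuring $\varrho_k\in(0,1)$. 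Alternatively, one can bypass this short case analysis by invoking the cited stochastic-gradient contraction results directly; the bookkeeping relating the given step-size bound to the effective modulus and smoothness is the only genuinely non-routine part.
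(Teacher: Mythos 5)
Your proposal is correct and matches the paper's approach: the paper proves this lemma precisely by observing that $z\mapsto e(z,\theta_u^k)+\tfrac{1}{2\gamma}\|z-\theta_y^k\|^2$ is $(1/\gamma-\rho)$-strongly convex and $(L_{e_y}+1/\gamma)$-smooth and then invoking the standard stochastic-gradient contraction analysis (citing \cite[section 2.1]{nemirovski2009robust} and \cite[Lemma 3]{chen2021closing}), which is exactly the bias--variance decomposition and gradient-map contraction you carry out explicitly. Your added verification that the hypothesis $\eta_k<2/(L_e+2/\gamma-\rho)$ implies $\eta_k<2/(\mu+L)\le 1/\mu$, so that $\varrho_k=1-\eta_k(1/\gamma-\rho)$ is the governing factor and lies in $(0,1)$, correctly fills in the bookkeeping the paper leaves to the cited references.
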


In practical terms, Lemma~\ref{lem:contraction} says that the lower-level iterate may be inexact, but its inexactness is damped from one iteration to the next and remains proportional to the sampling noise. We next establish a descent estimate for the penalized objective along the iterates.

\begin{lemma}\label{lem:descent}
	Suppose that $\gamma \in \bigl(0,\frac{1}{2\rho}\bigr)$ and that the penalty parameter sequence $\{c_k\}$ is nondecreasing. Then the iterates $(\theta_y^k,\theta_u^k,z^k)$ generated by Algorithm~\ref{alg:MEHA} satisfy
	\begin{equation}\label{lem3eq1}
			{\small
		\begin{aligned}
				&\mathbb{E} \left[ \left.	\phi_{c_k}(\theta^{k+1}) \right| \mathcal{F}_k \right]   \le \,	\phi_{c_k}(\theta^k) - \frac{\alpha_k}{2} \|  \nabla_{\theta_y} \phi_{c_k}( \theta^{k} )\|^2 - \frac{\beta_k}{2} \mathbb{E} \left[ \left. \|  \nabla_{\theta_u} \phi_{c_k}( \theta^{k+\frac{1}{2}} ) \|^2 \right| \mathcal{F}_{k} \right] \\
			 &\qquad\quad- \left( \frac{1}{2\alpha_k} - \frac{L_{\phi}}{2} \right) \|  \mathbb{E} \left[ \left.  \theta_y^{k+1} - \theta_y^k \right| \mathcal{F}_k \right]  \|^2 - \left( \frac{1}{2\beta_k} - \frac{L_{\phi}}{2} \right)  \|  \mathbb{E} \left[ \left. \theta_u^{k+1} - \theta_u^k \right| \mathcal{F}_k \right]  \|^2\\
			 &\qquad\quad+ \frac{\alpha_k}{2\gamma^2} \mathbb{E} \left[ \left.   \left\| z^{k+1} - z^*_{\gamma}(\theta^{k}) \right\|^2 \right| \mathcal{F}_k \right] + \frac{\beta_kL_e ^2}{2}  \mathbb{E} \left[ \left.   \left\| z^{k+1} - z^*_{\gamma}(\theta^{k+\frac{1}{2}}) \right\|^2 \right| \mathcal{F}_{k}  \right] \\
			 & \qquad\quad +	(\alpha_k^2+ 2 \beta_k^2 ) L_{\phi} \left( \sigma_{j}^2/c_0^2 +   \sigma_{e}^2 \right),
		\end{aligned}
	}
	\end{equation}
	where $L_{\phi} := L_j/c_0 + L_e + 1/\gamma$.
\end{lemma}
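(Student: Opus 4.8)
The plan is to prove \eqref{lem3eq1} block by block, following the sequential order of the update (first $z^{k+1}$ and $\theta_y^{k+1}$ from $\mathcal{T}_k$, then $\theta_u^{k+1}$ from $\mathcal{T}_{k+\frac{1}{2}}$), applying in each block the quadratic upper bound furnished by smoothness of $\phi_{c_k}$. First I would record that $\phi_{c_k}$ is $L_\phi$-smooth. Writing $\phi_{c_k} = \tfrac{1}{c_k}(j-\underline{j}) + (e - e_\gamma)$ and using \eqref{eq:gradient}, the claim follows by adding the $L_j$-smoothness of $j$ scaled by $1/c_k \le 1/c_0$ (since $\{c_k\}$ is nondecreasing), the $L_e$-smoothness of $e$, and the fact that $\nabla e_\gamma$ is $1/\gamma$-Lipschitz, which is implied by Lemma~\ref{lem:convex-lgamma} together with the explicit form in \eqref{eq:gradient}. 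This gives the constant $L_\phi = L_j/c_0 + L_e + 1/\gamma$ and the inequality $\phi_{c_k}(\theta') \le \phi_{c_k}(\theta) + \langle \nabla\phi_{c_k}(\theta), \theta'-\theta\rangle + \tfrac{L_\phi}{2}\|\theta'-\theta\|^2$, which I apply to the step $\theta^k \to \theta^{k+\frac{1}{2}}$ and to the step $\theta^{k+\frac{1}{2}} \to \theta^{k+1}$, then sum.

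For the $\theta_y$-block I substitute $\theta_y^{k+1}-\theta_y^k = -\alpha_k d_y^k$, take $\mathbb{E}[\,\cdot\mid\mathcal{F}_k]$, and set $m_y^k := \mathbb{E}[d_y^k\mid\mathcal{F}_k]$. The inner-product term $-\alpha_k\langle \nabla_{\theta_y}\phi_{c_k}(\theta^k), m_y^k\rangle$ is split via the polarization identity $\langle a,b\rangle = \tfrac12(\|a\|^2+\|b\|^2-\|a-b\|^2)$. The key observation is that $m_y^k$ differs from $\nabla_{\theta_y}\phi_{c_k}(\theta^k)$ only through the replacement of $z^*_{\gamma}(\theta^k)$ by $\mathbb{E}[z^{k+1}\mid\mathcal{F}_k]$ in the $\tfrac1\gamma(\theta_y^k-\cdot)$ term of \eqref{eq:gradient}, so $\|\nabla_{\theta_y}\phi_{c_k}(\theta^k)-m_y^k\|^2 = \tfrac{1}{\gamma^2}\|z^*_\gamma(\theta^k)-\mathbb{E}[z^{k+1}\mid\mathcal{F}_k]\|^2$; Jensen's inequality then yields the first $z$-error term $\tfrac{\alpha_k}{2\gamma^2}\mathbb{E}[\|z^{k+1}-z^*_\gamma(\theta^k)\|^2\mid\mathcal{F}_k]$. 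The residual $-\tfrac{\alpha_k}{2}\|m_y^k\|^2$ from polarization combines with the deterministic part of the smoothness term $\tfrac{L_\phi\alpha_k^2}{2}\mathbb{E}[\|d_y^k\|^2\mid\mathcal{F}_k]$; using $\mathbb{E}[\theta_y^{k+1}-\theta_y^k\mid\mathcal{F}_k] = -\alpha_k m_y^k$ these collapse to exactly $-(\tfrac{1}{2\alpha_k}-\tfrac{L_\phi}{2})\|\mathbb{E}[\theta_y^{k+1}-\theta_y^k\mid\mathcal{F}_k]\|^2$. The conditional-variance part of $\mathbb{E}[\|d_y^k\|^2\mid\mathcal{F}_k]$ is controlled by $\|a+b\|^2\le 2\|a\|^2+2\|b\|^2$ and (A3), giving the $\alpha_k^2$ portion of the noise term; the extra fluctuation injected by $z^{k+1}$ is only $O(\eta_k^2\sigma_e^2)$ (the $z$-step is the sole source of randomness in $z^{k+1}$) and, being higher order, is absorbed into the noise/error terms.

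The $\theta_u$-block is parallel but conditions on the finer $\mathcal{F}_{k+\frac{1}{2}}$, so that $\theta_y^{k+1}$ and $z^{k+1}$ are known. With $m_u^k := \mathbb{E}[d_u^k\mid\mathcal{F}_{k+\frac{1}{2}}]$, the discrepancy from the exact gradient $\nabla_{\theta_u}\phi_{c_k}(\theta^{k+\frac{1}{2}})$ is now $\nabla_{\theta_u}e(z^{k+1},\theta_u^k)-\nabla_{\theta_u}e(z^*_\gamma(\theta^{k+\frac{1}{2}}),\theta_u^k)$, whose norm is bounded by $L_e\|z^{k+1}-z^*_\gamma(\theta^{k+\frac{1}{2}})\|$ through $L_e$-smoothness of $e$; this produces the second $z$-error term with coefficient $L_e^2$ in place of $1/\gamma^2$. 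I then pass from $\mathcal{F}_{k+\frac{1}{2}}$ to $\mathcal{F}_k$ by the tower property, and downgrade the conditional-mean term using $\mathbb{E}[\|\mathbb{E}[\,\cdot\mid\mathcal{F}_{k+\frac{1}{2}}]\|^2\mid\mathcal{F}_k]\ge \|\mathbb{E}[\,\cdot\mid\mathcal{F}_k]\|^2$ together with the negativity of the coefficient $\tfrac{1}{2\beta_k}-\tfrac{L_\phi}{2}$ for $\beta_0$ small; this gives the $\theta_u$ conditional-mean term and the descent term $-\tfrac{\beta_k}{2}\mathbb{E}[\|\nabla_{\theta_u}\phi_{c_k}(\theta^{k+\frac{1}{2}})\|^2\mid\mathcal{F}_k]$. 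Summing the two blocks and collecting the stochastic fluctuations as $(\alpha_k^2+2\beta_k^2)L_\phi(\sigma_j^2/c_0^2+\sigma_e^2)$ yields \eqref{lem3eq1}.

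The main obstacle is the bookkeeping around substituting the auxiliary iterate $z^{k+1}$ for the exact proximal point $z^*_\gamma$: it must be tracked separately in the two blocks because it enters the $\theta_y$-gradient linearly through $\tfrac1\gamma(\theta_y-z)$ (coefficient $1/\gamma^2$) but the $\theta_u$-gradient through $\nabla_{\theta_u}e(z,\cdot)$ (coefficient $L_e^2$), and because the two blocks are conditioned on different $\sigma$-algebras. Retaining the conditional means $\mathbb{E}[\theta_w^{k+1}-\theta_w^k\mid\mathcal{F}_k]$ explicitly—rather than discarding these negative terms immediately—is precisely what allows the later telescoping in Theorem~\ref{thm:convergence} to control the accumulated update magnitudes; getting the signs, the polarization split, and the two Jensen steps to line up so that these terms survive with exactly the stated coefficients is the delicate part.
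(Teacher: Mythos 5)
Your proposal is correct and essentially reproduces the paper's own proof: the same $L_\phi$-smoothness bound \eqref{lem6_eq1} with $L_\phi=L_j/c_0+L_e+1/\gamma$, the same polarization split of the inner-product terms, the same two bias bounds (coefficient $\tfrac{1}{\gamma^2}$ via the gradient formula \eqref{eq:gradient} plus Jensen in the $\theta_y$-block, and $L_e^2$ via the Lipschitz continuity of $\nabla_{\theta_u}e$ in the $\theta_u$-block), and the same tower-property and variance-decomposition bookkeeping leading to the noise term $(\alpha_k^2+2\beta_k^2)L_\phi(\sigma_j^2/c_0^2+\sigma_e^2)$. Two harmless imprecisions worth noting: the $1/\gamma$ contribution to $L_\phi$ comes from the $1/\gamma$-weak convexity of $e_\gamma$ (Lemma~\ref{lem:convex-lgamma}), i.e.\ the one-sided quadratic upper bound on $-e_\gamma$, not from a global $1/\gamma$-Lipschitz gradient of $e_\gamma$ (which fails for the $\theta_u$-component, whose modulus involves $L_e L_{z_\gamma^*}$); and the Jensen downgrade $\mathbb{E}\bigl[\|\mathbb{E}[d_u^k\mid\mathcal{F}_{k+\frac{1}{2}}]\|^2\mid\mathcal{F}_k\bigr]\ge\|\mathbb{E}[d_u^k\mid\mathcal{F}_k]\|^2$ needs only the always-negative prefactor $-\beta_k/2$ when applied before merging with the $L_\phi/2$ term (as in \eqref{lem6_eq9}), so your extra smallness condition on $\beta_0$ is unnecessary for the lemma as stated, though it is assumed anyway where the lemma is used in Theorem~\ref{thm:convergence}.
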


\begin{proof}
	Since $j$ and $e$ are $L_j$- and $L_e$-smooth, respectively, and noting that $c_k \ge c_0$ for all $k$, we invoke the convexity of $ e_\gamma(\theta) + \tfrac{1}{2\gamma}\|\theta\|^2$ (Lemma \ref{lem:convex-lgamma}) to obtain that
	\begin{equation}\label{lem6_eq1}
		\phi_{c_k}( \theta'' ) \le \phi_{c_k}( \theta') + \langle \nabla \phi_{c_k}( \theta'), \theta'' - \theta' \rangle + \frac{L_{\phi}}{2} \left\| \theta'' - \theta' \right \|^2, \quad  \forall \theta', \theta'',
	\end{equation}
	with $L_{\phi} := L_j/c_0 + L_e + 1/\gamma$. Setting $\theta'' = \theta^{k+\frac{1}{2}} $ and $\theta' = \theta^{k} $ in \eqref{lem6_eq1} and taking the conditional expectation with respect to $\mathcal{F}_k$ yields
	\begin{equation}\label{lem6_eq2}
			{\small
	\begin{aligned}
		\mathbb{E} \left[ \left. \phi_{c_k}( \theta^{k+\frac{1}{2}} ) \right| \mathcal{F}_k \right] \le \phi_{c_k}( \theta^{k} ) + \mathbb{E} \left[ \left. \langle \nabla_{\theta_y} \phi_{c_k}( \theta^{k}), \theta_y^{k+1} - \theta_y^{k} \rangle \right| \mathcal{F}_k \right] + \frac{L_{\phi}}{2} \mathbb{E} \left[ \left. \| \theta_y^{k+1} - \theta_y^k \|^2 \right| \mathcal{F}_k \right].
	\end{aligned}
}
	\end{equation}
	Using the update rules $\theta_y^{k+1} - \theta_y^k = -\alpha_k d_y^k$, we expand the inner product term as 
	\begin{equation}\label{lem6_eq3}
			{\small
		\begin{aligned}
			&2\mathbb{E} \left[ \left. \langle \nabla_{\theta_y} \phi_{c_k}( \theta^{k}), \theta_y^{k+1} - \theta_y^{k} \rangle \right| \mathcal{F}_{k} \right] \\
			=\, & - 2\alpha_k \langle \nabla_{\theta_y} \phi_{c_k}( \theta^{k} ), \mathbb{E} \left[ \left. d_y^k \right| \mathcal{F}_{k} \right] \rangle \\
			=\, &\alpha_k \left\| \nabla_{\theta_y} \phi_{c_k}( \theta^{k} ) - \mathbb{E} \left[ \left. d_y^k \right| \mathcal{F}_{k} \right] \right\|^2 - \alpha_k \left\| \nabla_{\theta_y} \phi_{c_k}( \theta^{k} ) \right\|^2 - \alpha_k \left\| \mathbb{E} \left[ \left. d_y^k \right| \mathcal{F}_{k} \right] \right\|^2.
		\end{aligned}
	}
	\end{equation}
	Next, utilizing the expression for $\nabla e_\gamma({x},{y})$ from \eqref{eq:gradient}, the definition of $d_y^k$, and the unbiasedness of the stochastic estimators $h_{j_y}^k$ and $h_{e_y}^k$, we have
	\[
	\mathbb{E} \left[ \left. d_y^k \right| \mathcal{F}_{k} \right] = \nabla_{\theta_y} \phi_{c_k}( \theta^{k} ) + \frac{1}{\gamma} \mathbb{E} \left[ \left. z^{k+1} - z^*_{\gamma}(\theta^{k}) \right| \mathcal{F}_{k} \right].
	\]
Applying Jensen's inequality provides the bound
	\begin{equation}\label{lem6_eq4}
		\left\| \nabla_{\theta_y} \phi_{c_k}( \theta^{k} ) - \mathbb{E} \left[ \left. d_y^k \right| \mathcal{F}_{k} \right] \right\|^2 \le \frac{1}{\gamma^2} \mathbb{E} \left[ \left. \left\| z^{k+1} - z^*_{\gamma}(\theta^{k}) \right\|^2 \right| \mathcal{F}_{k} \right].
	\end{equation}
	Using the variance decomposition $\mathbb{E}[\|X\|^2] = \|\mathbb{E}[X]\|^2 + \text{Var}(X)$ alongside the bounded variance assumptions for $h_{j_y}^k$ and $ h_{e_y}^k$, we have
	\begin{equation}\label{lem6_eq6}
		\mathbb{E} \left[ \left. \| \theta_y^{k+1} - \theta_y^k  \|^2 \right| \mathcal{F}_k \right] \le \| \mathbb{E} \left[ \left. \theta_y^{k+1} - \theta_y^k  \right| \mathcal{F}_k \right] \|^2 + 2\alpha_k^2 \left( \sigma_{j}^2/c_0^2 +   \sigma_{e}^2 \right).
	\end{equation}
	Combining inequalities \eqref{lem6_eq3}, \eqref{lem6_eq4} and \eqref{lem6_eq6} with \eqref{lem6_eq2}, and recalling that $\theta_y^{k+1} - \theta_y^k = -\alpha_k d_y^k$, we obtain 
	\begin{equation}\label{lem6_eq11}
			{\small
		\begin{aligned}
				\mathbb{E} \left[ \left. \phi_{c_k}( \theta^{k+\frac{1}{2}} ) \right| \mathcal{F}_k \right] \le \,  &\phi_{c_k}( \theta^{k} ) - \frac{\alpha_k}{2} \|  \nabla_{\theta_y} \phi_{c_k}( \theta^{k} )\|^2  - \left( \frac{1}{2\alpha_k} - \frac{L_{\phi}}{2} \right) \|  \mathbb{E} \left[ \left.  \theta_y^{k+1} - \theta_y^k \right| \mathcal{F}_k \right]  \|^2 \\
			&+  \frac{\alpha_k}{2\gamma^2}  \mathbb{E} \left[ \left.   \left\| z^{k+1} - z^*_{\gamma}(\theta^{k}) \right\|^2 \right| \mathcal{F}_k \right] +	\alpha_k^2L_{\phi} \left( \sigma_{j}^2/c_0^2 +   \sigma_{e}^2 \right),
		\end{aligned}
	}
	\end{equation}
	Next, setting $\theta'' = \theta^{k + 1} $ and $\theta' =  \theta^{k+\frac{1}{2}} $ in \eqref{lem6_eq1} and taking the conditional expectation with respect to $\mathcal{F}_{k}$, we have
	\begin{equation}\label{lem6_eq7}
			{\small
	\begin{aligned}
		\mathbb{E} \left[ \left. \phi_{c_k}( \theta^{k+1} ) \right| \mathcal{F}_{k} \right] \le \, & \mathbb{E} \left[ \left.  \phi_{c_k}( \theta^{k+\frac{1}{2}} )  \right| \mathcal{F}_{k} \right]  + \mathbb{E} \left[ \left. \langle \nabla_{\theta_u} \phi_{c_k}( \theta^{k+\frac{1}{2}}), \theta_u^{k+1} - \theta_u^{k} \rangle \right| \mathcal{F}_{k} \right] \\ &+ \frac{L_{\phi}}{2} \mathbb{E} \left[ \left. \| \theta_u^{k+1} - \theta_u^k \|^2 \right| \mathcal{F}_{k} \right].
	\end{aligned}
}
    \end{equation}	
Replacing $\theta_u^{k+1} - \theta_u^k = -\beta_k d_u^k$ in the inner product term gives
	\begin{equation}\label{lem6_eq8}
			{\small
		\begin{aligned}
			&2\mathbb{E} \left[ \left. \langle \nabla_{\theta_u} \phi_{c_k}( \theta^{k+\frac{1}{2}}), \theta_u^{k+1} - \theta_u^{k} \rangle \right| \mathcal{F}_{k+\frac{1}{2}} \right] \\
			=\, & \beta_k \left\| \nabla_{\theta_u} \phi_{c_k}( \theta^{k+\frac{1}{2}} ) - \mathbb{E} \left[ \left. d_u^k \right| \mathcal{F}_{k+\frac{1}{2}} \right] \right\|^2 - \beta_k \left\| \nabla_{\theta_u} \phi_{c_k}( \theta^{k+\frac{1}{2}} ) \right\|^2 - \beta_k \left\| \mathbb{E} \left[ \left. d_u^k \right| \mathcal{F}_{k+\frac{1}{2}} \right] \right\|^2.
		\end{aligned}
	}
	\end{equation}
	From the definition of $d_u^k$, the unbiasedness of the stochastic estimators $h_{j_u}^k$, $h_{e_u}^k$, and $h_{e_{u},z}^k$, and the $L_e$-Lipschitz continuity of $\nabla_{\theta_u} e$, we obtain
	\begin{equation}\label{lem6_eq5}
			{\small
		\begin{aligned}
			\left\| \nabla_{\theta_u} \phi_{c_k}( \theta^{k+\frac{1}{2}} ) - \mathbb{E} \left[ \left. d_u^k \right| \mathcal{F}_{k+\frac{1}{2}} \right] \right\|^2
			\le {L_e^2} \| z^{k+1} - z^*_{\gamma}(\theta^{k+\frac{1}{2}})\|^2.
		\end{aligned}
	}
	\end{equation}
	Taking the conditional expectation of \eqref{lem6_eq8} with respect to $\mathcal{F}_{k}$, applying \eqref{lem6_eq5} and Jensen's inequality ($\| \mathbb{E} [d_u^k | \mathcal{F}_{k} ] \|^2 = \| \mathbb{E} [ \mathbb{E} [d_u^k | \mathcal{F}_{k+\frac{1}{2}} ] | \mathcal{F}_{k}  ] \|^2 \le \mathbb{E} [  \| \mathbb{E} [d_u^k | \mathcal{F}_{k+\frac{1}{2}} ] \|^2 |  \mathcal{F}_{k}  ]$ ), we have	
	\begin{equation}\label{lem6_eq9}
			{\small
		\begin{aligned}
			& 2\mathbb{E} \left[ \left. \langle \nabla_{\theta_u} \phi_{c_k}( \theta^{k+\frac{1}{2}}), \theta_u^{k+1} - \theta_u^{k} \rangle \right| \mathcal{F}_{k} \right] 
		 = 	2\mathbb{E} \left[ \left.  \mathbb{E} \left[ \left. \langle \nabla_{\theta_u} \phi_{c_k}( \theta^{k+\frac{1}{2}}), \theta_u^{k+1} - \theta_u^{k} \rangle \right| \mathcal{F}_{k+\frac{1}{2}} \right] \right| \mathcal{F}_{k} \right]  \\
		\le \, &{\beta_k L_e^2}\mathbb{E} \left[ \left. \| z^{k+1} - z^*_{\gamma}(\theta^{k+\frac{1}{2}})\|^2 \right| \mathcal{F}_{k} \right] - \beta_k  \mathbb{E} \left[ \left. \|  \nabla_{\theta_u} \phi_{c_k}( \theta^{k+\frac{1}{2}} ) \|^2 \right| \mathcal{F}_{k} \right] - \beta_k \| \mathbb{E} [d_u^k | \mathcal{F}_{k} ] \|^2.
		\end{aligned}
	}
	\end{equation}
Substituting \eqref{lem6_eq9} into \eqref{lem6_eq7}, using $\theta_u^{k+1} - \theta_u^k = -\beta_k d_u^k$, and again employing the variance decomposition together with the bounded variance assumptions for $h_{j_u}^k$, $ h_{e_u}^k$ and $h_{e_{u},z}^k$, we have
	\begin{equation}\label{lem6_eq10}
		{\small
		\begin{split}
	\mathbb{E} \left[ \left. \phi_{c_k}( \theta^{k+1} ) \right| \mathcal{F}_{k} \right] \le \, & \mathbb{E} \left[ \left.  \phi_{c_k}( \theta^{k+\frac{1}{2}} )  \right| \mathcal{F}_{k} \right]  - \frac{\beta_k}{2} \mathbb{E} \left[ \left. \|  \nabla_{\theta_u} \phi_{c_k}( \theta^{k+\frac{1}{2}} ) \|^2 \right| \mathcal{F}_{k} \right]  \\
& - \left( \frac{1}{2\beta_k} - \frac{L_{\phi}}{2} \right)  \|  \mathbb{E} \left[ \left. \theta_u^{k+1} - \theta_u^k \right| \mathcal{F}_{k}  \right]  \|^2\\
&+ \frac{\beta_kL_e ^2}{2}  \mathbb{E} \left[ \left.   \left\| z^{k+1} - z^*_{\gamma}(\theta^{k+\frac{1}{2}}) \right\|^2 \right| \mathcal{F}_{k}  \right] + 2 \beta_k^2 L_{\phi} \left( \sigma_{j}^2/c_0^2 +   \sigma_{e}^2 \right).
\end{split}
}
	\end{equation}
The desired inequality \eqref{lem3eq1} follows by combining \eqref{lem6_eq11} and \eqref{lem6_eq10}.
\end{proof}

\subsection{Proof of Theorem \ref{thm:convergence}}

With these auxiliary results in place, we are now ready to present the proof of Theorem~\ref{thm:convergence}.

\begin{proof}[Proof of Theorem \ref{thm:convergence}]
	Since $c_{k+1} \ge c_k$, we have that $( j( \theta^{k+1}) - \underline{j} )/c_{k+1} \le ( j( \theta^{k+1}) - \underline{j} ) /c_{k}$. Combining this property with \eqref{lem3eq1} established in Lemma \ref{lem:descent} yields	
	\begin{equation}\label{lem9_eq4}
		{\small
		\begin{aligned}
			&\mathbb{E} \left[ \left.	V_{k+1} \right| \mathcal{F}_k \right]  - V_k 
			\le \,   - \frac{\alpha_k}{2} \|  \nabla_{\theta_y} \phi_{c_k}( \theta^{k} )\|^2 - \frac{\beta_k}{2} \mathbb{E} \left[ \left. \|  \nabla_{\theta_u} \phi_{c_k}( \theta^{k+\frac{1}{2}} ) \|^2 \right| \mathcal{F}_{k} \right] \\  
			&\qquad- \left( \frac{1}{2\alpha_k} - \frac{L_{\phi}}{2} \right) \|  \mathbb{E} \left[ \left.  \theta_y^{k+1} - \theta_y^k \right| \mathcal{F}_k \right]  \|^2
			- \left( \frac{1}{2\beta_k} - \frac{L_{\phi}}{2} \right)  \|  \mathbb{E} \left[ \left. \theta_u^{k+1} - \theta_u^k \right| \mathcal{F}_k \right]  \|^2\\
			 &\qquad+C_z \mathbb{E} \left[ \left.	 \left\| z^{k+1} - z^*_{\gamma}(\theta^{k+1}) \right\|^2 \right| \mathcal{F}_k \right] + \frac{\alpha_k}{2\gamma^2} \mathbb{E} \left[ \left.   \left\| z^{k+1} - z^*_{\gamma}(\theta^{k}) \right\|^2 \right| \mathcal{F}_k \right]  \\ 
			 &\qquad + \frac{\beta_kL_e ^2}{2}  \mathbb{E} \left[ \left.   \left\| z^{k+1} - z^*_{\gamma}(\theta^{k+\frac{1}{2}}) \right\|^2 \right| \mathcal{F}_{k}  \right]  - C_z  \left\| z^{k} - z^*_{\gamma}(\theta^{k}) \right\|^2 + 	(\alpha_k^2+ 2\beta_k^2 ) L_{\phi} \left( \sigma_{j}^2/c_0^2 +   \sigma_{e}^2 \right).
		\end{aligned}
	}
	\end{equation}
	 By Lemma~\ref{lem:convex-lgamma}, $z^*_{\gamma}(\cdot)$ is $L_{z_\gamma^*}$-Lipschitz continuous. Applying the inequality $\|a+b\|^2 \le (1+\epsilon)\|a\|^2 + (1+\epsilon^{-1})\|b\|^2$ for any $\epsilon_k > 0$ and $c_z > 0$, we have
	\begin{equation*}
		{\small
		\begin{split}
			& \left\| z^{k+1} - z^*_{\gamma}(\theta^{k+1}) \right\|^2 + c_z \left\| z^{k+1} - z^*_{\gamma}(\theta^{k}) \right\|^2 + c_z \left\| z^{k+1} - z^*_{\gamma}(\theta^{k+\frac{1}{2}}) \right\|^2 - \left\| z^{k} - z^*_{\gamma}(\theta^{k}) \right\|^2 \\
		\le \, & (1+\epsilon_{k} )\left\| z^{k+1} - z^*_{\gamma}(\theta^{k}) \right\|^2 + \left(1+ \frac{1}{\epsilon_{ k} }\right) \|  z^*_{\gamma}(\theta^{k+1}) - z^*_{\gamma}(\theta^{k}) \|^2 + c_z \left\| z^{k+1} - z^*_{\gamma}(\theta^{k}) \right\|^2 \\
		& + 2c_z \left\| z^{k+1} - z^*_{\gamma}(\theta^{k}) \right\|^2 + 2c_z \|  z^*_{\gamma}(\theta^{k+\frac{1}{2}}) - z^*_{\gamma}(\theta^{k}) \|^2  - \left\| z^{k} - z^*_{\gamma}(\theta^{k}) \right\|^2 \\
			\le \, & (1 + \epsilon_{k} + 3c_z )\left\| z^{k+1} - z^*_{\gamma}(\theta^{k}) \right\|^2 + \left(1 + 2c_z + \frac{1}{ \epsilon_{k} } \right)L_{z_\gamma^*}^2 \| \theta^{k+1} - \theta^{k} \|^2 - \left\| z^{k} - z^*_{\gamma}(\theta^{k}) \right\|^2.
		\end{split}
	}
	\end{equation*}
	Taking the conditional expectation and applying the contraction result of Lemma~\ref{lem:contraction} to $\mathbb{E}[\| z^{k+1} - z^*_{\gamma}(\theta^{k}) \|^2 \mid \mathcal{F}_k]$ (valid for $\eta_k \in (0, \frac{2}{L_e+2/\gamma-\rho})$), we obtain
	\begin{equation*}
		{\small
		\begin{aligned}
			& \mathbb{E} \left[ \left. \left\| z^{k+1} - z^*_{\gamma}(\theta^{k+1}) \right\|^2 \right| \mathcal{F}_k \right]+ c_z \mathbb{E} \left[ \left. \left\| z^{k+1} - z^*_{\gamma}(\theta^{k}) \right\|^2 \right| \mathcal{F}_k \right] \\
			&\qquad\qquad\qquad\qquad+ c_z \mathbb{E} \left[ \left.   \left\| z^{k+1} - z^*_{\gamma}(\theta^{k+\frac{1}{2}}) \right\|^2 \right| \mathcal{F}_{k}  \right]  - \left\| z^{k} - z^*_{\gamma}(\theta^{k}) \right\|^2 \\
			\le \, & (1+\epsilon_k+ 3c_z ) \varrho_k^2 \| z^k -z^*_{\gamma}(\theta^k) \|^2 - \left\| z^{k} - z^*_{\gamma}(\theta^{k}) \right\|^2 \\
			& + \left(1+ 2c_z +  \frac{1}{\epsilon_k}\right)L_{z_\gamma^*}^2\mathbb{E} \left[ \left. \| \theta^{k+1} - \theta^{k} \|^2 \right| \mathcal{F}_k \right] + (1+\epsilon_k+ 3c_z )\eta_k^2 \sigma_{e}^2,
		\end{aligned}
	}
	\end{equation*}
	where $\varrho_k := 1-\eta_k\left(1/{\gamma}-\rho\right) < 1$. 
	Setting $\epsilon_k = \eta_k(1/{\gamma}-\rho)/2$ and restricting $c_z \le \eta_k(1/{\gamma}-\rho)/6$ simplifies the inequality to
	\begin{equation}\label{lem9_eq5}
		{\small
		\begin{split}
			& \mathbb{E} \left[ \left. \left\| z^{k+1} - z^*_{\gamma}(\theta^{k+1}) \right\|^2 \right| \mathcal{F}_k \right]+ c_z \mathbb{E} \left[ \left. \left\| z^{k+1} - z^*_{\gamma}(\theta^{k}) \right\|^2 \right| \mathcal{F}_k \right] \\
			&\qquad\qquad\qquad\qquad\qquad+ c_z \mathbb{E} \left[ \left.   \left\| z^{k+1} - z^*_{\gamma}(\theta^{k+\frac{1}{2}}) \right\|^2 \right| \mathcal{F}_{k}  \right]  - \left\| z^{k} - z^*_{\gamma}(\theta^{k}) \right\|^2 \\
			\le \, & -\eta_k\left(1/{\gamma}-\rho\right) \left\| z^{k} - z^*_{\gamma}(\theta^{k}) \right\|^2 + \left(1 +\eta_k(1/{\gamma}-\rho) + \frac{2}{\eta_k(1/{\gamma}-\rho) }\right)L_{z_\gamma^*}^2\mathbb{E} \left[ \left. \| \theta^{k+1} - \theta^{k} \|^2 \right| \mathcal{F}_k \right] \\
			& + \left(1+\eta_k(1/{\gamma}-\rho) \right)\eta_k^2 \sigma_{e}^2.
		\end{split}
	}
	\end{equation}
	If the step sizes satisfy $\alpha_k \le \frac{(\gamma - \gamma^2 \rho)C_z}{6} \eta_k$ and $\beta_k \le \frac{(1/\gamma -\rho)C_z}{6L_e^2} \eta_k$, then $\frac{1}{C_z}\left( \frac{\alpha_k}{2\gamma^2} + \frac{\beta_kL_e ^2}{2} \right) \le \eta_k(1/{\gamma}-\rho) /6$. Setting $c_z =\frac{1}{C_z}\left( \frac{\alpha_k}{2\gamma^2} + \frac{\beta_kL_e ^2}{2} \right)$ and combining \eqref{lem9_eq5} with the variance bounds \eqref{lem6_eq6} and the descent inequality \eqref{lem9_eq4} yields
	\begin{equation}\label{lem9_eq6}
{\small
		\begin{split}
			&\mathbb{E} \left[ V_{k+1} \mid \mathcal{F}_k \right] - V_k\\
			\le \, & - \frac{\alpha_k}{2} \| \nabla_{\theta_y} \phi_{c_k}( \theta^{k} )\|^2 - \frac{\beta_k}{2} \mathbb{E} \left[ \left. \|  \nabla_{\theta_u} \phi_{c_k}( \theta^{k+\frac{1}{2}} ) \|^2 \right| \mathcal{F}_{k} \right] \\ 
			& - \left( \frac{1}{2\alpha_k} - \frac{L_{\phi}}{2} - C_z(1 +\eta_k(1/{\gamma}-\rho))L_{z_\gamma^*}^2 - \frac{2C_zL_{z_\gamma^*}^2}{\eta_k(1/{\gamma}-\rho)} \right) \| \mathbb{E} \left[ \theta_y^{k+1} - \theta_y^k \mid \mathcal{F}_k \right] \|^2 \\
			& - \left( \frac{1}{2\beta_k} - \frac{L_{\phi}}{2} - C_z(1 +\eta_k(1/{\gamma}-\rho) )L_{z_\gamma^*}^2 - \frac{2C_zL_{z_\gamma^*}^2}{\eta_k(1/{\gamma}-\rho)} \right) \| \mathbb{E} \left[ \theta_u^{k+1} - \theta_u^k \mid \mathcal{F}_k \right] \|^2 \\
			& -\eta_k\left(1/{\gamma}-\rho\right) C_z \left\| z^{k} - z^*_{\gamma}(\theta^{k}) \right\|^2 + \left(1+\eta_k(1/{\gamma}-\rho) \right)\eta_k^2 C_z\sigma_{\ell_{y}}^2 \\
			& + (\alpha_k^2+ 2 \beta_k^2 ) \left( L_{\phi} + C_z(1 +\eta_k(1/{\gamma}-\rho) )L_{z_\gamma^*}^2 + \frac{2C_zL_{z_\gamma^*}^2}{\eta_k(1/{\gamma}-\rho)} \right)\left( \sigma_{j}^2/c_0^2 + \sigma_{e}^2 \right).
		\end{split}
	}
	\end{equation}
	
	The step size strategy $\alpha_k = \alpha_0 (k+1)^{-p}$, $\beta_k = \beta_0 (k+1)^{-p}$ and $\eta_k = \eta_0(k+1)^{-q}$, with $p \in ((q+1)/2, 1)$ and $q \in (1/2,1)$, ensures the required bounds hold. Specifically, if $\eta_0 \in (0, \frac{2}{L_{e} + 2/\gamma - \rho} )$, then $\eta_k$ remains within this interval for all $k$. Furthermore, by choosing $\alpha_0$ and $\beta_0$ sufficiently small relative to $\eta_0$, we ensure that the terms $\frac{1}{2\alpha_k}$ and $\frac{1}{2\beta_k}$ dominate the negative components in the coefficients of the squared differences  $\| \mathbb{E} \left[ \theta_y^{k+1} - \theta_y^k \mid \mathcal{F}_k \right] \|^2 $ and $\| \mathbb{E} \left[ \theta_u^{k+1} - \theta_u^k \mid \mathcal{F}_k \right] \|^2 $, guaranteeing their non-negativity for all $k$. Taking the total expectation of \eqref{lem9_eq6}  implies \eqref{thmeq1}.
	
	Summing the total expectation of \eqref{lem9_eq6} from $k=0$ to $K$, we utilize the fact that the series $\sum \alpha_k^2, \sum \beta_k^2, \sum \eta_k^2, \sum \alpha_k^2/\eta_k$, and $\sum \beta_k^2/\eta_k$ are all convergent (finite). Thus, there exists a constant $M_V > 0$ such that for any $K > 0$,
	\begin{equation*}
		\mathbb{E} \left[V_{K + 1} \right] + \sum_{k=0}^K \frac{\alpha_k}{2} \mathbb{E} \left[ \| \nabla_{\theta_y} \phi_{c_k}( \theta^{k} )\|^2 \right] + \sum_{k=0}^K \frac{\beta_k}{2} \mathbb{E} \left[ \| \nabla_{\theta_u} \phi_{c_k}( \theta^{k+\frac{1}{2}} )\|^2 \right] \le V_{0} + M_V.
	\end{equation*}
	Taking the limit as $K \rightarrow \infty$ implies
	\[
	\sum_{k=0}^\infty \alpha_k \mathbb{E} \left[ \| \nabla_{\theta_y} \phi_{c_k}( \theta^{k} )\|^2 \right]  + \sum_{k=0}^\infty \beta_k \mathbb{E} \left[ \| \nabla_{\theta_u} \phi_{c_k}( \theta^{k+\frac{1}{2}} )\|^2 \right] < \infty.
	\]
Finally, given the forms of $\alpha_k$ and $\beta_k$, the minimum gradient norm converges at the rate $\min_{0\le k \le K} \left\{ \mathbb{E} \left[ \|\nabla_{\theta_y} \phi_{c_k}( \theta^{k} )\|^2 +   \| \nabla_{\theta_u} \phi_{c_k}( \theta^{k+\frac{1}{2}} )\|^2 \right] \right\} = \mathcal{O}\left( \frac{1}{K^{(1-p)}}\right)$.
\end{proof}


\section{Numerical Results} \label{se:experiments}
In this section, we present some numerical results of Algorithm~\ref{alg:Bilevel Deep Learning Method} for solving various  problems in the form of \eqref{eq:distribution} to validate its effectiveness and efficiency. All codes were written in Python and PyTorch. The numerical experiments were
conducted on  a server
provisioned with dual Intel Xeon Gold 5218R CPUs (a total of 40 cores/80 threads, with 2.1-4.0
GHz) and an NVIDIA H100 GPU.

The state \(y\) and the control \(u\) are approximated by ResNets with three residual blocks and a linear output layer, augmented with the constraint-embedding layers from section~\ref{subsection:resnets enbedding}. Each residual block contains two fully connected layers with 16 neurons each. All networks employ the Swish activation function and Xavier uniform initialization with zero biases. Unless stated otherwise, the learning rates are set to \(\alpha=\beta=\eta=10^{-3}\) and decayed by a factor of 0.8 every 1{,}000 epochs. The mini-batch size is set as \(m=512\).

\medskip

\noindent \textbf{Example 1 (Bi-active set).} Let \(\Omega=(0,1)^2\), \(\sigma=1\), \(Y_{ad}=\{y\in H_0^1(\Omega)\mid y\ge0\text{ a.e.\ in }\Omega\}\),
and

\[
\begin{aligned}
y^{\dagger}(x_1,x_2)&=
\begin{cases}
  160\,C\bigl(x_1^3 - x_1^2 + \tfrac14 x_1\bigr)
     \bigl(x_2^3 - x_2^2 + \tfrac14 x_2\bigr)
  &0<x_1,x_2<0.5,\\[6pt]
  0&\text{else}.
\end{cases} \\
\xi^{\dagger}(x_1,x_2)&=\max\bigl(0,\,-2|x_1-0.8|-2|x_1x_2-0.3|+0.5\bigr).\\
  f&=-\Delta y^{\dagger}-y^{\dagger}-\xi^{\dagger},
\qquad
y_d=y^{\dagger}+\xi^{\dagger}-\Delta y^{\dagger}.
\end{aligned}
\]

This example features a nontrivial bi-active set (where \( y = \xi = 0 \)), which makes accurate detection of the active set challenging for numerical methods.  Compared to  \cite{hintermuller2011smooth},
we add a scaling constant \(C=20\) in \(y^\dagger\) for normalization. The analytical solution is \(u^*=y^*=y^\dagger\), which is used to evaluate the numerical accuracy. For the implementation of Algorithm~\ref{alg:Bilevel Deep Learning Method}, we use the NNs $\hat{y}$ and $\hat{u}$ given by \eqref{eq:state_embedding}-\eqref{eq:control_embedding}  and set
$
T = 20,000,
\gamma = 20,$ and $c_k = 5 k^{0.3}.
$

We first set  \(U_{ad}=L^2(\Omega)\) and the results of Algorithm \ref{alg:Bilevel Deep Learning Method} are shown in Figures \ref{fig:training_ex1}-\ref{fig:meha_2}. We show the training trajectories and then compare the computed state \(y\) and control \(u\) against their exact counterparts and plot the pointwise errors.

\begin{figure}[htpb]
	\centering
	\subfloat[Stage 1: Bilevel Training Process]{%
		\includegraphics[width=0.5\textwidth]{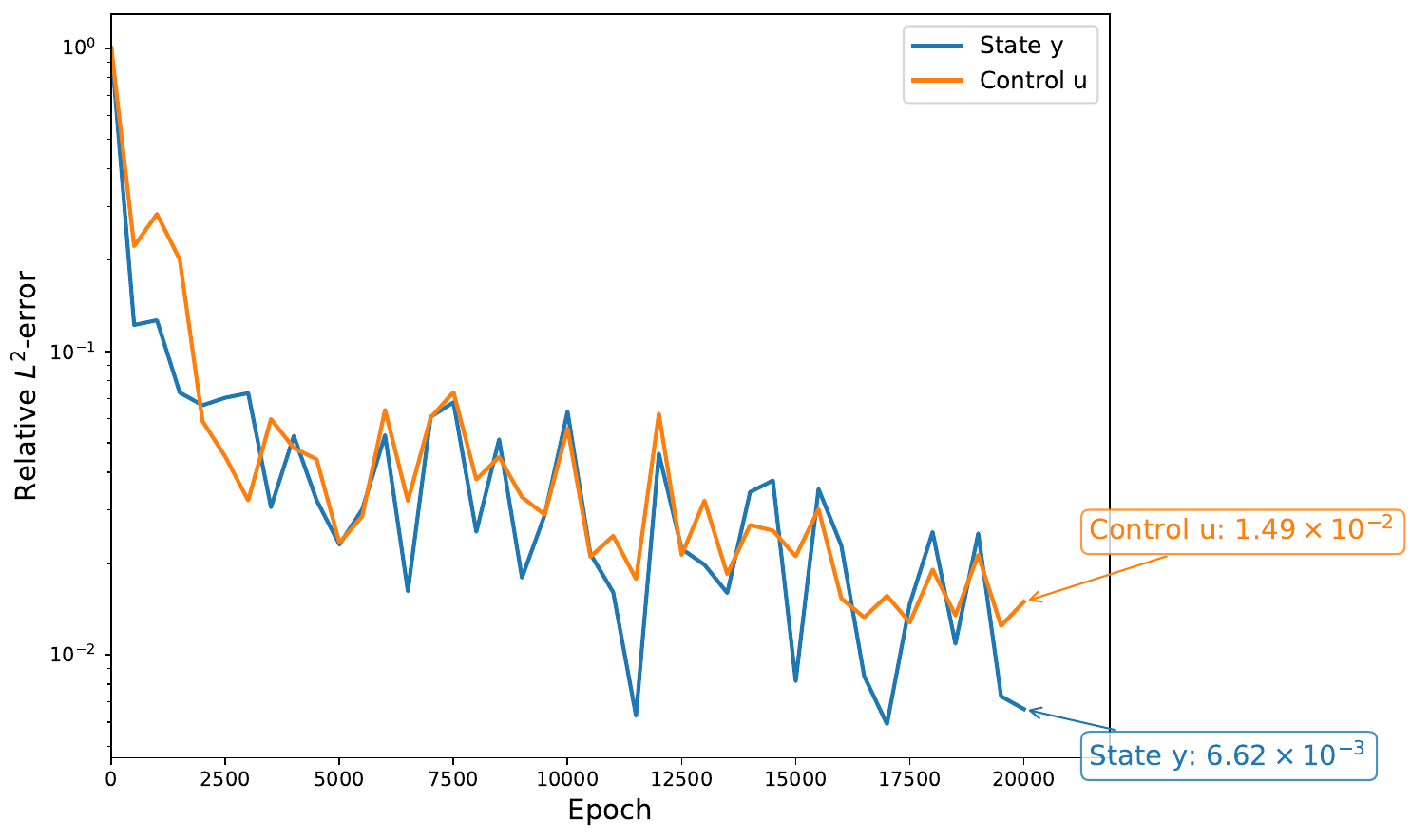}%
		
	}
	\hfill
	\subfloat[Stage 2:  Lower-Level Optimality Improvement Procedure]{%
		\includegraphics[width=0.5\textwidth]{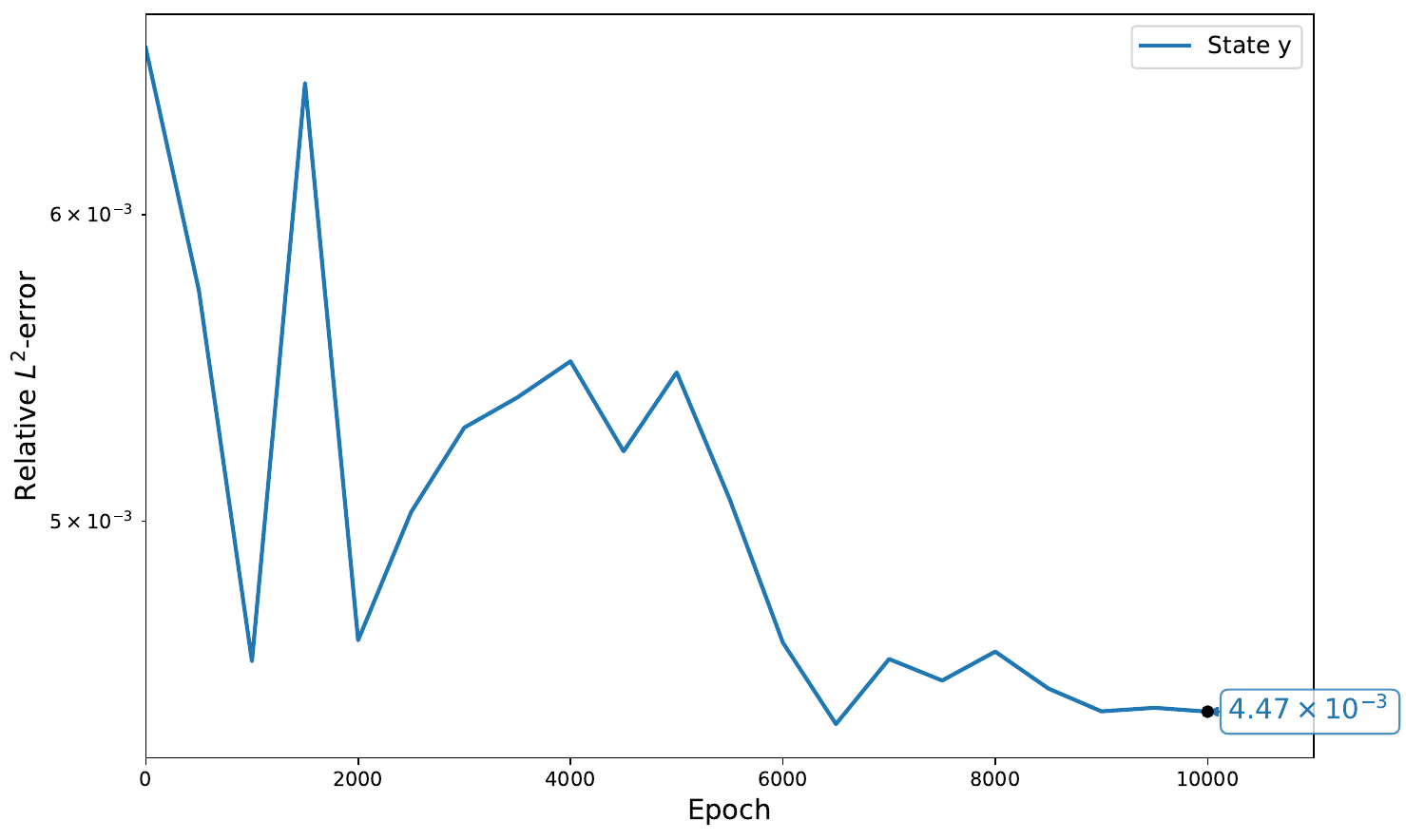}%
		
	}
	
	\caption{\small \em Training trajectories of Algorithm~\ref{alg:Bilevel Deep Learning Method} for {Example 1}}
	\label{fig:training_ex1}
\end{figure}

\begin{figure}[htpb]
  \centering
  
  \subfloat[Reference state $y^*$]{%
    \includegraphics[width=0.30\textwidth]{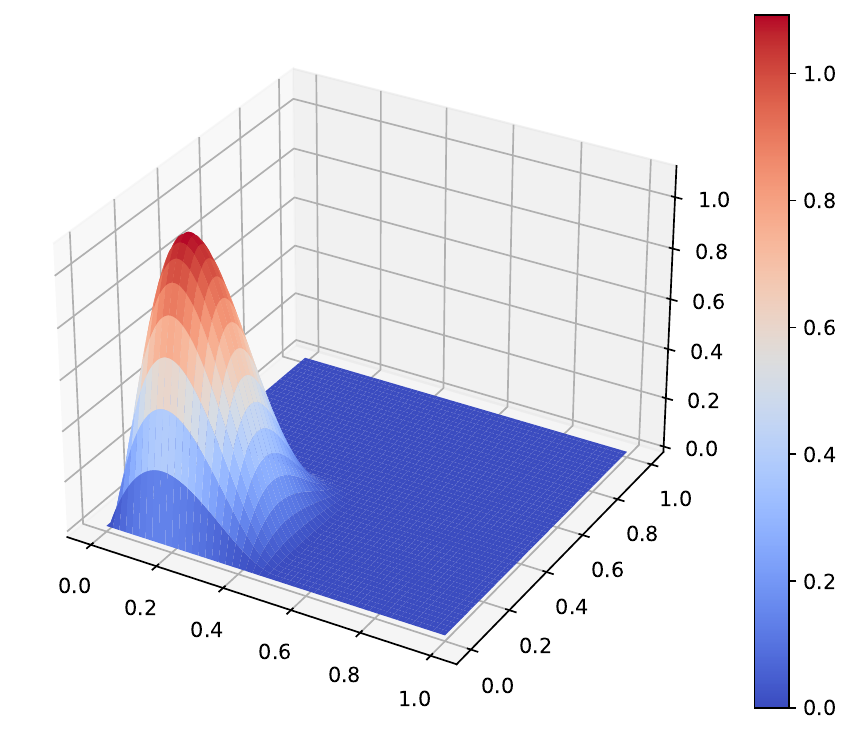}%

  }
  \hfill
  \subfloat[Computed state $\hat y$]{%
    \includegraphics[width=0.30\textwidth]{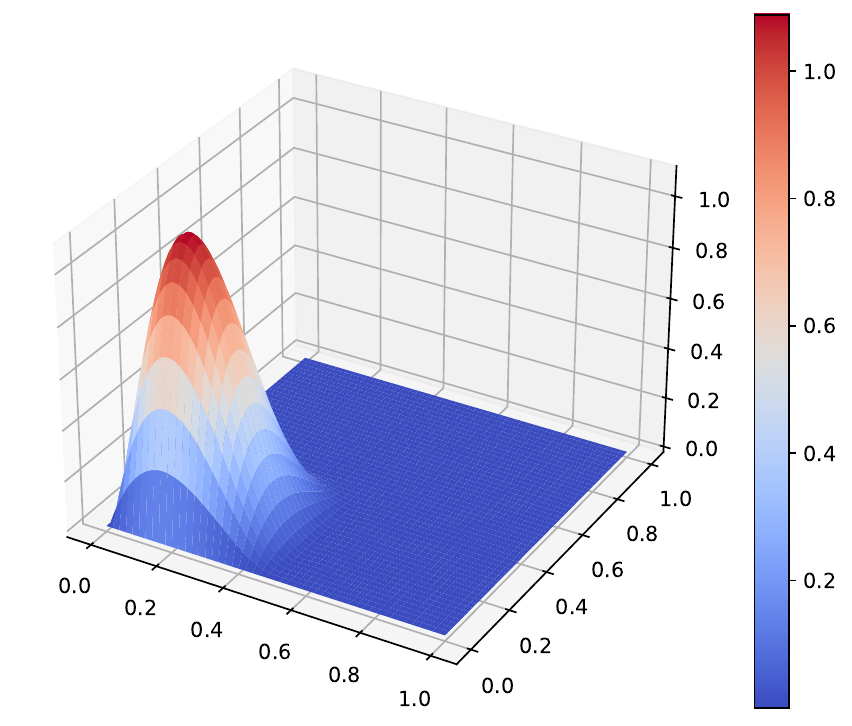}%

  }
  \hfill
  \subfloat[Pointwise error of state]{%
    \includegraphics[width=0.30\textwidth]{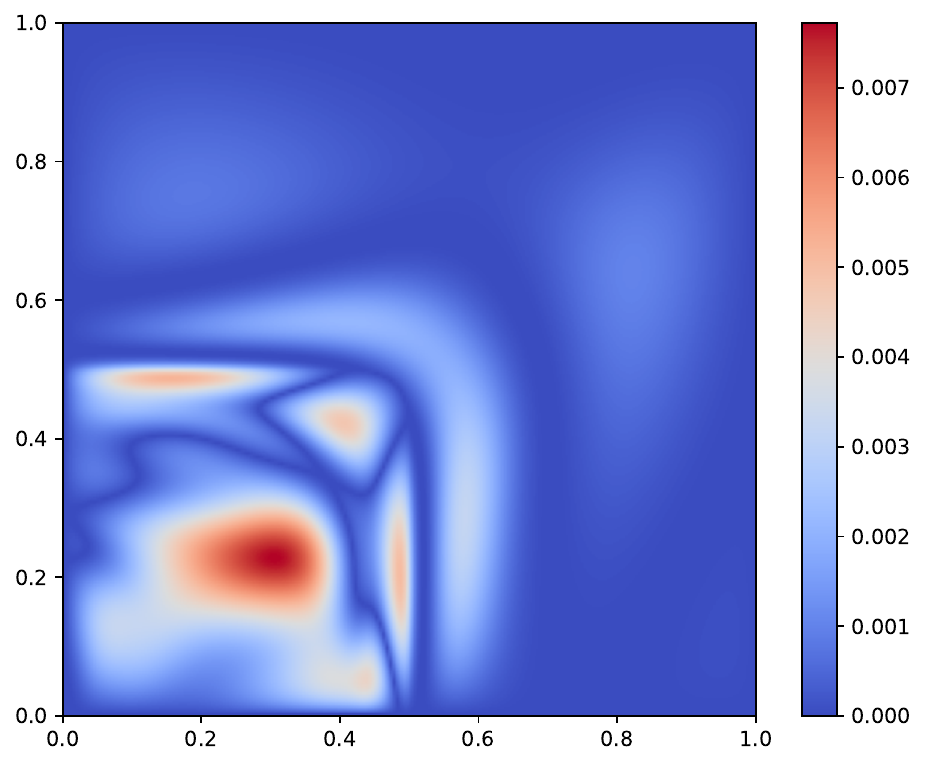}%

  }

  \vspace{1em}

  \subfloat[Reference control $u^*$]{%
    \includegraphics[width=0.30\textwidth]{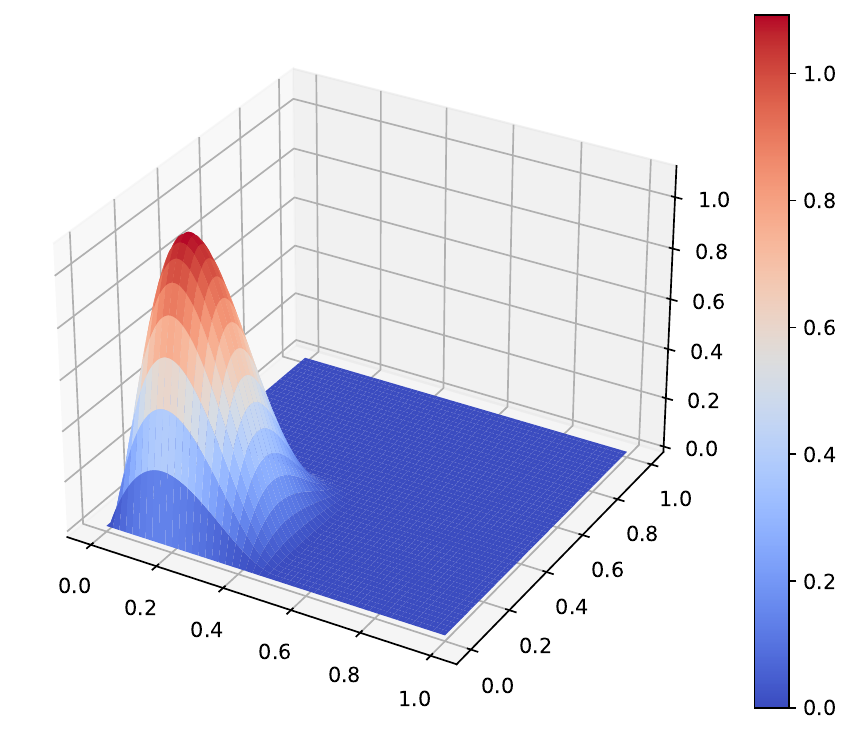}%

  }
  \hfill
  \subfloat[Computed control $\hat u$]{%
    \includegraphics[width=0.30\textwidth]{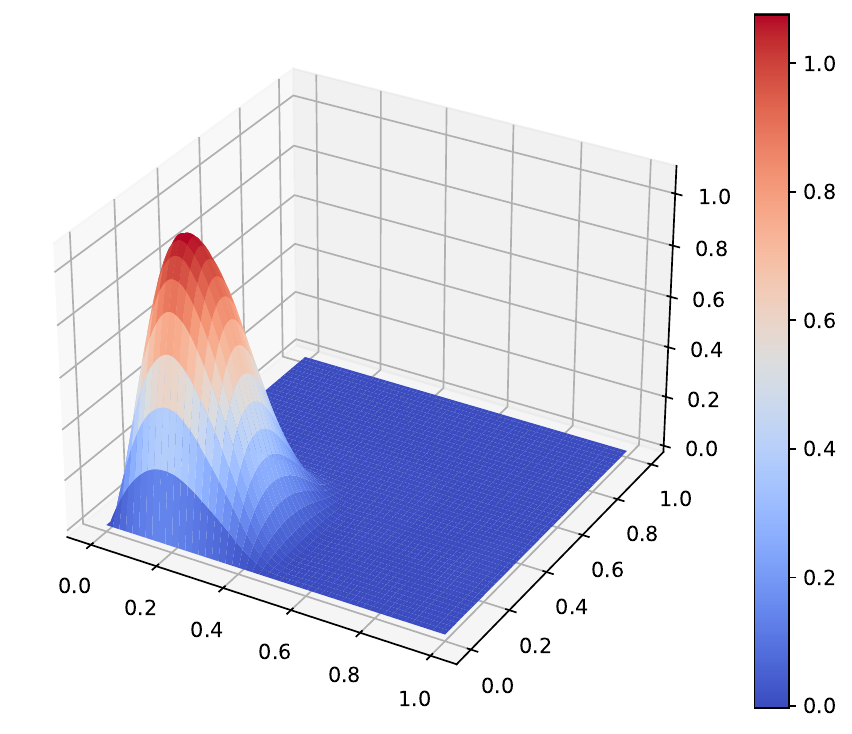}%

  }
  \hfill
  \subfloat[Pointwise error of control]{%
    \includegraphics[width=0.30\textwidth]{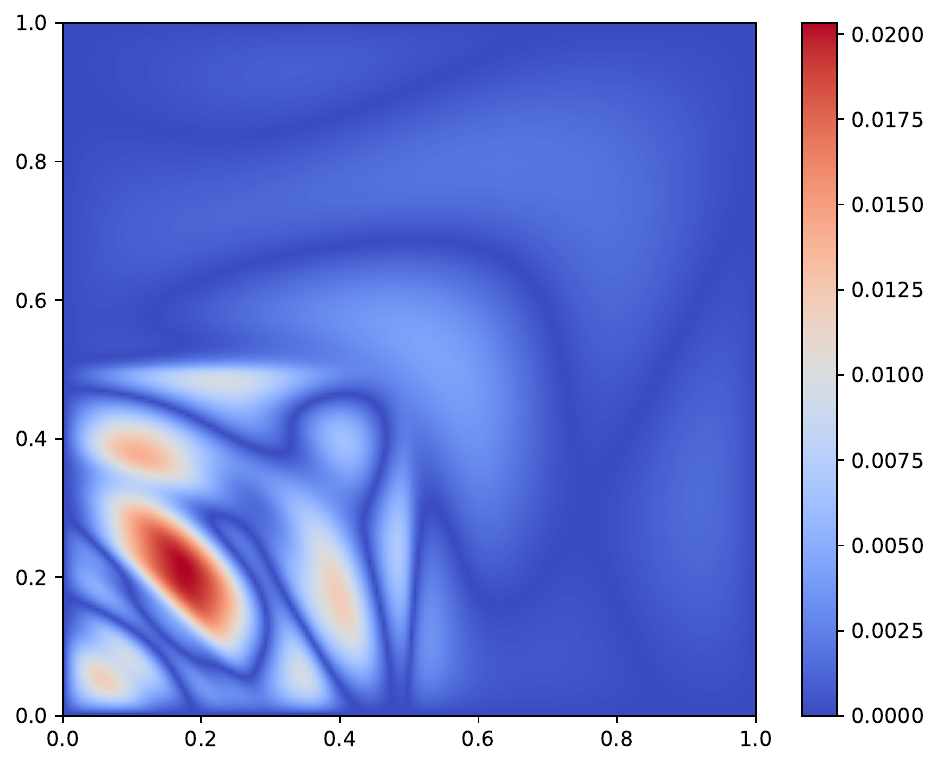}%

  }

  \caption{\small \em Numerical results of Algorithm~\ref{alg:Bilevel Deep Learning Method} for {Example 1}}
  \label{fig:meha_2}
\end{figure}

Furthermore, the control constraint $u\in U_{ad}$ often presents challenges and typically requires additional techniques, such as active set detection. Within Algorithm \ref{alg:Bilevel Deep Learning Method}, this issue can be effectively addressed by embedding $u\in U_{ad}$ via \eqref{eq:control-embedding-relu}. To demonstrate the effect of this modification, we set \( U_{ad} = \{ u \in L^2(\Omega) \mid 0 \leq u(x) \leq 0.7 \text{ a.e.\ in } \Omega \} \). The corresponding results are shown in Figure~\ref{fig:meha_2_constraint}, which validate the effectiveness of Algorithm \ref{alg:Bilevel Deep Learning Method} for handling the control constraints. 

\begin{figure}[h!]
	\centering
	\subfloat[Computed state \(\hat y\)]{%
		\includegraphics[width=0.45\textwidth]{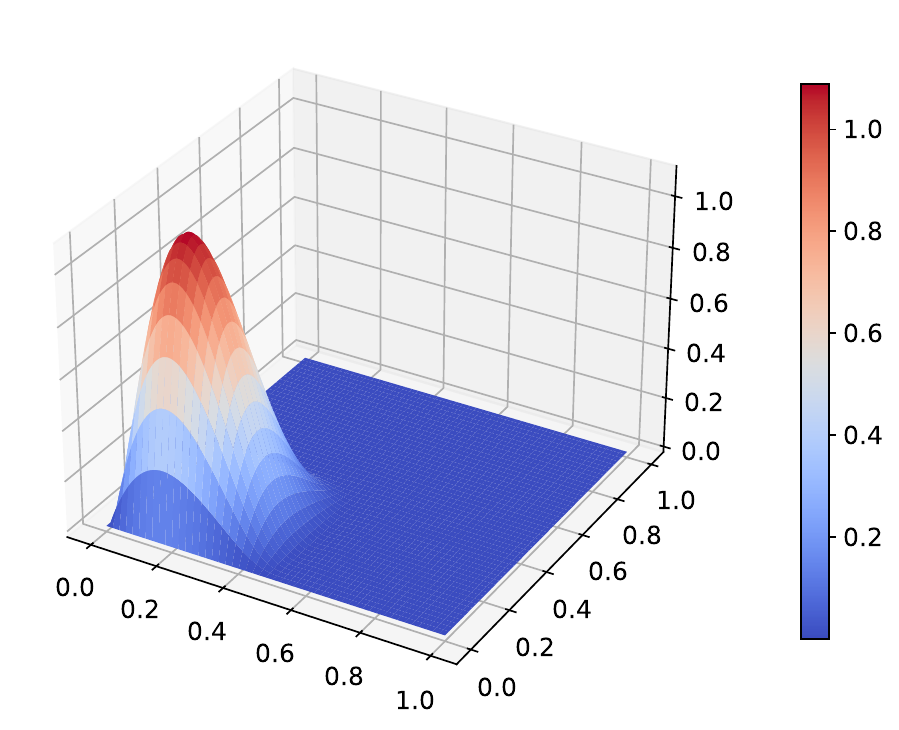}%
	}\hfill
	\subfloat[Computed control \(\hat u\) with constraint]{%
		\includegraphics[width=0.45\textwidth]{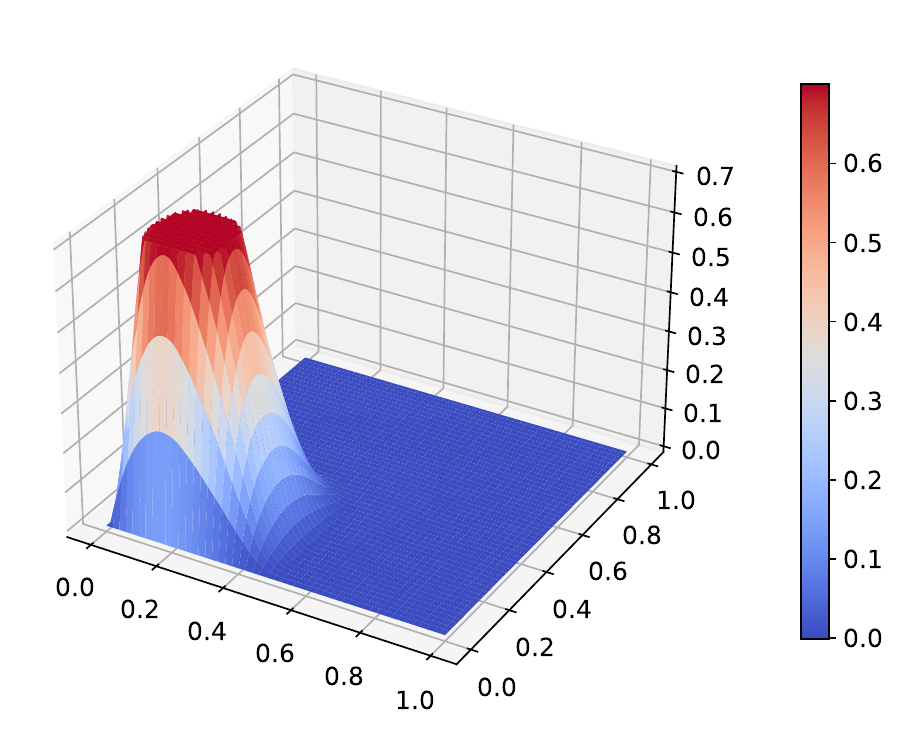}%
	}
	\caption{\small \em Numerical results of Algorithm~\ref{alg:Bilevel Deep Learning Method} for Example 1 with control constraints}
	\label{fig:meha_2_constraint}
\end{figure}
 
Then, we compare Algorithm \ref{alg:Bilevel Deep Learning Method} with Algorithm \ref{alg:Single-level Deep Learning Method}. For this purpose, we test Algorithm~\ref{alg:Single-level Deep Learning Method} using various weights \( w \) and the computed relative errors are displayed in Figure \ref{fig:example2 vanilla} (a) and some comparison results are presented in Table~\ref{tab:rel_l2_errors_example1}.  Moreover, to assess the quality of the obtained solutions, we employ the primal–dual active set strategy  \cite{ito2003semi} to solve the minimization of energy functional $\mathscr{E}$ in terms of $y$, with the control $u$ computed by Algorithm~\ref{alg:Single-level Deep Learning Method} and Algorithm \ref{alg:Bilevel Deep Learning Method}. This allows us to restore the lower-level minimization constraint and compute the corresponding recovered objective functional \( J \). Two representative results are shown in Figure~\ref{fig:example2 vanilla} (b) and (c). From these results, we conclude that, with a fixed weight $w$, Algorithm~\ref{alg:Single-level Deep Learning Method} fails to accurately recover the lower-level energy and the upper-level objective,  and the computed control and state lack sufficient accuracy.

\begin{figure}[htpb]
	\centering
	\subfloat[Numerical errors of Algorithm \ref{alg:Single-level Deep Learning Method} with different $w$.]{%
		\includegraphics[width=0.52\textwidth]{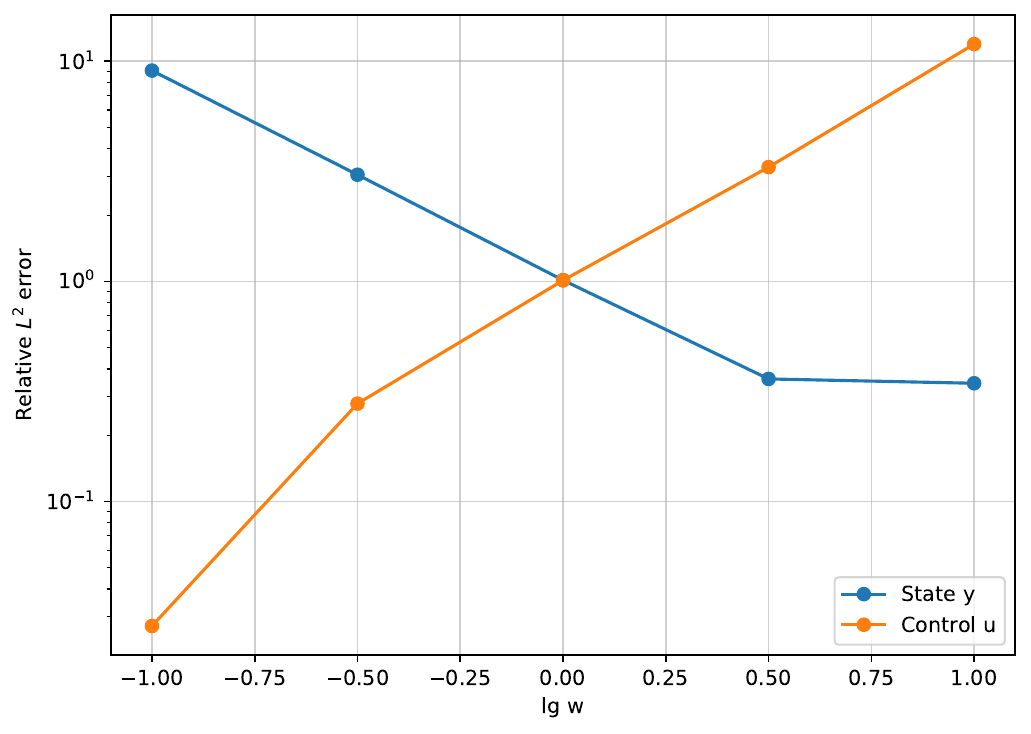}%
	}

	\vspace{0.8em}

	\subfloat[Comparisons of the computed lower-level energy]{%
		\includegraphics[width=0.48\textwidth]{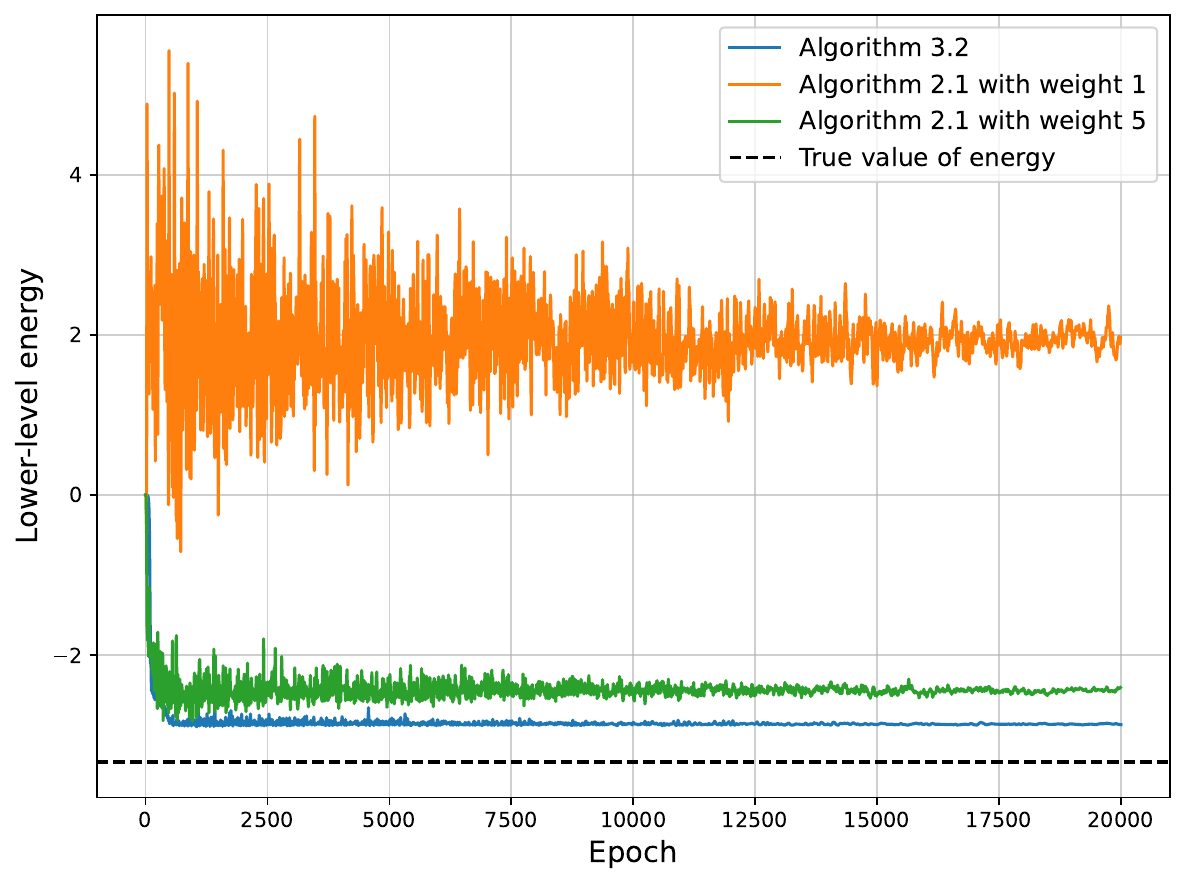}%
	}\hfill
	\subfloat[Comparisons of the computed upper-level objective functional value]{%
		\includegraphics[width=0.48\textwidth]{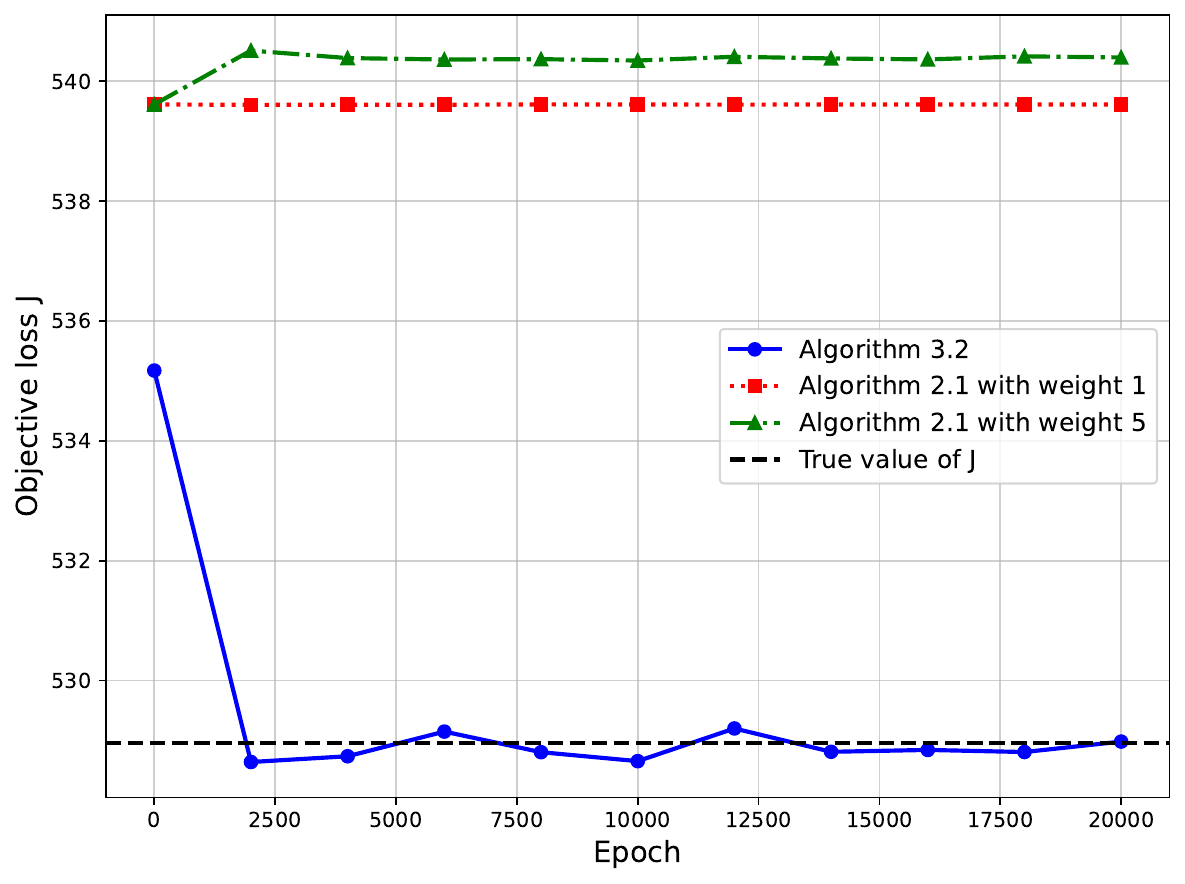}%
	}
	\caption{\small \em Numerical results of  Algorithm~\ref{alg:Single-level Deep Learning Method} for {Example 1}}
	\label{fig:example2 vanilla}
\end{figure}

\begin{table}[htpb]
	\centering
	{\small
	\begin{tabular}{@{}lcc@{}}
		\toprule
		Algorithm                        & \({\|\hat y - y^*\|_{L^2(\Omega)}}/{\|y^*\|_{L^2(\Omega)}}\) 
		& \({\|\hat u - u^*\|_{L^2(\Omega)}}/{\|u^*\|_{L^2(\Omega)}}\) \\
		\midrule
		Algorithm~\ref{alg:Bilevel Deep Learning Method}           & \(4.4658\times10^{-3}\)      
		& \(1.4949\times10^{-2}\)                  \\
		Algorithm~\ref{alg:Single-level Deep Learning Method} (\(w=1\))    & \(1.0127\times10^{0}\)      
		& \(1.0113\times10^{0}\)                  \\
		Algorithm~\ref{alg:Single-level Deep Learning Method} (\(w=5\))    & \(2.7236\times10^{-1}\)      
		& \(5.3694\times10^{0}\)                   \\
		\bottomrule
	\end{tabular}
  \caption{\small \em Relative \(L^2\)-errors of  Algorithm~\ref{alg:Single-level Deep Learning Method} and  Algorithm~\ref{alg:Bilevel Deep Learning Method} for {Example 1}\label{tab:rel_l2_errors_example1}}
}

\end{table}

Finally, we compare Algorithm \ref{alg:Bilevel Deep Learning Method} with the active set method \cite{hintermuller2008active} in terms of the computational time and the relative \( L^2 \)-errors calculated on uniform grids of size \( N \times N \) over the domain \( \Omega \) for \( N = 16, 32, 64, \dots, 1024 \).
From Table~\ref{tab:rel_l2_errors_yu}, we can see that, for $N\leq 256$, Algorithm~\ref{alg:Bilevel Deep Learning Method} achieves substantially lower relative $L^2$-errors than the active set method. Since Algorithm~\ref{alg:Bilevel Deep Learning Method} is mesh-free, it results in a consistent error that is largely independent of the grid resolution. The active set method, in contrast, is resolution-dependent; its accuracy improves with a finer mesh, allowing for high-precision solutions. Furthermore, once the NNs are trained at the resolution of $N=32$, evaluating Algorithm \ref{alg:Bilevel Deep Learning Method} at a new resolution requires only a forward pass. Classical methods like the active set method, however, require a complete re-meshing and re-computation for each resolution $N$, leading to significantly higher computational costs.

\begin{table}[htpb] 
	{\small
		\centering  
		\setlength{\tabcolsep}{2.5pt}
		\begin{tabular}{c|ccc|ccc}     
			\toprule
			\multirow{2}{*}{$N$} & \multicolumn{3}{c}{The Active Set Method~\cite{hintermuller2008active}} & \multicolumn{3}{c}{Algorithm~\ref{alg:Bilevel Deep Learning Method} } \\
			\cmidrule(lr){2-4} \cmidrule(lr){5-7}
			& $\|y-y^*\|/\|y^*\|$ & $\|u-u^*\|/\|u^*\|$ & Time(s) & $\|y-y^*\|/\|y^*\|$ & $\|u-u^*\|/\|u^*\|$ & Time(s) \\
			\midrule
			16   & $2.458\times10^{-1}$ & $2.360\times10^{-1}$ & 0.053 & $4.370\times10^{-3}$ & $1.456\times10^{-2}$ & 0.004 \\
			32   & $1.289\times10^{-1}$ & $1.286\times10^{-1}$ & 0.058 & $4.364\times10^{-3}$ & $1.490\times10^{-2}$ & 0.004 \\
			64   & $6.453\times10^{-2}$ & $6.952\times10^{-2}$ & 0.173 & $4.459\times10^{-3}$ & $1.495\times10^{-2}$ & 0.004 \\
			128  & $3.093\times10^{-2}$ & $3.517\times10^{-2}$ & 1.257 & $4.467\times10^{-3}$ & $1.495\times10^{-2}$ & 0.005 \\
			256  & $1.472\times10^{-2}$ & $1.694\times10^{-2}$ & 9.591 & $4.467\times10^{-3}$ & $1.495\times10^{-2}$ & 0.006 \\
			512  & $7.042\times10^{-3}$ & $9.067\times10^{-3}$ & 69.701 & $4.467\times10^{-3}$ & $1.495\times10^{-2}$ & 0.010 \\
			1024 & $3.166\times10^{-3}$ & $5.566\times10^{-3}$ & 178.749 & $4.467\times10^{-3}$ & $1.495\times10^{-2}$ & 0.044 \\
			\bottomrule
		\end{tabular}  
    \caption{\small \em Relative $L^2$ errors of the computed solutions using Algorithm~\ref{alg:Bilevel Deep Learning Method} and the active set method~\cite{hintermuller2008active} for Example 1. The results of the active set method are computed and evaluated with each mesh resolution $N$, while the results of Algorithm \ref{alg:Bilevel Deep Learning Method} are
			computed with fixed training resolution $N= 32$ and evaluated with each mesh resolution $N$.}
    \label{tab:rel_l2_errors_yu}
	}
\end{table}


\medskip

\noindent \textbf{Example 2 (Flat free boundary).}
This example is adapted (with a simple scaling) from Example 6.2 in  \cite{hintermuller2009mathematical}. Let \(\Omega=(0,1)^2\), \(\sigma=0.02\), \(Y_{ad}=\{y\in H_0^1(\Omega)\mid y\ge0\text{ a.e.\ in }\Omega\}\),
and \(U_{ad}=L^2(\Omega)\). Consider the non‐smooth source
$f(x_1,x_2)=y_d(x_1,x_2)=-5\bigl|x_1x_2-0.5\bigr|+1.25.$
In the absence of an exact solution, we employ the relaxed MPEC method from  \cite{hintermuller2009mathematical} on uniform grids with mesh size \( h = 1/100 \) to compute a reference pair \((y^*, u^*)\).
We employ the NNs given by \eqref{eq:state_embedding}-\eqref{eq:control_embedding} and implement Algorithm~\ref{alg:Bilevel Deep Learning Method} with
$
T = 20,000,
\gamma = 500,$ and $c_k = 5k^{0.3}$.

Figure \ref{fig:meha_3} presents the computed state \(y\), control \(u\), the reference solutions, and their pointwise errors. 
The final relative \(L^2\)-errors computed on a mesh (\(h=1/100\)) for the state and the control are respectively $6.70\times10^{-3}$ and $2.07\times10^{-2}$.

In this example, the very flat transition of the optimal state \(y^*\) from the inactive set $\left\{x \in \Omega \mid y(x)>\psi(x)\right\}$ to the active set $\left\{x \in \Omega \mid y(x)=\psi(x)\right\}$ makes active-set detection particularly challenging. In such cases, purely primal active set techniques may be less effective. By contrast, leveraging the mesh-free nature of Algorithm~\ref{alg:Bilevel Deep Learning Method} and the constraint-embedding enforced architecture \eqref{eq:state_embedding}-\eqref{eq:control_embedding}, our numerical solution strictly satisfies the constraints and exhibits robust accuracy at the free boundary.


\FloatBarrier
\begin{figure}[!htbp]
    \centering
    \subfloat[Reference state $y^*$]{\includegraphics[width=0.30\textwidth]{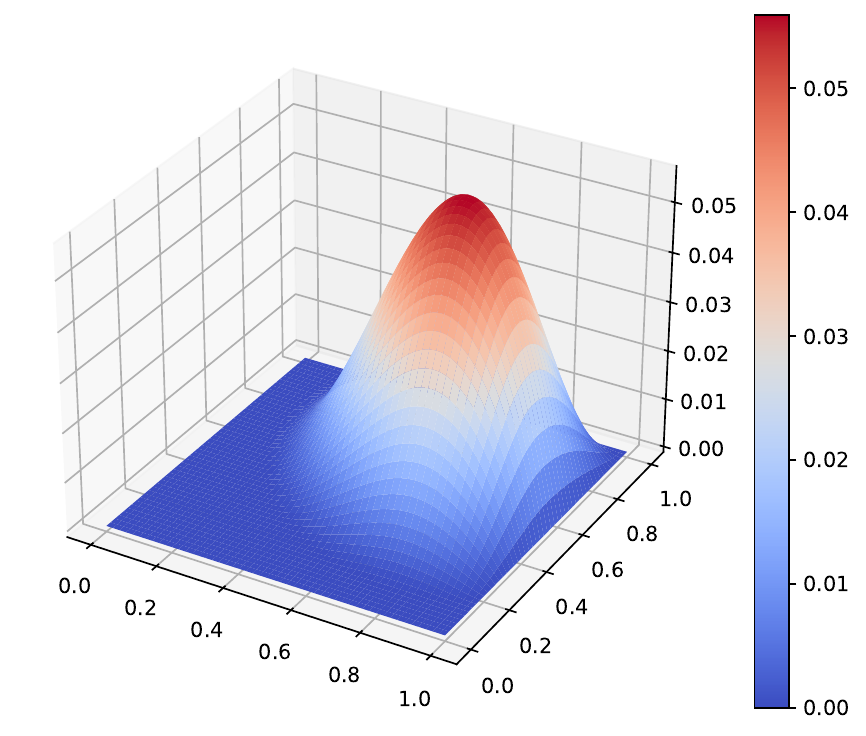}}
    \hfill
    \subfloat[Computed state $\hat y$]{\includegraphics[width=0.30\textwidth]{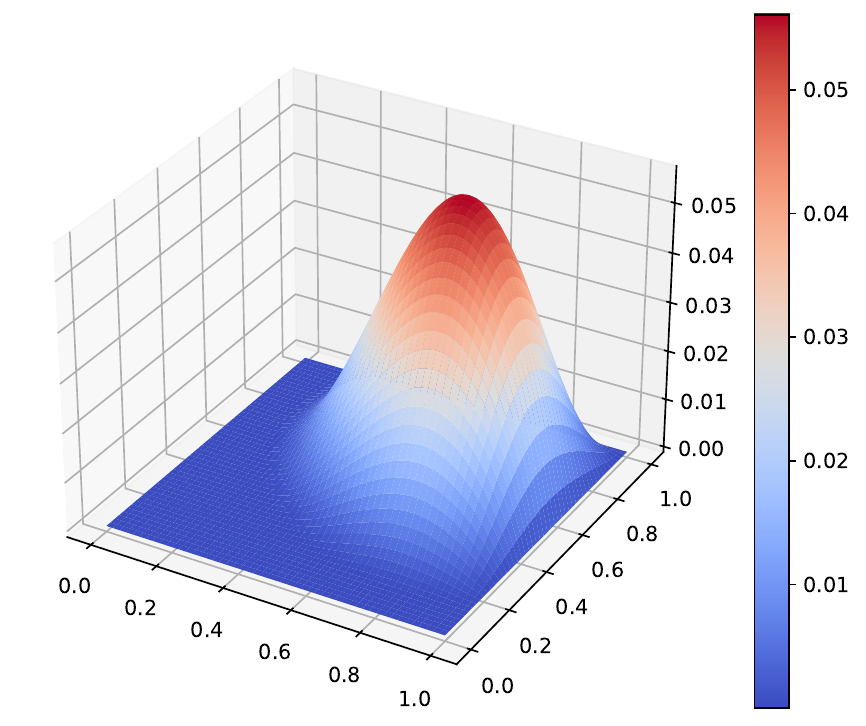}}
    \hfill
    \subfloat[Pointwise error of state]{\includegraphics[width=0.30\textwidth]{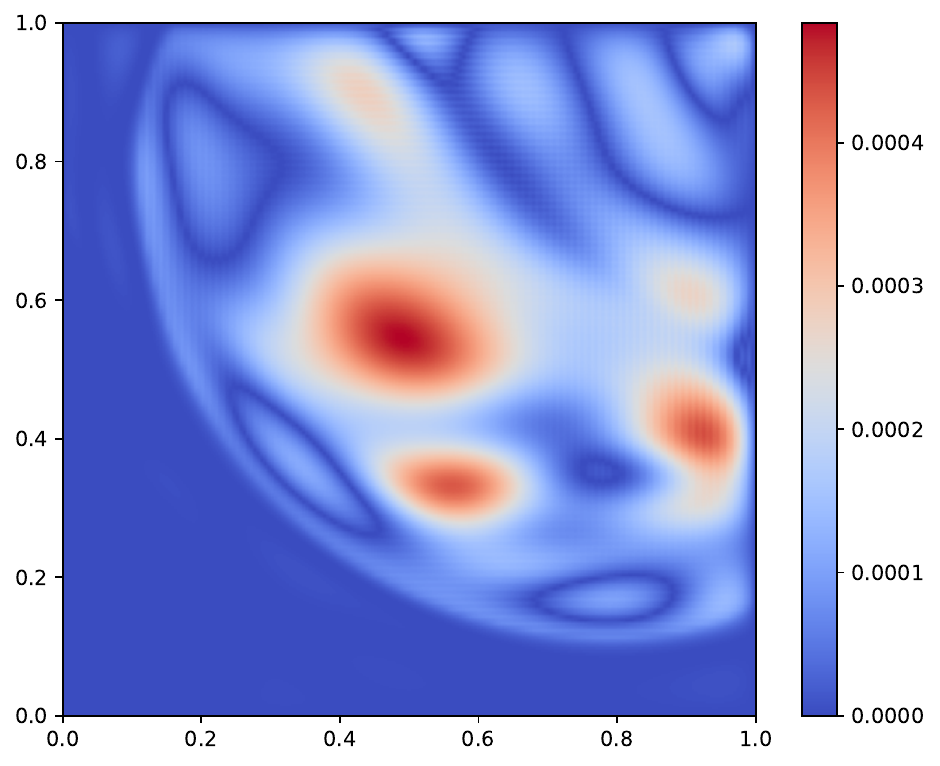}}

    \subfloat[Reference control $u^*$]{\includegraphics[width=0.30\textwidth]{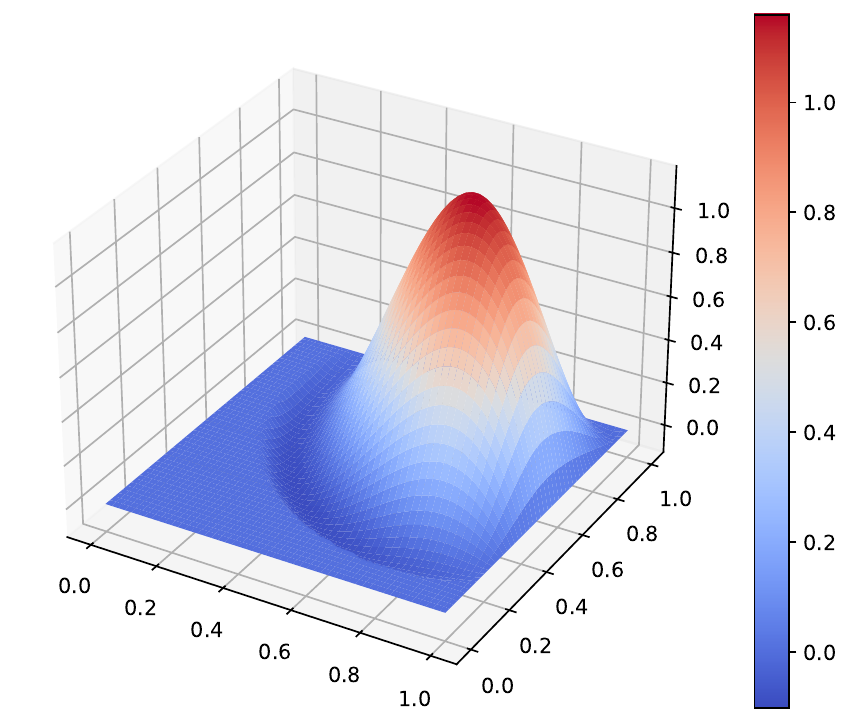}}
    \hfill
    \subfloat[Computed control $\hat u$]{\includegraphics[width=0.30\textwidth]{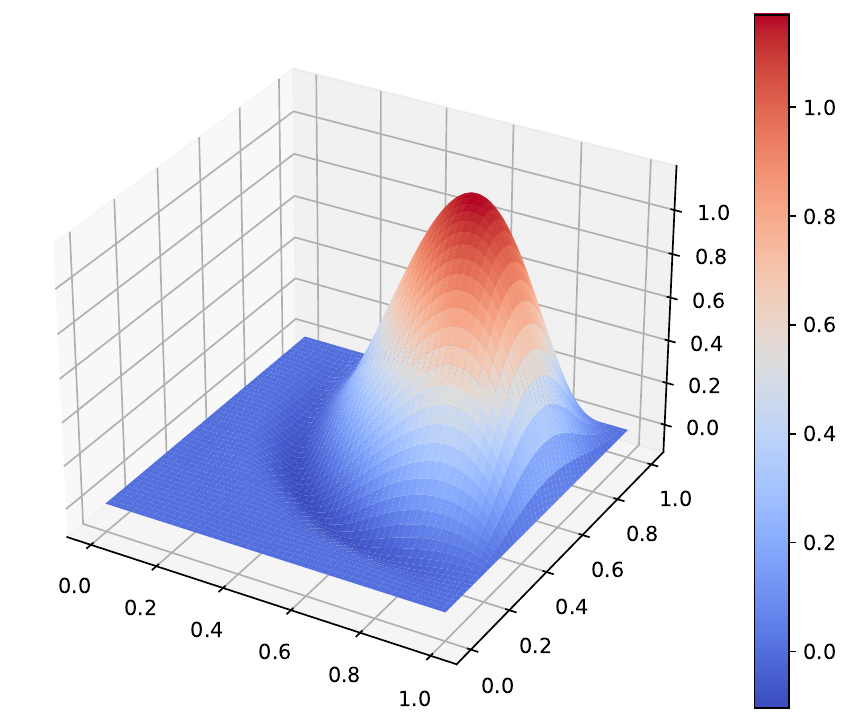}}
    \hfill
    \subfloat[Pointwise error of control]{\includegraphics[width=0.30\textwidth]{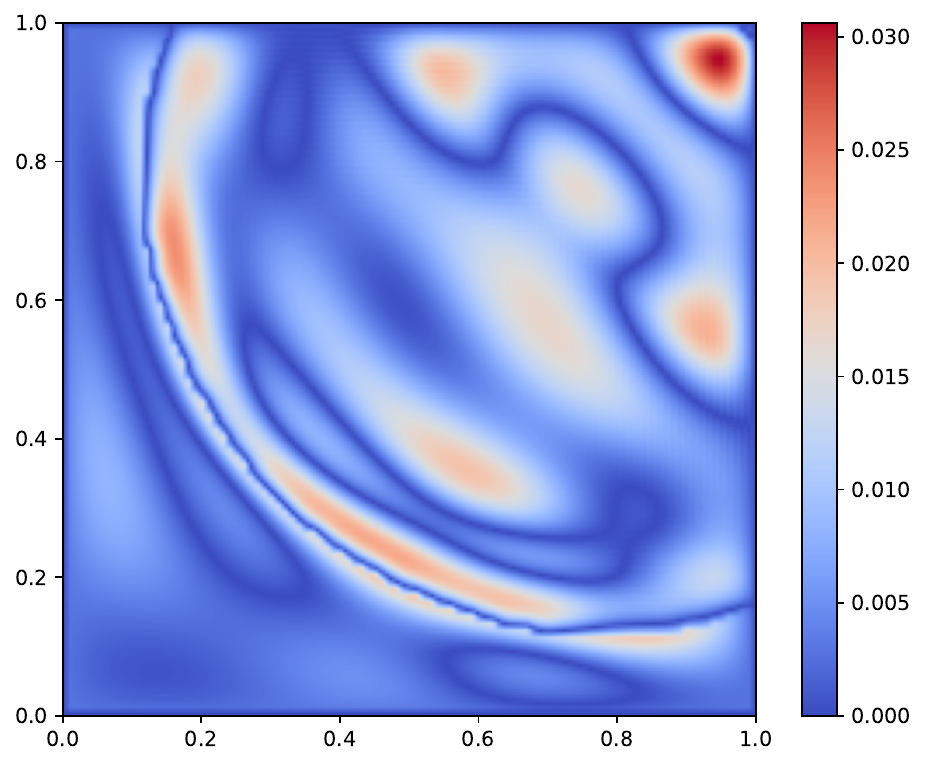}}

    \caption{\small \em Numerical results of Algorithm~\ref{alg:Bilevel Deep Learning Method} for {Example 2}}
    \label{fig:meha_3}
\end{figure}

\medskip

\noindent \textbf{Example 3 (High-dimensional space).} To demonstrate the scalability of the proposed method in high-dimensional settings, we consider a distributed optimal control problem in a 5-dimensional domain. Let $\Omega=(0,1)^5$, $\sigma=1$, $Y_{ad}=\{y\in H_0^1(\Omega)\mid y(x)\ge0\text{ a.e.\ in }\Omega\}$, and $U_{ad}=L^2(\Omega)$. We define an auxiliary function $P(x)=\prod_{i=1}^5 \sin(\pi x_i)$ and a threshold parameter $c=0.01$. The optimal state and control are constructed as
\[
y^*(x)=u^*(x)=\bigl(\max(0,P(x)-c)\bigr)^2.
\]
The multiplier is given by $\xi^*(x)=\bigl(\max(0,c-P(x))\bigr)^2$. Accordingly, the target state $y_d$ and the external forcing term $f$ are analytically derived to satisfy the stationarity conditions:
\[
y_d(x)=y^*(x)-\Delta y^*(x), \quad f(x)=-\Delta y^*(x)-y^*(x)-\xi^*(x).
\]

To overcome the curse of dimensionality inherent in classical mesh-based methods, we introduce two key modifications. First, we parameterize the state and control using Separable Physics-Informed Neural Networks (SPINNs) \cite{cho2023separable}, where the raw network output $\mathcal{N}(x;\theta)$ is formulated as:
\[
\mathcal{N}(x;\theta)=\sum_{j=1}^r \prod_{i=1}^5 f_j^{(\theta_i)}(x_i).
\]
Second, to efficiently approximate high-dimensional integrals, we replace standard pseudo-random sampling with a quasi-Monte Carlo strategy utilizing low-discrepancy Sobol sequences.

We then train the networks using Algorithm~\ref{alg:Bilevel Deep Learning Method}. Figure~\ref{fig:meha_high_dim} reports the two-stage training losses, and compares the reference solutions, computed solutions, and pointwise errors on 2D cross-sectional slices obtained by fixing three spatial coordinates. The results demonstrate that the proposed method accurately captures the optimal solutions, effectively overcoming the curse of dimensionality.

\begin{figure}[htpb]
    \centering
    \subfloat[Upper-level loss]{\includegraphics[width=0.45\textwidth]{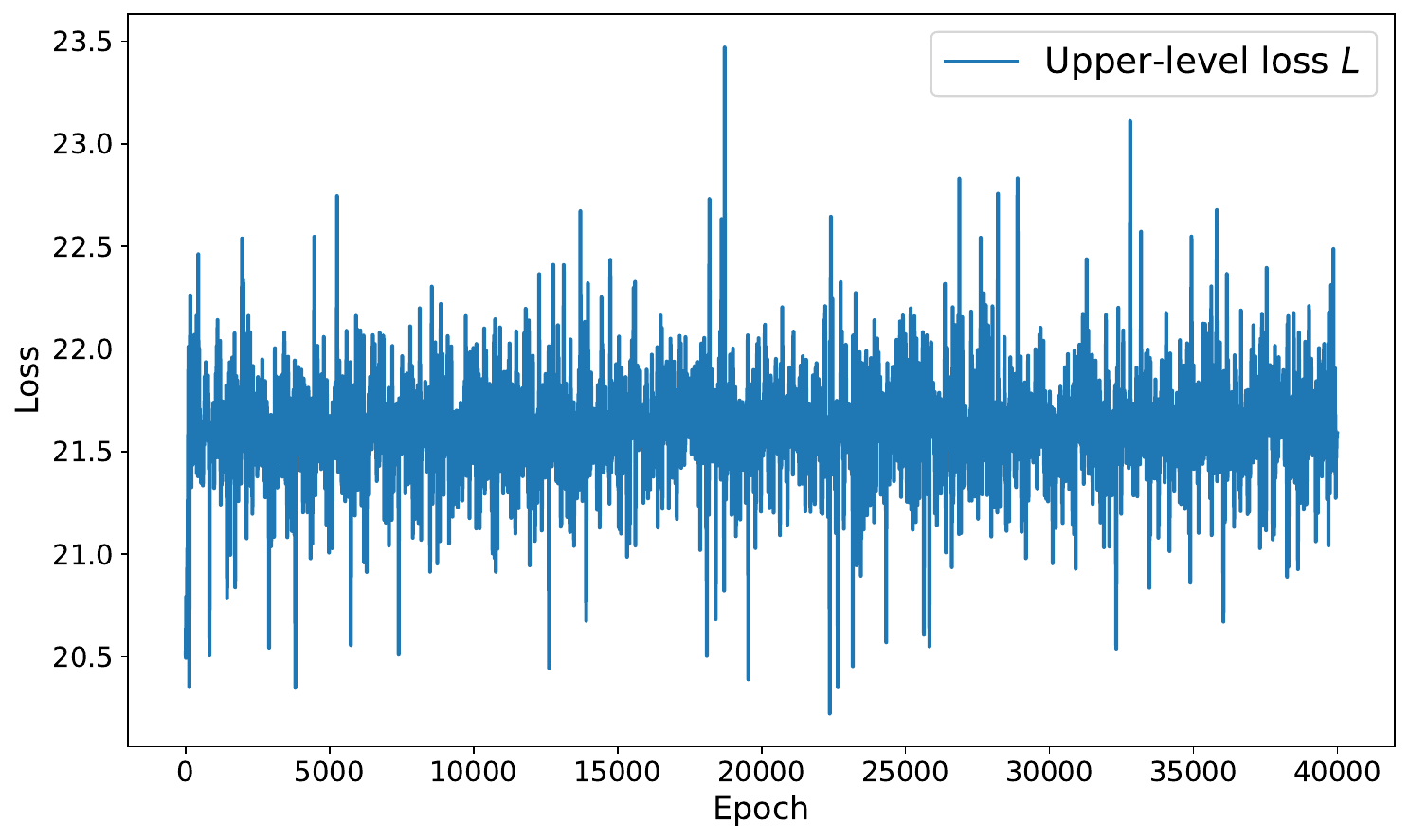}}
    \hfill
    \subfloat[Lower-level loss]{\includegraphics[width=0.45\textwidth]{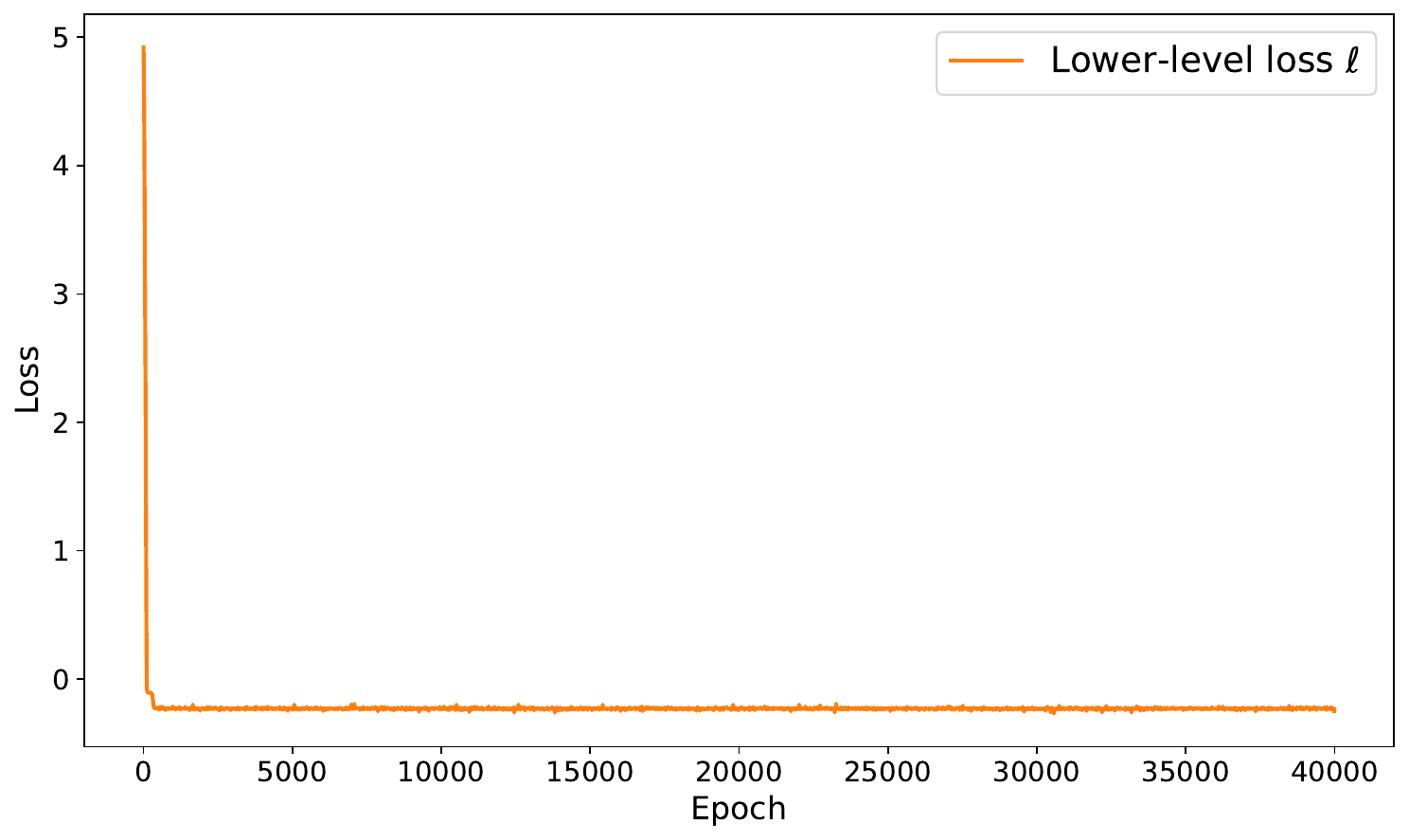}}
    
    \vspace{1em}
    
    \subfloat[Reference state $y^*$]{\includegraphics[width=0.30\textwidth]{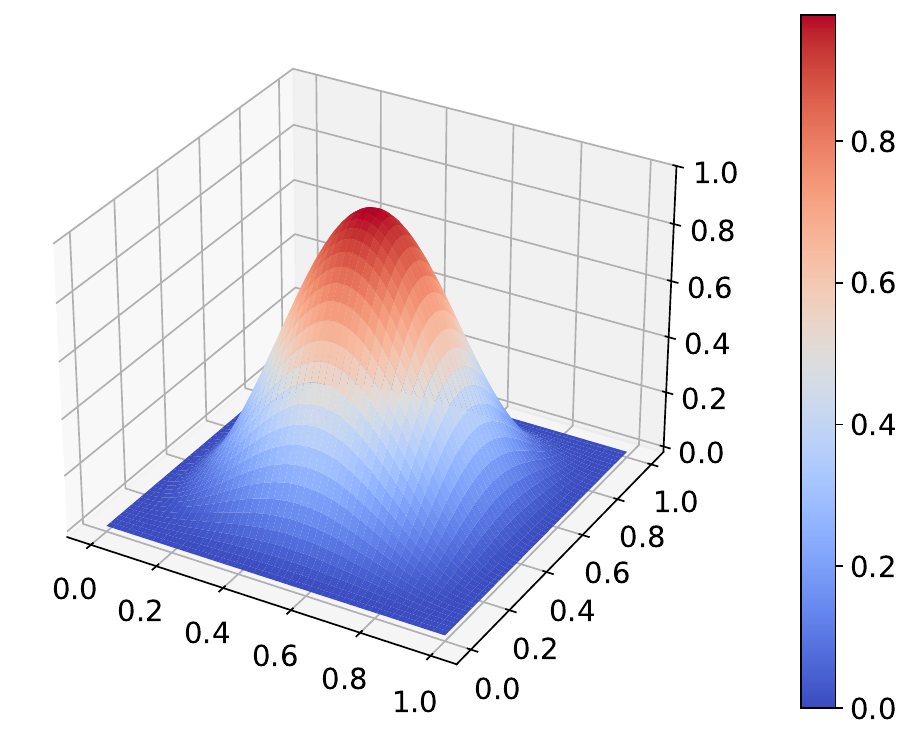}}
    \hfill
    \subfloat[Computed state $\hat y$]{\includegraphics[width=0.30\textwidth]{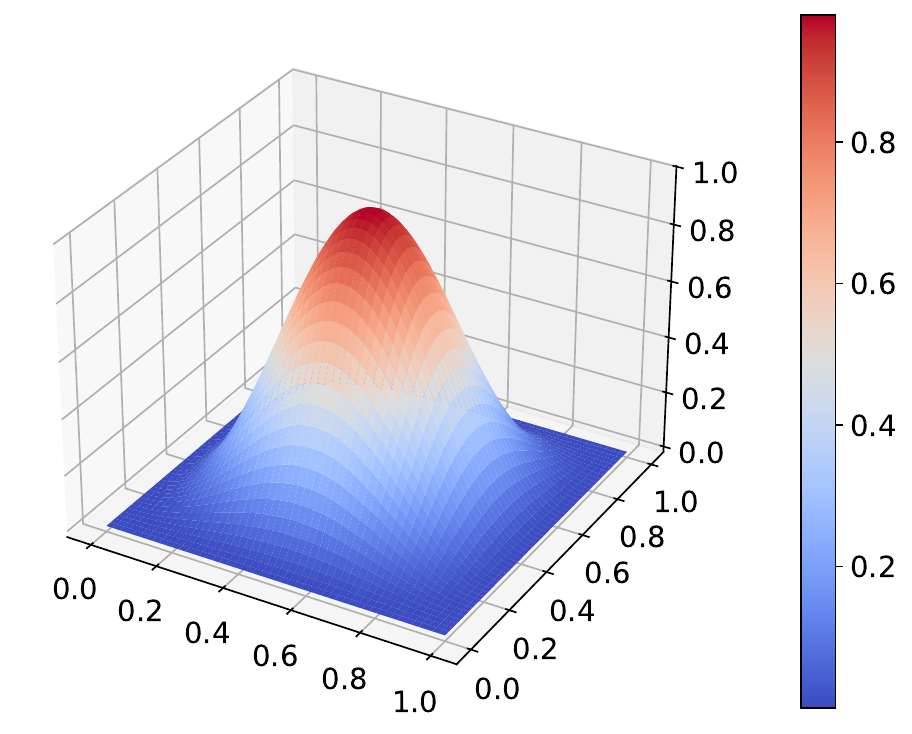}}
    \hfill
    \subfloat[Pointwise error of state]{\includegraphics[width=0.30\textwidth]{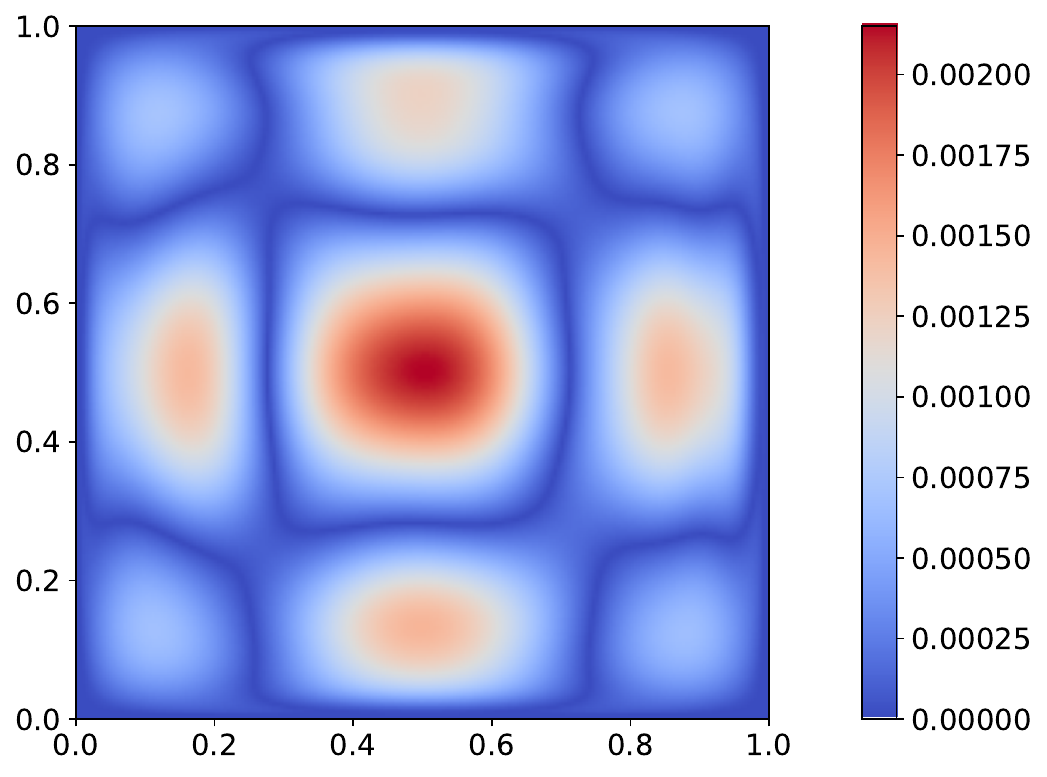}}
    
    \vspace{1em}
    
    \subfloat[Reference control $v^*$]{\includegraphics[width=0.30\textwidth]{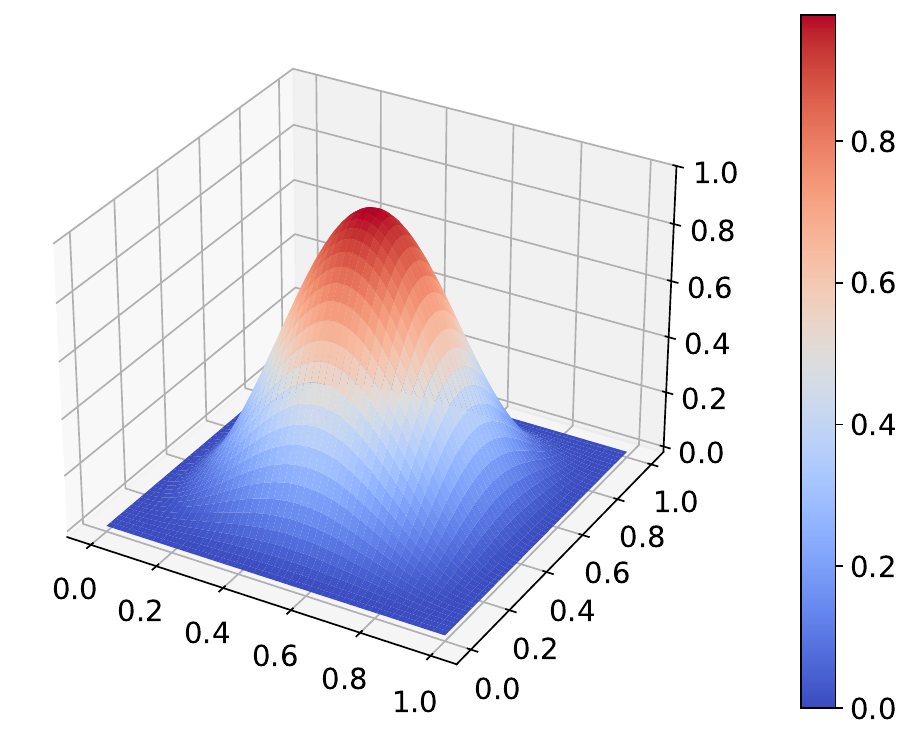}}
    \hfill
    \subfloat[Computed control $\hat v$]{\includegraphics[width=0.30\textwidth]{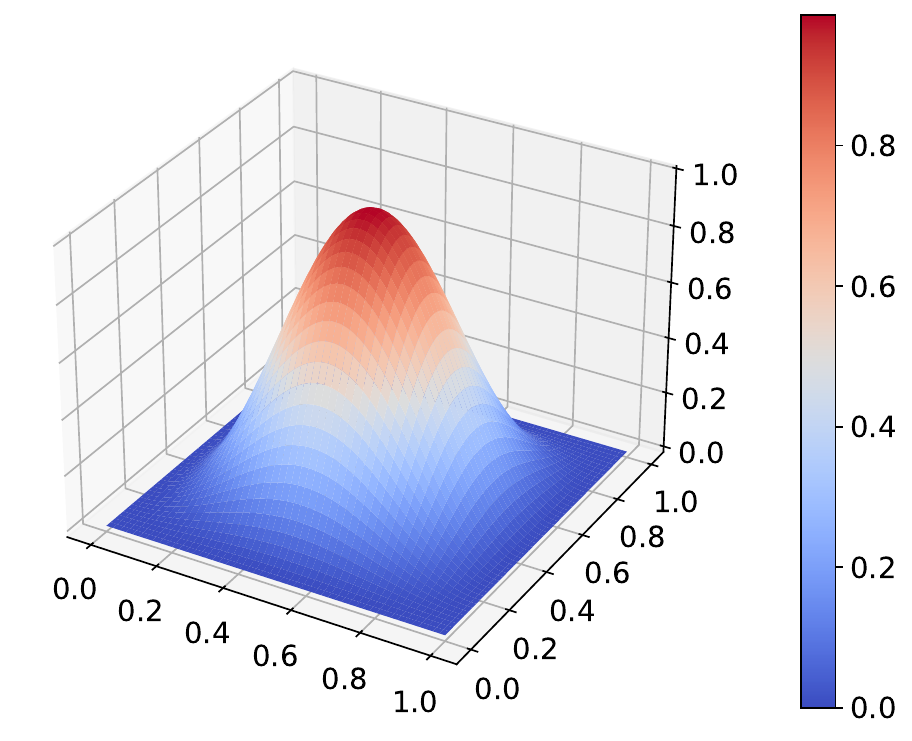}}
    \hfill
    \subfloat[Pointwise error of control]{\includegraphics[width=0.30\textwidth]{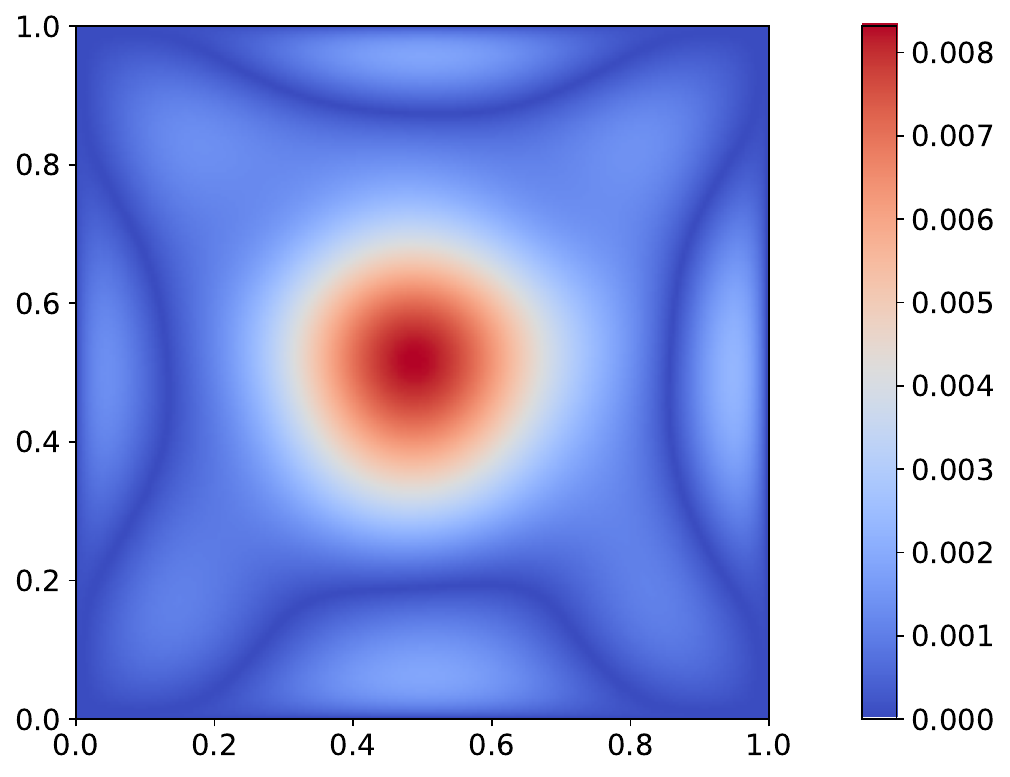}}
    
    \caption{\small \em Numerical results of Algorithm~\ref{alg:Bilevel Deep Learning Method} for {Example 3}. The state and control fields are visualized as 2D cross-sectional slices obtained by fixing three spatial coordinates.}
    \label{fig:meha_high_dim}
\end{figure}

\medskip

\noindent \textbf{Example 4 (Complex domain).}
Classical numerical methods for optimal control of obstacle problems are usually implemented with mesh-based discretization schemes (e.g., FEMs and FDMs), which require specialized meshes to conform to complex boundaries. As a result, their accuracy depends heavily on mesh quality. By contrast, Algorithm~\ref{alg:Bilevel Deep Learning Method} is mesh-free, which eliminates the need for costly mesh generation and allows for seamless handling of irregular geometries. To demonstrate this capability, we consider a problem defined on a complex domain.

Let $\rho_1(\zeta)=2.25+0.21 \sin (4 \zeta)+0.18 \cos (6 \zeta)+0.135 \cos (5 \zeta)$ and  
\[
\Omega=\left\{\left(x_1, x_2\right) \in \mathbb{R}^2 \mid x_1=r \cos (\zeta), x_2=r \sin (\zeta), 0 \leq r<\rho_1(\zeta), 0 \leq \zeta<2 \pi\right\}.
\]
Then, we consider the problem \eqref{eq:distribution} with $U_{ad}=L^2(\Omega)$ and
$
\psi(x_1,x_2)
=3\Bigl(1-\Bigl(\frac{r}{\rho_1(\zeta)}\Bigr)^2\Bigr),
$
where $
 r=\sqrt{x_1^2+x_2^2},$ and
$\zeta \in [0, 2\pi)$ is the polar angle such that $x_1 = r \cos \zeta$ and $x_2 = r \sin \zeta$.
 Set $\sigma = 1$ and 
 $f=y_d\equiv 2.$
We use the
NNs given by \eqref{eq:control-embedding-relu}-\eqref{eq:state-embedding-relu} and then implement Algorithm~\ref{alg:Bilevel Deep Learning Method} with
$T = 20,000, \gamma = 50$, and $c_k = 5k^{0.2}$.

Figure~\ref{fig:meha_complex_boundary} displays the training losses during the bilevel training process, the computed state \(\hat y\), and the computed control \(\hat u\). We can see that the control exhibits a flower-shaped pattern: under the given setup, it pushes the state downward within the contact set until it meets the obstacle, and lifts it upward outside the contact region. These results verify that our algorithm can effectively tackle problems in geometrically complex domains.

%
%
%
\
\begin{figure}[htpb]
    \centering
    	\subfloat[Upper-level loss]{%
    	\includegraphics[width=0.45\textwidth]{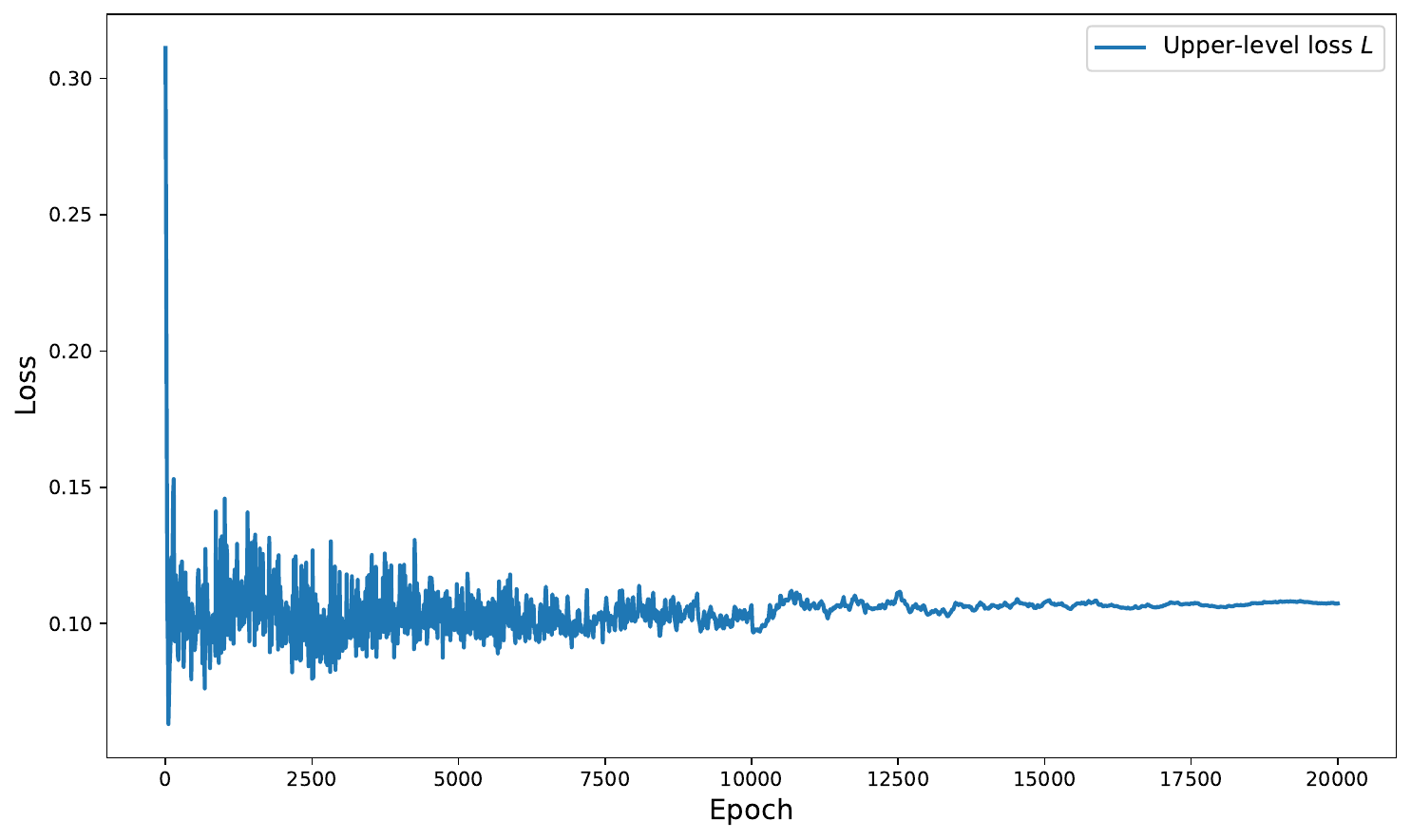}
    	
    }
    \hfill
    \subfloat[Lower-level loss]{%
    	\includegraphics[width=0.45\textwidth]{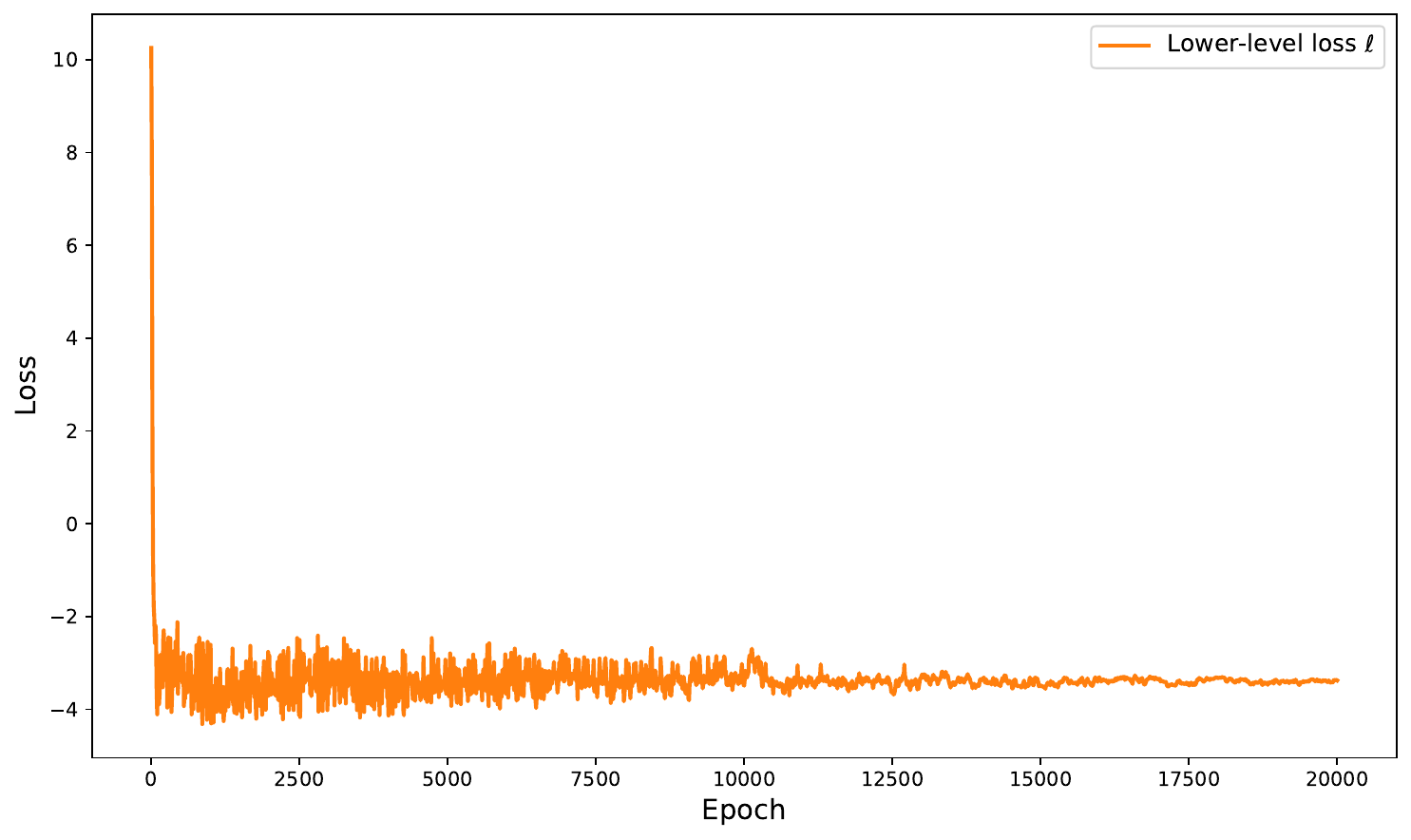}
    	
    }
    
    \subfloat[Computed state $\hat y$]{%
        \includegraphics[width=0.45\textwidth]{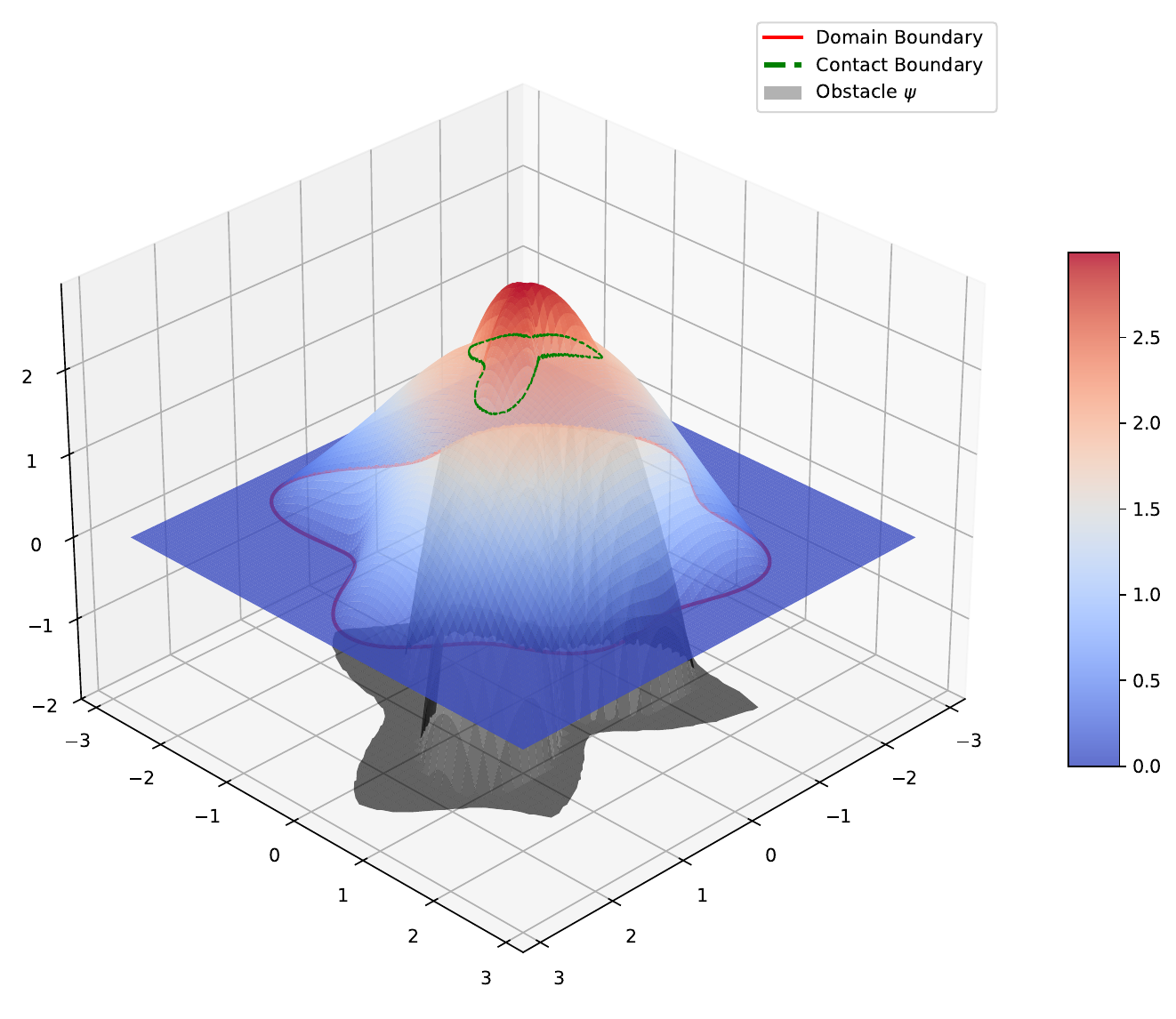}
    }
    \hfill
    \subfloat[Computed control $\hat u$]{%
        \includegraphics[width=0.45\textwidth]{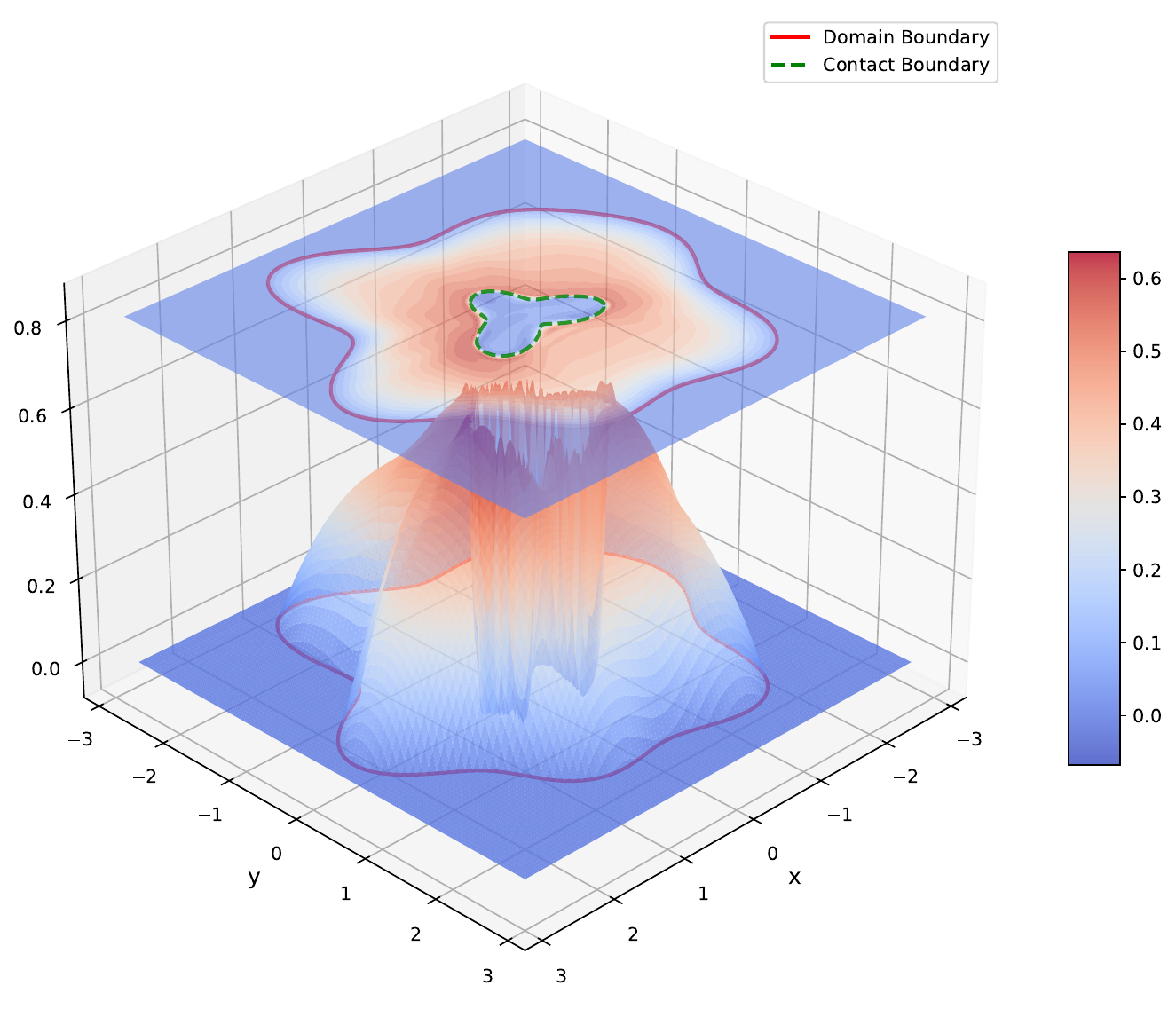}
    }

    \caption{\small \em Numerical Results of Algorithm~\ref{alg:Bilevel Deep Learning Method} for {Example 4}}
    \label{fig:meha_complex_boundary}
\end{figure}

\section{Extensions}\label{se:extension}
In this section, we demonstrate that Algorithm~\ref{alg:Bilevel Deep Learning Method} can be readily adapted to some variants of the problem~\eqref{general control}.

\subsection{Obstacle Control}
In addition to the distributed control $u$ considered in \eqref{eq:distribution}, the control can be the obstacle function \(\psi\). 
 For convenience, we consider the situation $\psi \in H_0^1(\Omega)$.  Let
\(\Omega\subset \mathbb{R}^d\) be a bounded domain, and define
\[
Y_{ad}
=\{\,y\in H_0^1(\Omega)\mid y(x)\le\psi(x)\text{ a.e.\ in }\Omega\}.
\]
Then, following \cite{ito2007optimal}, we consider the following optimal control problem with obstacle control \(\psi\in H_0^1(\Omega)\), where no control constraints are imposed on \(\psi\). 
\begin{equation} \label{eq:obstacle control}
\begin{aligned} 
\min _{y \in H_0^1(\Omega), \psi \in H_0^1(\Omega)}
J(y,\psi)
&=\frac{1}{2}\int_\Omega|y-y_d|^2\mathrm{d}x+\frac\sigma2\!\int_\Omega|\nabla\psi|^2\mathrm{d}x,\\
\quad\text{s.t.}\quad
y&=\arg\min_{y'\in Y_{ad}}\int_{\Omega}\left(\frac{1}{2}|\nabla y'|^2-fy'\right) \mathrm{d} x.\\
\end{aligned}
\end{equation}

We first approximate $\psi$ by the NN
\begin{equation} \label{embedding-extension-u}
  \hat \psi(x;\theta_\psi):= m(x)\mathcal{N}(x,\theta_\psi),  
\end{equation}
where \( m \in C^\infty(\overline{\Omega}) \) is chosen such that
    $
    m(x) = 0~\forall x \in \partial \Omega$ and $m(x) > 0~ \forall x \in \Omega.
    $
The state $y$ is approximated by the NN
\begin{equation} \label{embedding-extension-y-relu}
    \hat y(x;\theta_\psi,\theta_y):= -\operatorname{ReLU}\bigl(\hat \psi(x,\theta_\psi) - m(x)\mathcal{N}(x,\theta_y)\bigr) + \hat \psi(x,\theta_\psi).
\end{equation}
 Then, we obtain a bilevel optimization problem in the form of \eqref{eq: stochastic bi}, which can be solved by Algorithm \ref{alg:MEHA} for training the NNs.

\medskip
\noindent \textbf{Example 5 (Obstacle control).}
This example is adapted from {Example 3} in  \cite{ito2007optimal}, with \( \Omega = (0, 1)^2\) and \( \sigma = 0.5 \). The forcing term is defined as $f(x_1, x_2)=-100$ if  $x_2\in(0.25,0.65)$, and $f(x_1,x_2)=150$ elsewhere. 
The desired state is \(y_d\equiv5\). 

In this example, $f$ is discontinuous and the active set $\{x\in\Omega\mid y(x) = \psi(x) \}$ does not cover the entire domain \( \Omega \), which makes the problem more challenging. We use the active set method in \cite{ito2007optimal} to generate a reference solution on a uniform grid of mesh size \(h = 1/200\) with tolerance \(10^{-8}\). We employ the NNs given by \eqref{embedding-extension-u} and \eqref{embedding-extension-y-relu} and set
$T = 10,000, \gamma = 50,$ and $c_k = \frac{1}{5}k^{0.2}$ to implement Algorithm \ref{alg:Bilevel Deep Learning Method}. 

Figure~\ref{fig:meha_6} displays the computed state \(y\), control \(\psi\), and their corresponding reference solutions and pointwise errors. The final relative \(L^2\)-errors for the state and the control, computed on a uniform grid with a mesh size of \(h = 1/200\), are $ 4.25
\times 10^{-2}$ and $2.99 \times 10^{-2}$, respectively. 


\begin{figure}[htpb]
    \centering
    \subfloat[Reference state $y^*$]{\includegraphics[width=0.30\textwidth]{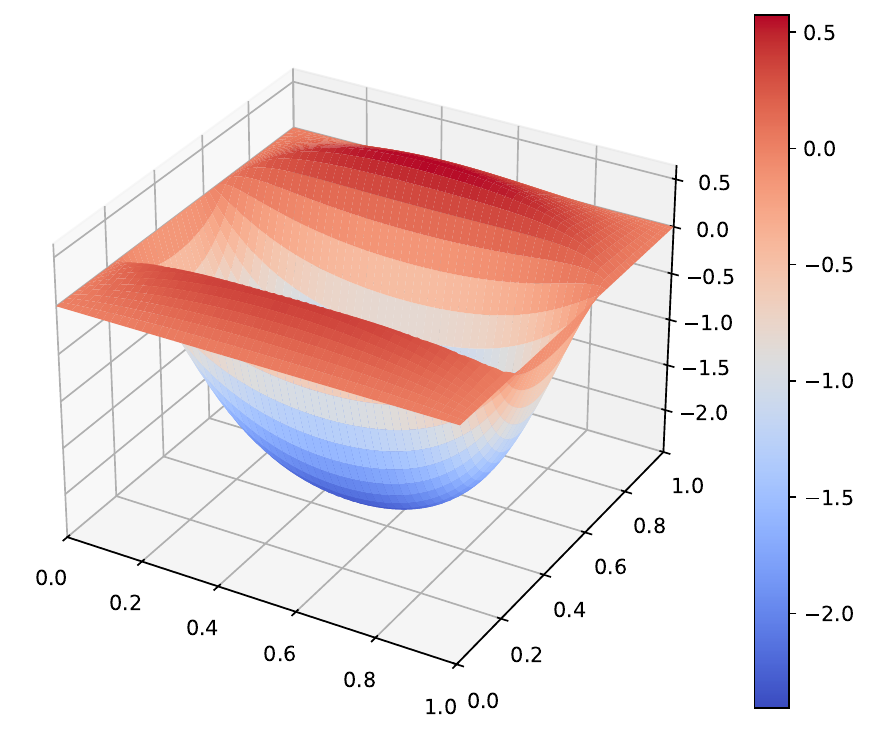}}
    \hfill
    \subfloat[Computed state $\hat y$]{\includegraphics[width=0.30\textwidth]{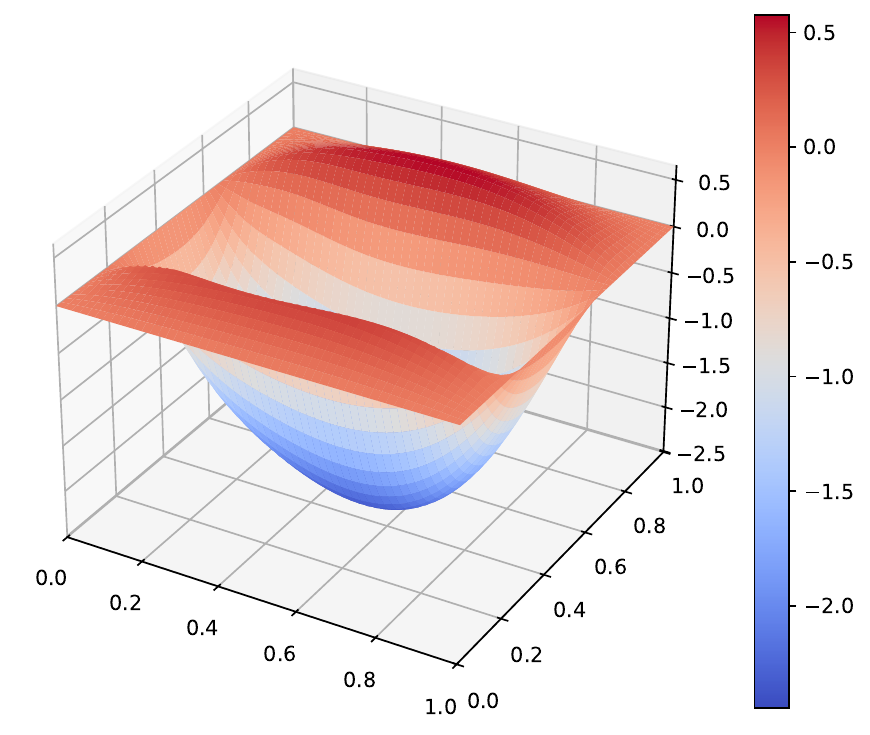}}
    \hfill
    \subfloat[Pointwise error of state]{\includegraphics[width=0.30\textwidth]{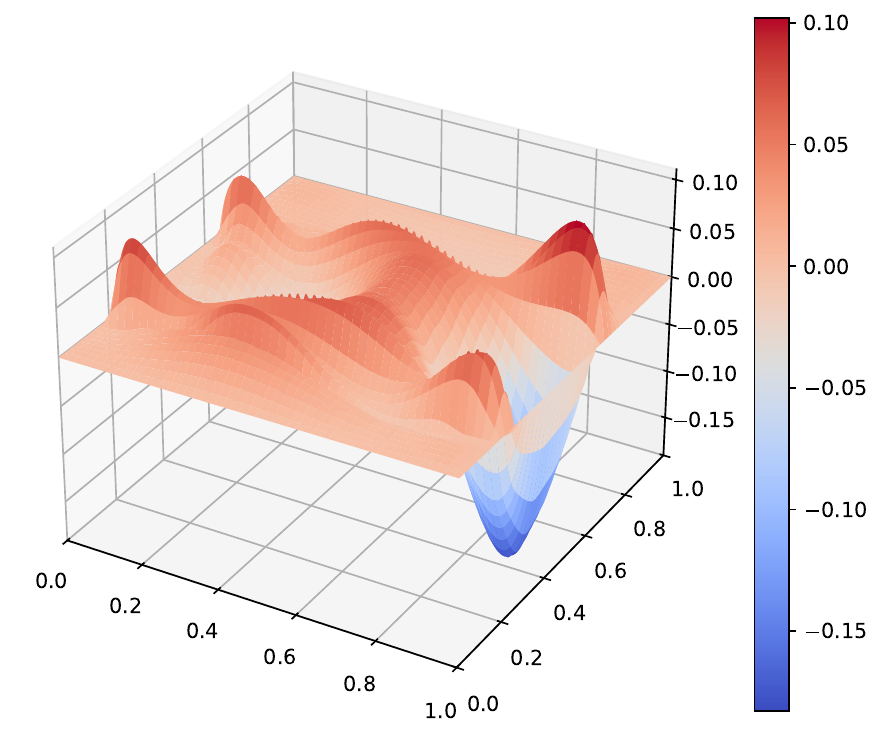}}

    \subfloat[Reference control $\psi^*$]{\includegraphics[width=0.3\textwidth]{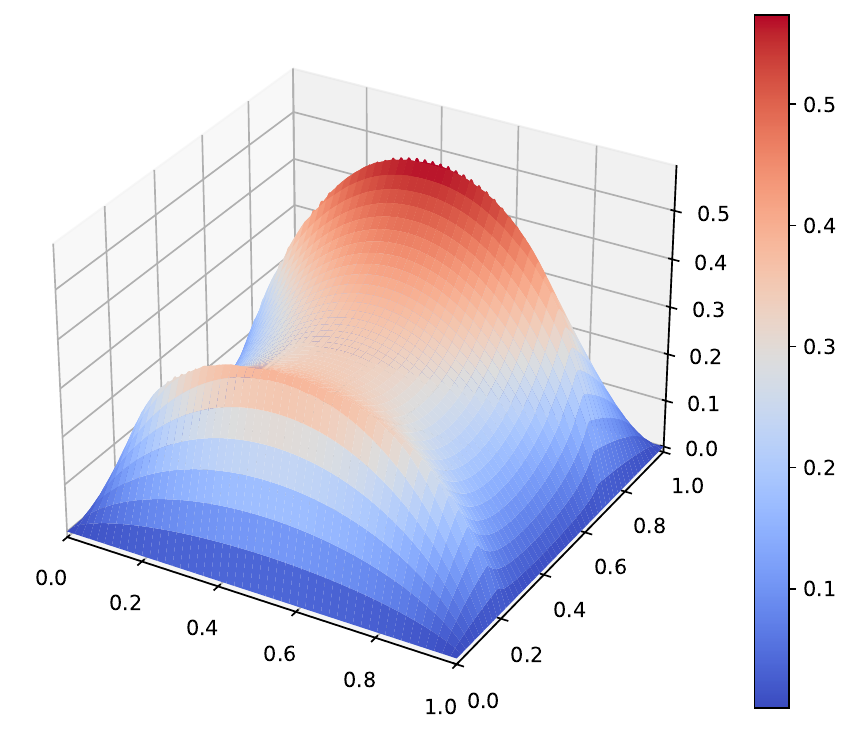}}
    \hfill
    \subfloat[Computed control $\hat \psi$]{\includegraphics[width=0.3\textwidth]{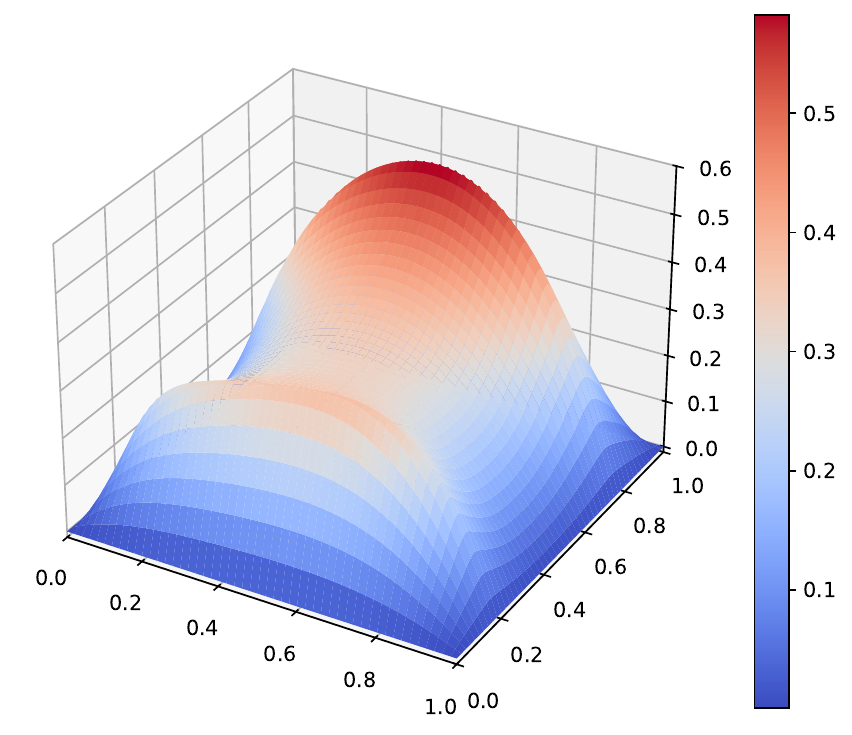}}
    \hfill
    \subfloat[Pointwise error of control]{\includegraphics[width=0.3\textwidth]{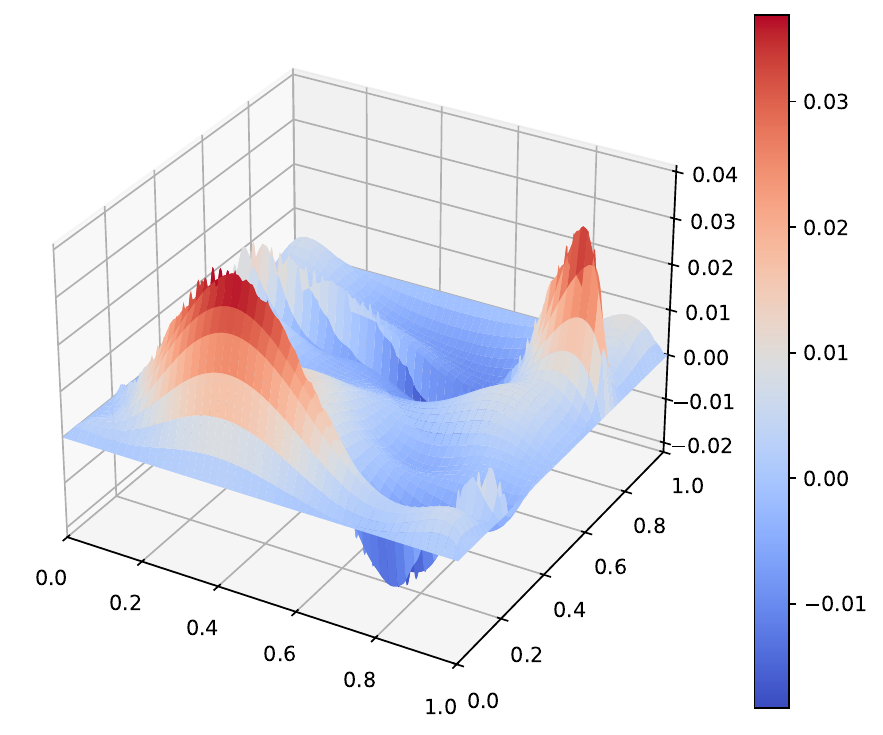}}

\caption{\small \em Numerical results of Algorithm~\ref{alg:Bilevel Deep Learning Method} for {Example 5}}
    \label{fig:meha_6}
\end{figure}

\subsection{Optimal Control of Elliptic Variational Inequalities}
In this section, we showcase that, with slight modifications, Algorithm \ref{alg:Bilevel Deep Learning Method} can be extended to optimal control of  EVIs. To fix ideas, we consider
\begin{equation}\label{eq:control_evi}
	\left\{
	\begin{aligned}
		\min_{y \in H_0^1(\Omega),\,u \in U_{ad}}&J(y, u)
		:=\frac{1}{2}\|y - y_d\|_{L^2(\Omega)}^2 + \frac{\sigma}{2}\|u\|_{L^2(\Omega)}^2 \\
		\text{s.t.}\quad 
		& \langle Ay,\, v-y\rangle \ge \langle f+u,\, v-y\rangle,
		\quad \forall v \in Y_{ad}.
	\end{aligned}
	\right.
\end{equation}
Above, \(A:H_0^1(\Omega)\to H^{-1}(\Omega)\) is a linear elliptic operator, and \(\langle\cdot,\cdot\rangle\) denotes the duality pairing between
\(H_0^1(\Omega)\) and \(H^{-1}(\Omega)\). Other notations are the same as the ones used in \eqref{eq:distribution}. When the operator \(A\) is symmetric, the problem~\eqref{eq:control_evi} can be reformulated in the form of \eqref{general control}. However, this equivalence does not hold for non-symmetric operators \(A\). Nevertheless, Algorithm~\ref{alg:Bilevel Deep Learning Method} remains applicable in such cases.  The key insight lies in that \eqref{eq:control_evi} can also be approximated by a bilevel optimization problem. 

To be concrete, it follows from \cite{gao2025prox} that the EVI in \eqref{eq:control_evi} can be expressed as the fixed-point equation:
$$
P_{Y_{ad}}\left((1-\tau A)y + \tau f + \tau u\right) - y = 0,
$$
where \(\tau > 0\) is a parameter and \(P_{Y_{ad}}\) denotes the projection operator onto \(Y_{ad}\).

By approximating the state \(y\) and the control \(u\) with NNs \(\hat{y}(x;\theta_y)\) and \(\hat{u}(x;\theta_u)\) as described in section~\ref{se: algorithm}, we derive an approximation problem in the form of \eqref{eq: stochastic bi} with the corresponding lower-level loss function given by
\begin{equation}
	e(\theta_y,\theta_u)
	:= \mathbb{E}_{x \sim \mathcal{D}} \left[ \big| P_{Y_{ad}}\left((1-\tau A)\hat{y}(x;\theta_y) + \tau f + \tau \hat{u}(x;\theta_u)\right) - \hat{y}(x;\theta_y) \big|^2 \right].
	\label{eq:lower-level-loss-vi}
\end{equation}
This formulation enables the direct application of Algorithm~\ref{alg:Bilevel Deep Learning Method}.

\begin{remark}
	Building upon the preceding discussion, Algorithm \ref{alg:Bilevel Deep Learning Method} can be generalized to optimal control problems constrained by EVIs—beyond the scope of obstacle problems—by leveraging the results presented in \cite{gao2025prox}. 
\end{remark}

\noindent \textbf{Example 6 (Non-symmetric EVI).} We consider the problem \eqref{eq:control_evi} with a non-symmetric operator \(A\). 
Let \(\Omega=(0,1)^2\), \(\sigma=0.01\), \(Y_{ad}=\{y\in H_0^1(\Omega)\mid y\ge0\text{ a.e.\ in }\Omega\}\), and \(U_{ad}=L^2(\Omega)\). 
Define
\[Ay=-\Delta y+\frac{\partial y}{\partial x_1}-\frac{\partial y}{\partial x_2}, f\left(x_1,  x_2\right)=10\left(\sin \left(2 \pi x_2\right)+x_1\right),~ y_d = x_1(1-x_1)x_2(1-x_2).\]
We set \(\tau=0.01\) and take~\eqref{eq:lower-level-loss-vi} as the lower-level loss. We then implement Algorithm~\ref{alg:Bilevel Deep Learning Method} with $T=20,000, \gamma=200,000$, $c_k=5k^{0.3}$, and \(\alpha=\beta=\eta=2\times10^{-4}\).
Because the lower-level loss \(e\) in \eqref{eq:lower-level-loss-vi} scales with \(\tau\), its magnitude is smaller than the energy-based loss used previously. In practice, to improve the optimization stability, we scale \(e(\theta_y,\theta_u)\) to \(10\,e(\theta_y,\theta_u)\). 

Figure~\ref{fig:meha_evi} illustrates the upper- and lower-level training losses during the bilevel optimization procedure, together with the computed state \(\hat y\) and control \(\hat u\). Both training losses exhibit rapid convergence. We note that the magnitude of the lower-level loss differs from that in the previous examples due to its modified definition. These results demonstrate the strong generalization capability of Algorithm~\ref{alg:Bilevel Deep Learning Method}, as well as its effectiveness and computational efficiency for the optimal control of EVIs.

%
%
%
%
%

\begin{figure}[htpb]
	\centering
	\subfloat[Upper-level loss]{\includegraphics[width=0.45\textwidth]{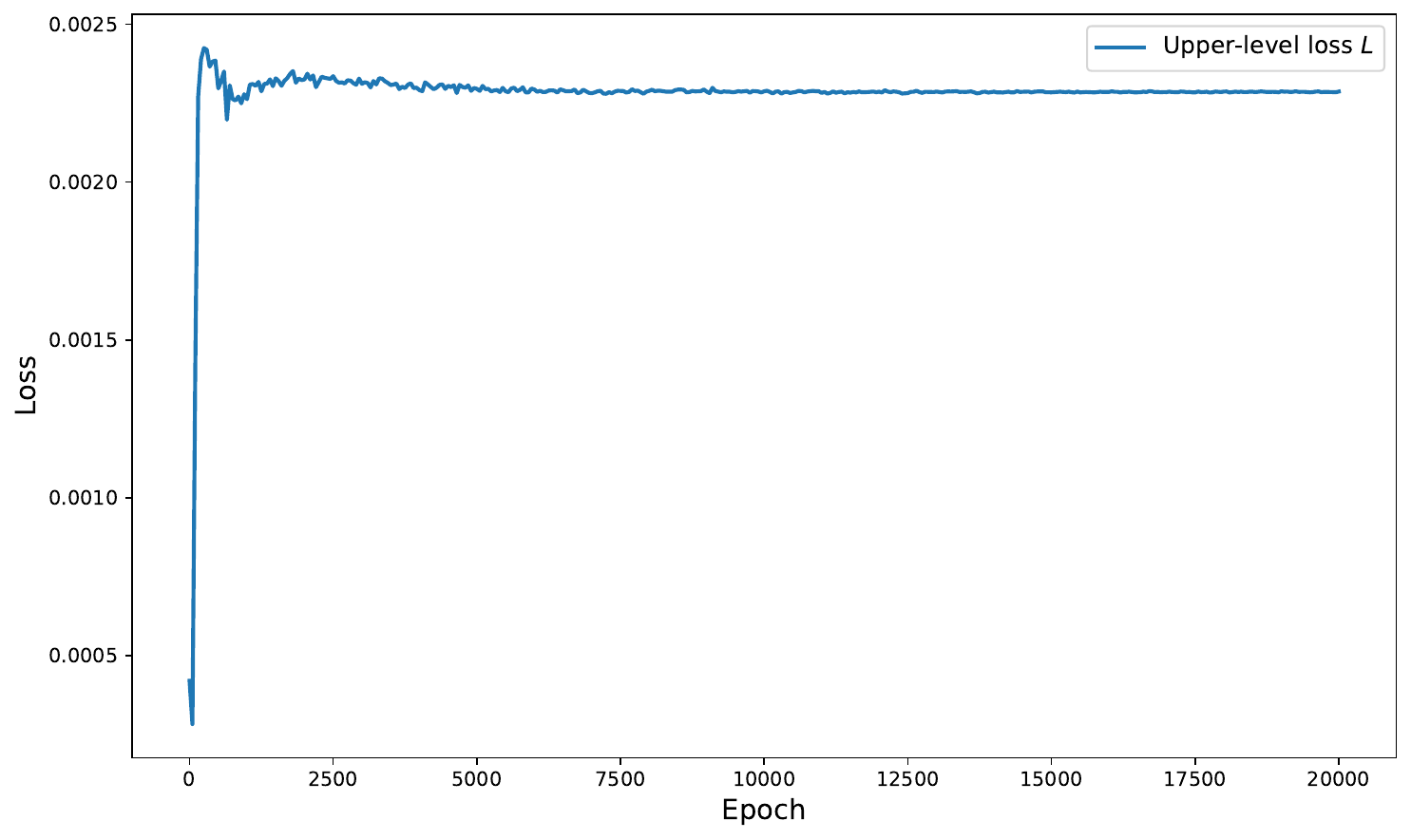}}
	\hfill
	\subfloat[Lower-level loss]{\includegraphics[width=0.45\textwidth]{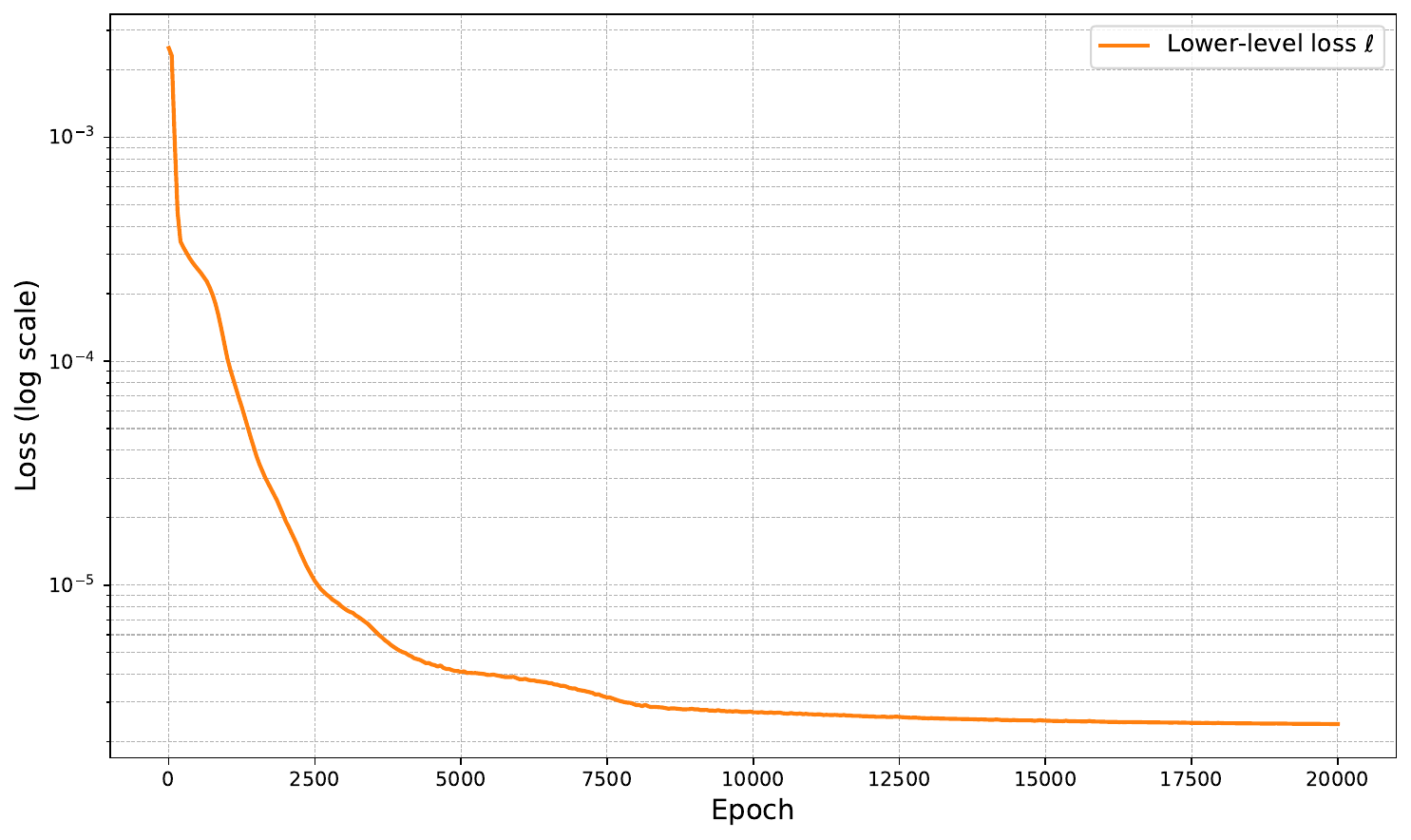}}
	
	\subfloat[Computed state $\hat y$]{\includegraphics[width=0.45\textwidth]{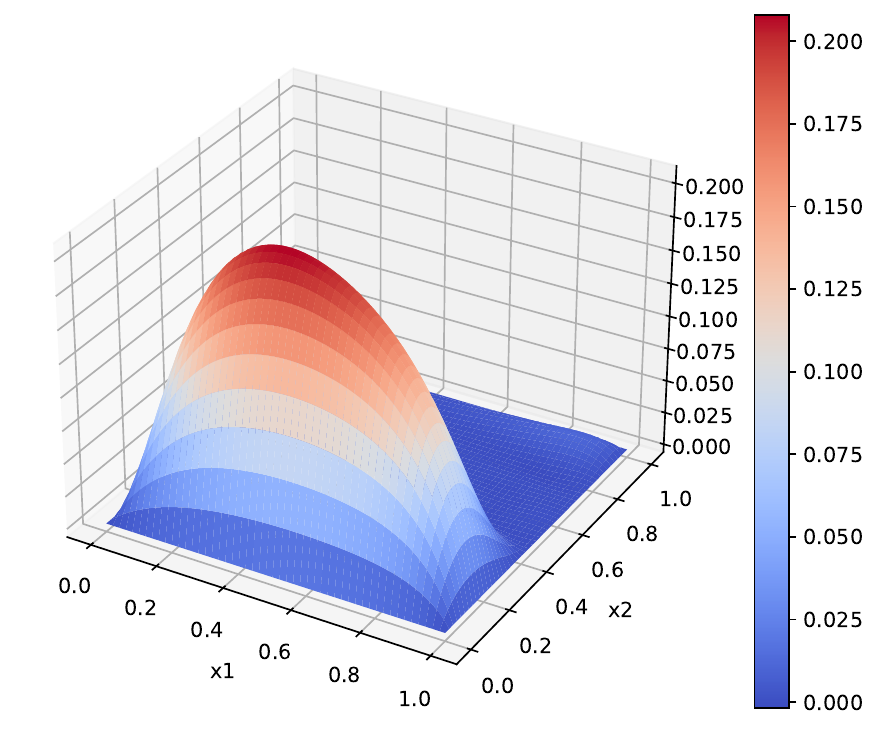}}
	\hfill
	\subfloat[Computed control $\hat u$]{\includegraphics[width=0.45\textwidth]{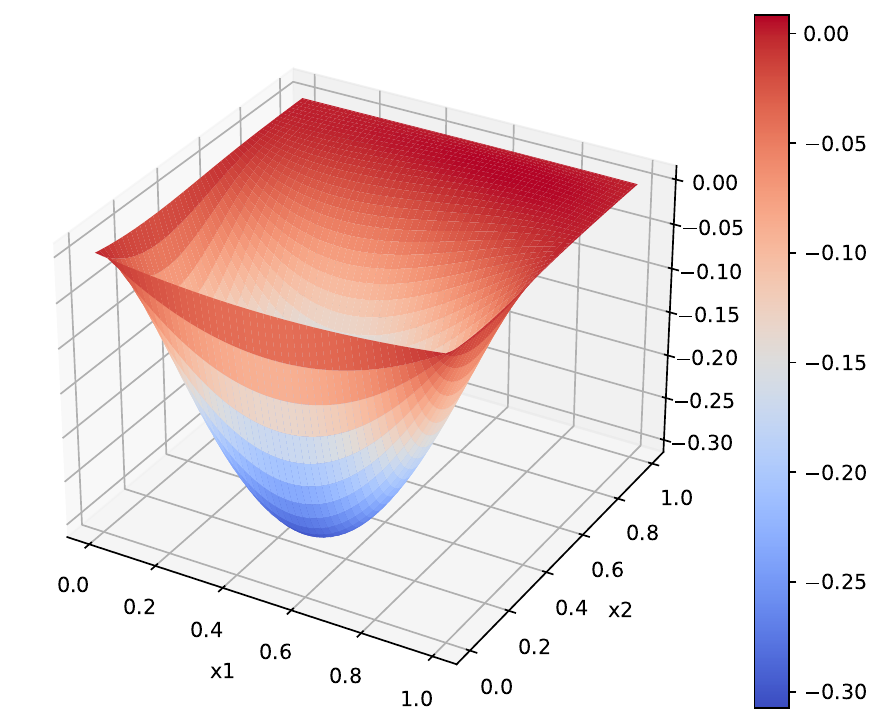}}
	
	\caption{\small \em Numerical results of Algorithm~\ref{alg:Bilevel Deep Learning Method} for {Example 6}}
	\label{fig:meha_evi}
\end{figure}

\newpage

\section{Conclusions and Perspectives}\label{se:conclusion}
In this work, we introduce a bilevel deep learning framework for solving the optimal control of obstacle problems$-$a class of nonsmooth, nonlinear, and hierarchically structured optimization problems that have not been effectively resolved by existing deep learning methods. Our method leverages specially designed neural networks (NNs) to embed constraints directly and approximate the optimal control model by a bilevel optimization problem, thereby avoiding the limitations of objective combination approaches.
The core of our framework is the proposed Single-Loop Stochastic First-Order Bilevel Algorithm (S2-FOBA), which efficiently trains the NNs by solving the bilevel problem in a single-loop fashion. The convergence of S2-FOBA is analyzed under mild assumptions. Our framework and training algorithm explicitly preserve the inherent bilevel structure of the problem. To enhance numerical stability, we further consider a two-stage training strategy that ensures the computed state is sufficiently accurate. 

Extensive numerical experiments demonstrate the effectiveness, flexibility, and robustness of our framework across various settings, including distributed and obstacle controls, regular and irregular obstacles, and complex geometric domains. The framework achieves satisfactory accuracy, outperforms existing deep learning methods, and is more efficient than classical numerical methods on benchmark problems. Overall, this work provides a computationally efficient and scalable learning-based method for optimal control problems and offers a promising direction for mesh-free bilevel optimization methods in scientific computing.

Our work leaves some important questions, which are beyond the scope of the paper and will be the
subject of future investigation. For instance
\begin{itemize}
	\item While S2-FOBA is efficient, its hyperparameter tuning, such as the choice of step sizes and penalty sequences, deserves further automation or theoretical guidance.
	\item Extending the framework to optimal control of time-dependent obstacle problems and stochastic EVIs would broaden its applicability. 
	\item Inspired by the recent success of operator learning in optimal control of PDEs \cite{luo2025efficient,song2023accelerated,song2024operator,wang2021fast}, it is of great interest to design efficient operator learning algorithms for optimal control of obstacle problems and other VIs.
\end{itemize}

\section*{Acknowledgments}

This work is supported by the NTU Start-Up Grant, the National Key R\&D Program of China (2023YFA1011400), the National Natural Science Foundation of China (12326605, 12222106, 12501429), and the Shenzhen Fundamental Research Program (20250530150024003).

\bibliographystyle{elsarticle-num}
\bibliography{references}

\end{document}